\definecolor{indigo}{RGB}{51,0,102}
\definecolor{brightpurple}{RGB}{102,0,153}
\definecolor{fuchsia}{RGB}{180,51,180}
\definecolor{jolightpurple}{RGB}{188,171,240}
\newcommand{\C}{\mathbb{C}}
\newcommand{\Ci}{C^\infty}
\newcommand{\Z}{\mathbb{Z}}
\newcommand{\R}{\mathbb{R}}
\newcommand{\N}{\mathbb{N}}
\newcommand{\Q}{\mathbb{Q}}
\newcommand{\g}{\Gamma}
\newcommand{\A}{\alpha}
\newcommand{\we}{\wedge}
\newcommand{\pa}{\partial}
\newcommand{\pas}{\frac{\partial}{\partial s}}
\newcommand{\pat}{\frac{\partial}{\partial t}}
\newcommand{\ptau}{\frac{\partial}{\partial \tau}}
\newcommand{\om}{\omega}
\newcommand{\M}{\widehat{\mathcal{M}}}
\newcommand{\ds}{\dot{\Sigma}}
\newcommand{\ind}{\mbox{ind}}
\newcommand{\cala}{\mathcal{A}}
\newcommand{\calb}{\mathfrak{B}}
\newcommand{\cale}{\mathcal{E}}
\newcommand{\mhat}{{\mathcal{M}}^{\Jt}}
\newcommand{\calm}{\widehat{\mathcal{M}}^{\Jt}}
\newcommand{\ncal}{\hat{\mathcal{N}}^{\Jt}}
\newcommand{\mbar}{\overline{\mathcal{M}}^{\Jt}}
\newcommand{\calpa}{\mathscr{P}({\A;\baar})}
\newcommand{\baar}{c}
\newcommand{\calc}{u}
\newcommand{\ce}{C_{\epsilon}}
\newcommand{\ee}{\epsilon}
\newcommand{\deebar}{\bar{\pa}_{\Jt}}
\newcommand{\lep}{\lambda_{\varepsilon}}
\newcommand{\czm}{\mu_{CZ}}
\newcommand{\id}{\mathds{1}}
\newcommand{\gax}{x}
\newcommand{\up}{y}
\newcommand{\mult}{\mbox{m}}
\newcommand{\emult}{\mbox{\em m}}
\newcommand{\ga}{\gamma}
\newcommand{\gpk}{\gamma_p^k}
\newcommand{\glp}{\gamma^\ell_+}
\newcommand{\gdm}{\gamma^d_-}
\newcommand{\gp}{{\gamma_+}}
\newcommand{\gm}{{\gamma_-}}
\newcommand{\lo}{\lambda_0}
\newcommand{\vepsilon}{\varepsilon}
\newcommand{\veps}{\varepsilon}
\newcommand{\lens}{L(n+1,n)}
\newcommand{\hopf}{S^1 \hookrightarrow S^3  \overset{h}{\longrightarrow} S^2}
\newcommand{\J}{\mathcal{J}}
\newcommand{\Jt}{{J}}
\newcommand{\sign}{\mbox{sign}}
\newcommand{\curve}{pseudoholomorphic curve}
\newcommand{\curves}{pseudoholomorphic curves}
\newcommand{\murves}{pseudoholomorphic maps}
\newcommand{\murve}{pseudoholomorphic map}
\newtheorem{theorem}{Theorem}
\newtheorem{conj}{Conjecture}
\newtheorem{lemma}[theorem]{Lemma}
\newtheorem{corollary}[theorem]{Corollary}
\newtheorem{proposition}[theorem]{Proposition}
\newtheorem{example}[theorem]{Example}
\newtheorem{remark}[theorem]{Remark}
\newtheorem{definition}[theorem]{Definition}
\numberwithin{theorem}{section}
\numberwithin{equation}{subsection}
\numberwithin{figure}{section}
\title{Automatic transversality in contact homology I: Regularity}
\author{Jo Nelson\footnote{Supported by NSF grant DMS-1303903, the Bell Companies Fellowship and the Fund
for Mathematics at the Institute for Advanced Study.}}
\date{}
\begin{document}
\maketitle 
 \begin{abstract}
This paper helps to clarify the status of cylindrical contact homology, a conjectured contact invariant introduced by Eliashberg, Givental, and Hofer in 2000. We explain  how heuristic arguments fail to yield a well-defined homological invariant in the presence of multiply covered curves.  We then introduce a large subclass of dynamically convex contact forms in dimension 3, termed \emph{dynamically separated}, and demonstrate automatic transversality holds, therby allowing us to define the desired chain complex.  The Reeb orbits of dynamically separated contact forms satisfy a uniform growth condition on their Conley-Zehnder index under iteration, typically up to large action; see Definition \ref{taut}.  These contact forms arise naturally as perturbations of Morse-Bott contact forms such as those associated to $S^1$-bundles.  In subsequent work, we give a direct proof of invariance for this subclass and, when further proportionality holds between the index and action, powerful geometric computations in a wide variety of examples.

   \end{abstract}

\tableofcontents

\section{Motivation and results}\label{intro}
An impressionistic outline of what contact homology and other related invariants should be if all the analytic difficulties could be resolved was given in \cite{EGH}.  For many years the severity of transversality problems were not completely understood in cylindrical contact homology; see \cite{BC, BCE, BOnotrans, CH, CHugh, vk, P09, ML}.  In previous literature \cite{B02, B09, U}, it was stated that when transversality could not be achieved by perturbing the almost complex structure that the difficulty could still be resolved via a \emph{delicate} virtual cycle technique involving multivalued perturbations.  However, full details were never given and recent literature by  \cite{CLW, confute, IP, MW, Pa, TZ} suggests that this procedure is even more delicate than previously indicated.

 Standard methods of perturbing the almost complex structure, $J$, as in \cite{FHS} fail to achieve transversality for the moduli spaces involved in defining a chain complex for contact homology.  Breaking the underlying $S^1$-symmetry by making use of a $S^1$-dependent $J$ is not appropriate for contact homology as the usual gluing arguments fail and as a result, one can no longer prove $\pa^2=0$ and the chain homotopy equations \cite{HN2}.  Fixing the failure of gluing leads to the construction of a non-equivariant Morse-Bott theory analogous to \cite{BOduke}, which is not isomorphic to the desired theory. 
 
An $S^1$-invariant extension of time-dependent almost complex structures and Hamiltonians are used in \cite{BOSHCH} to define an $S^1$-equivariant version of symplectic homology which should agree with cylindrical contact homology when the latter is defined, as in this paper.  The forthcoming papers, \cite{HN2, HN3}, make use of an analogous construction in contact homology as well as obstruction bundle gluing \cite{HT2} to obtain invariance of cylindrical contact homology under choices of nondegenerate dynamically convex contact forms and $J$.  Depending on the desired application, one theory is often better suited for computational problems at hand, so it is important to understand the interplay between these theories as well as in what generality they can be defined.

The polyfold theory developed by Hofer, Wysocki, and Zehnder \cite{HWZsc, HWZsec, HWZgw} will ultimately resolve these severe transversality issues via a completely abstract functional-analytic framework.  However, the work of recasting the moduli problems involved in defining contact homology and formulating an appropriate scheme of abstract perturbations into the formal language of polyfolds is still pending. Moreover, the geometric constructions of this paper benefit the polyfold framework for contact homology, providing insights into concrete computations and applications.

In this paper, we give a rigorous construction of cylindrical contact homology for contact forms in dimension 3 whose Reeb orbits satisfy a uniform growth condition on their Conley-Zehnder indices.  We call such contact forms dynamically separated and give a precise definition in Definition \ref{taut} and numerous examples.   Further details on invariance and computational methods for the class of dynamically separated contact forms will appear in the paper \cite{jocompute}.  These invariance results are obtained more directly than those to appear in \cite{HN2, HN3} for the class of dynamically convex contact forms.


\begin{remark} [Relationship to the dynamically convex case] \em
The calculations of Section \ref{numerology} also appear in \cite{HNdyn}.  In \cite{HNdyn} we make use of intersection theory to exclude the branch covers of trivial cylinders appearing in Lemma \ref{nontrivialtentacles}, the remaining obstruction to defining cylindrical contact homology for dynamically convex contact 3-manifolds.  However, this intersection theory currently relies on certain technical assumptions, which hold when $\pi_1(M)$ is not torsion;  see (*) in Theorem 1.3 and Remark 1.4 of \cite{HNdyn}.  We expect these assumptions to be removable pending further study of the asymptotics of pseudoholomorphic curves and in the meantime, the dynamically separated case allows us to get around the technical assumptions of (*) in many situations. 
\end{remark}

\noindent \textbf{Organization of the article}.  
The rest of Section \ref{intro} gives a comprehensive overview of cylindrical contact homology, a statement of the regularity results obtained in this paper, and a discussion  of dynamically separated contact forms. The basics of pseudoholomorphic curves and the moduli spaces of interest are provided in Section \ref{freddie}.  Issues of transversality for multiply covered asymptotically cylindrical pseudoholomorphic curves and their branched covers in symplectizations are discussed in Section \ref{transversality}, with the proofs of the necessary regularity results and main theorems appearing in Section \ref{regularity}.

\subsection{Two equivalent cylindrical contact chain complexes}\label{attempt}
For the uninitiated we begin with a brief recollection of how one aims to construct contact homology.   Let $(M, \xi)$ be a co-oriented contact manifold of dimension $2n-1$ and let $\A$ be a nondegenerate\footnote{Nondegeneracy of the contact form means that all the Reeb orbits associated to the Reeb vector field $R_\A$ are nondegenerate; see below.} contact form such that ker $\A = \xi$.  The contact form $\A$ uniquely determines the Reeb vector field $R_\A$ by 
\[
\iota(R_\A)d\A=0, \ \ \ \A(R_\A)=1.   
\]
A Reeb orbit $\ga$ of period $T$ with $T>0$, associated to $R_\A$ is defined to be a map
\[
\ga: \R / T\Z \to M
\]
satisfying
\[
\begin{array}{rll}
\dot{\ga}(t):=\dfrac{d\ga}{dt} &=& R_\A(\ga(t)), \\
\ga(0)&=& \ga(T) \\
\end{array}
\]

Two Reeb orbits are considered equivalent if they differ by reparametrization, i.e. precomposition with any translation of $\R / T \Z$ corresponding to the choice of a starting point for the orbit. A Reeb orbit is said to be \textbf{simple}\footnote{Simple is synonymous with embedded in other literature.} whenever the map $\ga: \R / T\Z \to M$ is injective.   If $\ga\colon \R/T\Z \to M$ is a simple Reeb orbit of period $T$ and $k$ a positive integer, then we denote $\ga^k$ to be the \textbf{$k$-fold cover} or iterate of $\ga$, meaning $\ga^k$ is the composition of $\ga$ with $\R/kT\Z \to \R / T\Z$ and has period $kT$.   Denote the the Reeb flow by $\varphi_t$, i.e. $ \dot{\varphi}_t = R_\A(\varphi_t).$  A Reeb orbit is said to be \textbf{nondegenerate} whenever the linearized return map of the flow along $\gamma$,
\[ 
d\varphi_T: (\xi_{\ga(0)}, d\A) \to (\xi_{\ga(T)=\ga(0)},d\A)
\]
has no eigenvalue equal to 1. 

The linearized flow of a $T$-periodic Reeb orbit $\ga$ yields a path of symplectic matrices given by
\[
d\varphi_t:\xi_{\gamma(0)} \to \xi_{\gamma(t)}, \ t\in[0,T].
\]
One can compute the Conley-Zehnder index of $d\varphi_t, \ t\in[0,T],$ however this index is typically dependent on the choice of trivialization $\Phi$ of $\xi$ along $\ga$ used in linearizing the Reeb flow. However, if $c_1(\xi;\Z)=0$ we can use the existence of an (almost) complex volume form on the \textbf{symplectization}\footnote{The \textbf{symplectization} of $(M,\A)$ is given by the manifold  $\R \times M$ and symplectic form
\[
\omega = e^\tau(d\A - \A \wedge d\tau) = d (e^\tau\alpha).
\]
Here $\tau$ is the coordinate on $\R$ and $\A$ is understood to be the 1-form on $\R \times M$, obtained via pullback under the projection $\R \times M \to M$ and $\Jt$ is an \textbf{$\A$-compatible} almost complex structure; see Definition \ref{complexstruc}.} $(\R \times M, d(e^\tau \A), \Jt)$ to obtain a global means of linearizing the flow of the Reeb vector field, as follows.

\subsubsection{Grading}
 For any $\A$-compatible $J$, the symplectic vector bundle $(\xi, d\A, J)$ has a natural $U(n-1)$ structure.  Since this bundle is a (almost) complex bundle, we can take its highest exterior power, which we denote by $\mathcal{K}^*$ called the \textbf{anticanonical bundle} of $M$.  The dual of such  bundle is called the \textbf{canonical bundle}.   If $c_1(\xi; \Z)=0\in H^2(M;\Z)$  then one can trivialize the anticanonical bundle  $\mathcal{K}^*$.  Let 
 \[
 \widetilde{\Phi}: \mathcal{K}^* \to TM \times \C
 \]
 be a choice of such a trivialization.   Note that this amounts to specifying a global complex volume form on $\R \times M$. If $H^1(M;\Q) =0$ then $\widetilde{\Phi}$ (as well as any complex volume form) is unique up to homotopy.  Now we can insist than any local trivialization $\Phi$ of $\xi$, which can be used to linearize the Reeb flow along $\gamma$ must agree with our ``canonically" determined trivialization $\widetilde{\Phi}$.  This gives rise to an absolute $\Z$-grading on the Reeb orbits.  

In this case one can sensibly refer to the Conley-Zehnder index of a Reeb orbit $\ga$, obtaining a $\Z$-grading on the Reeb orbits given by
 \begin{equation}\label{sftgrading}
 |\gamma|=\czm^\Phi(\gamma)+n-3.
 \end{equation}
 Here $\czm^\Phi(\gamma):=\czm(d\varphi_t)\arrowvert_{t\in[0,T]}$ is the Conley-Zehnder index of the path of symplectic matrices obtained from the linearization of the flow along $\ga$, restricted to $\xi$. We note that if $c_1(\xi;\Z)$ only vanishes on $\pi_2(M)$ then on the homotopy class of contractible loops there is a $\Z$-grading which may or may not be the same as the one obtained through the choice of a complex volume form.
 
If $c_1(\xi; \Z)=0$ and  $H^1(M;\Z)\neq0$ then there is more than one homotopy class of trivializations associated to the complex line bundle that is the canonical representation of $-c_1(\xi)$, resulting in different choices of complex volume forms on  $(\R \times M, d(e^\tau \A), \Jt)$.  If $c_1(\xi; \Q)=0$ one can obtain a fractional $\Z$-grading, see  \cite[\S 3.1]{mclean} \cite{sgrade, biased}.   
 
 It is important to note that our trivializations are fixed up to homotopy; that is trivializations over iterated orbits must be homotopic to the iterated trivializations.  When the trivialization $\widetilde{\Phi}$ is available globally as when $c_1(\xi; \Z)=0$ this is straightforward, otherwise care must be taken in specifying local trivializations.  This point will be addressed further in Section \ref{arbitrary}.
  
When a $\Z$-grading is unavailable, there is a canonical $\Z_2$-grading due to the axiomatic properties of the Conley-Zehnder index \cite{RS1,SZ}.  For $(M^{2n-1},\xi)$ this grading is obtained via 
\begin{equation}\label{Z2grading}
(-1)^{\czm(\gamma)}=(-1)^{n-1}\sign \det (\id - \Psi(T)),
\end{equation}
where $\Psi(t)_{t \in [0,T]} \in \mbox{Sp}(2n-2)$ is the linearized flow restricted to $\xi$ along a $T$-periodic Reeb orbit $\gamma$ with respect to the choice of symplectic trivialization $\Phi$ of $\xi$.  

In dimension 3, one can classify a nondegenerate Reeb orbit $\gamma$ as being one of three types, depending on the eigenvalues $\lambda$, $\lambda^{-1}$ of the linearized flow return map along $\gamma$ restricted to $\xi$.
\begin{itemize}
\item[] $\gamma$ is elliptic if $\lambda, \lambda^{-1} := e^{\pm2\pi i \theta}$;
\item[] $\gamma$ is positive hyperbolic if  $\lambda, \lambda^{-1} > 0$; 
\item[] $\gamma$ is negative hyperbolic if $\lambda, \lambda^{-1} < 0$.
\end{itemize}
 The parity of the Conley-Zehnder index does not depend on the choice of trivialization and is even when $\gamma$ is positive hyperbolic and odd otherwise, yielding the canonical $\Z_2$ grading in dimension 3. 

 
We will further need to classify Reeb orbits whose Conley-Zehnder index changes parity under iteration, a phenomenon which is always independent of the choice of trivialization.  Such Reeb orbits are said to be \textbf{bad Reeb orbits} and must be excluded from the chain group due to issues involving orientation and invariance.  More details will be given on bad Reeb orbits, including their exclusion from the chain complex in Remarks \ref{goodbad1}, \ref{goodbad2}, and \ref{goodbad}.

\begin{definition}\em
The $m$-fold closed Reeb orbit $\ga^m$ is \textbf{bad} if it is the $m$-fold covering of some simple Reeb orbit $\ga$ such that the difference $\czm(\ga^{m}) - \czm(\ga)$ of their Conley-Zehnder indices is odd.    
 \end{definition}
In dimension 3 the set of bad orbits consists solely of the even coverings of simple negative hyperbolic orbits, as worked out in \cite{Ylong}.  In higher dimensions one can consult \cite{U} to see that bad orbits can only arise from even multiple covers of nondegenerate simple orbits whose linearized return flow has an odd number of pairs of negative real eigenvalues $(\lambda, \lambda^{-1})$.  If a Reeb orbit is not bad then it is a \textbf{good Reeb orbit}.  The set of all Reeb orbits in the free homotopy class $c$ of $R_\A$ is denoted by $\calpa$, and the set of  good Reeb orbits of $R_\A$ in a free homotopy class $c$ is denoted by $\mathscr{P}_{\mbox{\tiny good}}(\alpha;c)$.
 
The \textbf{chain group} $C_*(M,\A)$ is generated by all nondegenerate closed \textbf{good} Reeb orbits of  $R_\A$ over $\Q$-coefficients, with grading determined by (\ref{sftgrading}).  For a more detailed discussion on other choices of coefficients see Remark \ref{coefficients}.     The chain group splits over the free homotopy classes $\baar \in \pi_0(\Omega M)$ of Reeb orbits, 
 \[
 C_*(M,\A)= \bigoplus_{\baar \in \pi_0(\Omega M)} C^{\baar}_*(M,\A).
 \]
  The reason for this splitting is because the \textbf{differentials} $\pa_-$ and $\pa_+$, defined in (\ref{pa1}) and (\ref{pa2}) respectively, are a weighted count of rigid pseudoholomorphic cylinders interpolating between two closed Reeb orbits, defined as follows.

\subsubsection{Differentials}
In order to precisely define a differential $\pa$ we must define the notion of a pseudoholomorphic curve in a symplectization, which involves the choice of an \textbf{$\A$-compatible} almost complex structure $\Jt$ on $(\R \times M, d(e^\tau\A))$, which is $\R$-invariant and determined by any $J$ compatible with $d\A$ on $\xi$.  

A \textbf{finite energy pseudoholomorphic map} with one positive puncture and one negative puncture
\[
u(s,t):=(a(s,t),f(s,t)): (\R \times S^1, j_0) \to (\R \times M, \Jt)
\]
is a solution of the Cauchy-Riemann equation, 
\[
\bar{\pa}_{j,J}u:= du + J \circ du \circ j \equiv 0,
\]
and asymptotic to parametrized nondegenerate Reeb orbits $\gp$ and $\gm$ of periods $T_+,$ and $T_- $ respectively, meaning that
 \begin{equation}\label{cylasymptotics}
\begin{array}{rcll}
\displaystyle \lim_{s \to \pm \infty} f(s,t) &=& \ga_\pm(T_\pm t) & \mbox{ in } C^\infty(M), \\
\displaystyle \lim_{s \to \pm \infty} a(s,t) &=& \pm \infty & \mbox{ in } C^\infty(\R).\\
\end{array}
\end{equation}

We declare two maps to be equivalent if they differ by translation and rotation of the domain $\R \times S^1$. We call such an equivalence class a \textbf{finite energy pseudoholomorphic cylinders} with one positive and one negative puncture, denoted by $\widehat{\mathcal{M}}(\gp;\gm)$.  The moduli space of (unparametrized) \textbf{rigid cylinders} that $\pa$ counts is
\[
\mhat_0(\gp;\gm):=\calm_1(\gp;\gm)/\R,
\]
where $\czm(\gp)-\czm(\gm)=1$.  The $\R$-action we mod out by is given by vertical translation on $(\R \times M, \Jt)$.   In this paper the only finite energy cylinders we care about  are those with one positive and one negative puncture; henceforth all mention of finite energy cylinders refers to those with one positive and one negative puncture.

While ultimately we are only interested in counting rigid cylinders, we still need to consider moduli spaces of genus 0 curves with one positive and an arbitrary number of negative ends asymptotic to Reeb orbits subject to the finite energy condition of \cite{HWZ1}-\cite{HWZ4}; see Section \ref{area-energy} and \ref{asymptotics} for a definition of finite energy and its implications.  We will need to show that the compactification of $\mhat_1(\gp;\gm)$ does not contain buildings with levels given by noncylindrical curves in the sense of \cite{BEHWZ}. This paper  demonstrates their exclusion via geometric reasons, see Section \ref{quandaries} for an overview of these issues.

These noncylindrical curves are examples of \textbf{asymptotically cylindrical curves}, also known as \textbf{finite energy curves}, described as follows.  Let $(\Sigma, j)$  be a closed Riemann surface and $\Gamma$ be a set of points which are the punctures of $\dot{\Sigma}:=\Sigma \setminus \Gamma$.   Asymptotically cylindrical maps are pseudoholomorphic maps 
\[
u: (\dot{\Sigma}, j) \to (\R \times M, J),
\]
subject to the asymptotic conditions involving $\#|\Gamma|$ Reeb orbits defined momentarily in (\ref{ass1}) and (\ref{ass2}).  

 The moduli space of asymptotically cylindrical curves is the space of equivalence classes of asymptotically cylindrical pseudoholomorphic maps; here an equivalence class is defined by the data $(\Sigma, j, \Gamma, u)$, where $\Gamma$ is an ordered set.  This equivalence is defined in terms of $j$ and the punctures in the usual manner,\footnote{Note that when considering genus 0 domains, we can alternatively fix a standard $j_0$ on $\Sigma:=S^2$ and only keep track of the location of the punctures.} with full details given st the end of Section \ref{pseudsymp}.  

The domain of all the curves of interest in this paper is a multiply punctured sphere $(\dot{\Sigma}, j):=(S^2\setminus \{x,y_1,...,y_{s}\}, j_0)$.  We denote the moduli space of genus 0 \textbf{asymptotically cylindrical pseudoholomorphic} curves with 1 positive end and $s$ negative ends limiting on the Reeb orbits $\gp, \ga_1,...,\ga_s$ by
\[
\mhat(\gp;\ga_1,...,\ga_s):=\calm(\gp;\ga_1,...,\ga_s)/\R
\]
The elements of $\calm(\gp;\ga_1,...,\ga_s)$ are equivalence classes of pseudoholomorphic maps
\[
\begin{array}{rll}
u =(a, f)&:&(S^2\setminus \{x, y_1,...,y_s\}, j_0) \to (\R \times M, \Jt) \\
\bar{\pa}_{j,J}u & :=& du + J \circ du \circ j \equiv 0.\\
\end{array}
\]
subject to (\ref{ass1}) and (\ref{ass2}).  After partitioning the punctures into positive and negative subsets $\g_+:= \{x\}$ and $\g_-:=\{y_1,..y_s\} $ respectively, with $\g = \g_+ \sqcup \g_-$,  we choose a biholomorphic identification of a punctured neighborhood of each $z \in \g_\pm$ with a positive or negative half-cylinder respectively, 
\[
Z_+=[0, \infty) \times S^1, \ \ \ \ \ \ Z_-=(-\infty, 0] \times S^1,
\] 
and choose cylindrical coordinates $(s,t)$ for $u$ near the puncture.  Then for $|s|$ sufficiently large, we require that the following asymptotic formula be satisfied,
\begin{equation}\label{ass1}
u \circ \phi(s,t) = \exp_{(Ts,\ga(Tt))}h(s,t) \in E_\pm.
\end{equation}\label{ass2}
Here $(E_-,J) \cong ((-\infty, 0] \times M, \Jt)$ and   $(E_+,J) \cong ([0,\infty) \times M, \Jt)$

As before, $T >0$ is a constant, $\ga : \R \to M$ is a $T$-periodic Reeb orbit of $R_\A$, and the exponential map is defined with respect to any $\R$-invariant metric on $\R \times M$, $h(s,t) \in \xi|_{\ga(Tt)}$ goes to 0 uniformly in $t$ as $s \to \pm \infty$, and $\phi: Z_\pm \to Z_\pm$ is a smooth embedding such that  
\begin{equation}\label{ass2}
\phi(s,t) - (s+s_0, t+t_0) \to 0 \mbox{ as } s \to \pm \infty
\end{equation}
for some constants $s_0 \in \R$ and $t_0 \in S^1$.  

These asymptotics provide a suitable system of weighted Sobolev spaces used in the study of the linearization of the $\bar{\pa}_{j,J}$-operator, described in \cite{Dr, HWZ3, schwarz}.  This allows us to conclude that the virtual dimension of $\calm(\gamma_+;\gamma_1,...,\gamma_s)$ is given by
\begin{equation}\label{vdimintro}
\mbox{ vdim }\calm(\gamma;\gamma_1,...,\gamma_s)  = (n-3)(1-s) +  \czm\left(\gamma\right) -\displaystyle \sum_{i=1}^s\czm(\gamma_i).
\end{equation}
Note $\mbox{ vdim }\mhat(\gamma;\gamma_1,...,\gamma_s)=\mbox{ vdim }\calm(\gamma;\gamma_1,...,\gamma_s)  -1$. We denote 
\begin{equation}\label{mjd}
\mathcal{M}_d^J(\gp;\gm) := \{ u \in \mhat(\gp;\gm) \ | \ \ind(u) := \mbox{ vdim }\mhat(\gp;\gm) =d \}.  
\end{equation}

We also are interested in genus 0 \textbf{finite energy planes\footnote{Note that $(S^2 \setminus \{x\}, j_0)$ is biholomorphic to $(\C, j_0)$, hence the terminology plane.}}, which are pseudoholomorphic curves
\[
u: (S^2 \setminus \{x\}, j_0) \to (\R \times M, \Jt)
\]
asymptotically cylindrical to a single nondegenerate Reeb orbit $\ga$ at the puncture $x$.  Due to the maximum principle, the puncture of a finite energy plane is always positive. 
We write $u \in \calm(\gamma; \emptyset)$ and the above virtual dimension formula (\ref{vdimintro}) holds, yielding $\mbox{vdim }\calm(\gamma; \emptyset) = (n-3) + \czm\left(\gamma\right) = |\ga|.$  
 
Since we allow $x$ and $y$ to be multiply covered Reeb orbits, elements of $\mhat(x;y)$ will typically no longer be \textbf{somewhere injective.}  The non-specialist will find a definition of somewhere injective and multiply covered curves in Section \ref{modulispaces}.  In fact, we must take the \textbf{multiplicities} $\mult$ of the Reeb orbits and elements of $\mhat(x;y)$ into account in the following expression for $\pa$ to ensure that $\pa^2=0$, given the (expected) geometric structure of the compactified moduli space $\mbar(x;z)$; see Theorem \ref{gluing}.

\begin{definition}[Multiplicities of orbits and curves]\label{multiplicities}\em
If $\tilde{\ga}$ is a closed Reeb orbit, which is a $k$-fold cover of a simple orbit $\ga$, then the \textbf{multiplicity of the Reeb orbit} $\tilde{\ga}$ is defined to be
\[
\mult(\tilde{\ga})=k,
\]
with $\mult(\ga)=1$.  The \textbf{multiplicity of a pseudoholomorphic curve} is 1 if it is somewhere injective.  If the pseudoholomorphic curve $\calc$ is multiply covered then it factors through a somewhere injective curve $v$ and a holomorphic covering $\varphi: (\R \times S^1,j_0) \to (\R \times S^1, j_0),$ e.g. $\calc = v \circ \varphi$. The multiplicity of $\calc$ is defined to be
\[
\mult(\calc):=\mbox{deg}(\varphi).
\]
 If $\calc \in \mhat(x;y)$ then $\mult(\calc)$ divides both $\mult(x)$ and $\mult(y)$.  
\end{definition}

Recall we denoted the set of good Reeb orbits of $R_\A$ in the same free homotopy class $c$ by $\mathscr{P}_{\mbox{\tiny good}}(\alpha;c)$.  We define the operators 
\[
\begin{array}{ccll}
\kappa: & C_*^{\baar}(M,\A, J) &\to& C_{*}^{\baar}(M,\A, J)\\
&x& \mapsto &\mult(x)x \\
\end{array}
\]
and
\[
\begin{array}{ccll}
\delta: & C_*^{\baar}(M,\A, J) &\to& C_{*-1}^{\baar}(M,\A, J)\\
&x& \mapsto &\displaystyle \sum_{\substack{ y \in \mathscr{P}_{\mbox{\tiny good}}(\alpha;c) \\ u \in \mathcal{M}^J_0(x; y)}}\dfrac{\epsilon(u)}{\mult(u)}y, \\
\end{array}
\]
which yield two ways to define the \textbf{differential}\footnote{The different ways of defining differential is rather ambiguous in older literature.} as one can encode the multiplicity of either the top or the bottom Reeb orbit. The expressions for the differentials with respect to the splitting of the chain complex over free homotopy classes $\baar$ of Reeb orbits are given respectively by
\begin{equation}\label{pa1}
\begin{array}{ccll}
\pa_{-}:=\kappa \circ \delta \ \colon & C_*^{\baar}(M,\A, J) &\to& C_{*-1}^{\baar}(M,\A, J) \\
&\gax &\mapsto& \displaystyle \sum_{\substack{y \in \mathscr{P}_{\mbox{\tiny good}}(\alpha;c) \\ u \in \mathcal{M}^J_0(x; y)}}\left( \epsilon(u)\frac{\mult({y})}{\mult(u)}\right) \up \\
\end{array}
\end{equation}
and
\begin{equation}\label{pa2}
\begin{array}{ccll}
\pa_{+}:= \delta \circ \kappa \ \colon & C_*^{\baar}(M,\A, J) &\to& C_{*-1}^{\baar}(M,\A, J) \\
&\gax &\mapsto& \displaystyle \sum_{\substack{y \in \mathscr{P}_{\mbox{\tiny good}}(\alpha;c) \\ u \in \mathcal{M}^J_0(x; y)}} \left( \epsilon(\calc)\frac{\mult({x})}{\mult(u)}\right) \up, \\
\end{array}
\end{equation}
where $\epsilon(\calc) = \pm1$ depends on a choice of \textbf{coherent orientations.}    Coherent orientations for symplectic field theory can be found in \cite{BM} as adapted from Floer theory in \cite{FH}, with additional exposition in Section 9 of \cite{HT2}. A different choice of coherent orientations will lead to different signs in the differential, but the chain complexes will be canonically isomorphic.   

\begin{remark}[Existence of Orientations]\label{goodbad1}\em
When the moduli space $\mathcal{M}^{J}_0(x;y)$ of index 1 cylinders is a 0-manifold, then it can only be oriented by a choice of coherent orientations as in \cite{BM}, provided both $x$ and $y$ are good orbits.  
\end{remark}

\begin{remark}[Choices of coefficients]\label{coefficients}\em
The homologies, $H_*(C_*(M, \A, J), \pa_\pm)$ are equivalent over $\Q$-coefficients, provided sufficient transversality holds to define the chain complexes and obtain invariance in the first place.  The isomorphism between these two chain complexes is then given by $\kappa$ because $(\kappa \delta) \kappa = \kappa (\delta \kappa).$  

While one can always define either differential for cylindrical contact homology over $\Z_2$ or $\Z$-coefficients because the weighted expression is always integral, it is expected that the homologies $H_*(C_*(M, \A, J), \pa_\pm)$ are not typically isomorphic, \cite{HN2, HN3}.  In the case of dynamically separated contact forms $\A$ we have $\pa_- \equiv \pa_+$ because for any $u \in \mhat_0(x;y)$, $\mult(x)=\mult(y)$.  In this case the contact homologies are trivially isomorphic over $\Z_2$ and $\Z$-coefficients.
\end{remark}

\begin{remark}[Well-definedness of the differentials]\em
In order to ensure that both of the expressions (\ref{pa1}) and (\ref{pa2}) are meaningful, i.e. that the counts of curves are finite, one must have proven that all moduli spaces of relevance can be cut out transversally.  When there are multiply covered curves to contend with, there are presently no geometric methods to conclude in higher dimensions that $(C_*,\pa_\pm)$ forms a chain complex or $H(C_*(M, \A, \Jt), \pa_\pm)$ is an invariant of $\ker \A$.  Specific details in regards to these issues are given in Section \ref{quandaries}.
\end{remark}

One must exclude bad Reeb orbits from the chain complex as their inclusion obstructs the proof of invariance, assuming sufficient transversality existed in the first place.

\begin{remark}[Apriori exclusion of bad Reeb orbits]\label{goodbad2}\em
The period doubling example explained in \cite[\S 9]{HN2} demonstrates that one cannot define an invariant version of contact homology which is the homology of a chain complex generated by all (good or bad) Reeb orbits.  If we restrict locally to those orbits that wind twice times around a neighborhood, such a chain complex would have 1 generator before the period-doubling bifurcation and 2 generators after the bifurcation, so its Euler characteristic would not be invariant.
 \end{remark}

The original conjecture from \cite{EGH} is as follows.

\begin{conj}\label{EGH1}
Let $(M^{2n-1}, \xi)$ be a co-oriented\footnote{Co-oriented means that there exists a globally defined $\A \in \Omega^1(M)$ such that $\xi = \ker \A$.} contact manifold. Further assume that all closed orbits of the Reeb vector field associated to $\A$ are nondegenerate and that there are no contractible orbits of grading $|\gamma|=-1,0,1.$ Then for every free homotopy class $\baar$, \\
\indent {\em (i)}  $\pa_\pm^2 = 0$, \\
\indent {\em (ii)} $H_*(C_*^{\baar}(M, \A), \Jt )$ is independent of the contact form $\A$ defining $\xi$ and a generic choice of an $\A$-compatible almost complex structure $\Jt$. 
\end{conj}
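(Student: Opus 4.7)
The plan is to establish both parts via the standard symplectic field theory architecture, applying the SFT compactness theorem of Bourgeois--Eliashberg--Hofer--Wysocki--Zehnder together with sufficient regularity for the relevant moduli spaces. The hypothesis that no contractible orbit has grading in $\{-1,0,1\}$ is used precisely to exclude the noncylindrical breakings that would otherwise pollute the compactified boundary. I treat the required transversality as a black box, to be supplied in the body of the paper for dynamically separated forms.

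For part (i), fix good generators $x$ and $z$ in the same free homotopy class $\baar$ and compute the coefficient of $z$ in $\pa_\pm^2 x$. This is a weighted sum over intermediate good orbits $y$ and pairs $(u_1,u_2) \in \mathcal{M}_0^J(x;y) \times \mathcal{M}_0^J(y;z)$. I would realize this count as the algebraic boundary of a compactification of the $1$-manifold $\mhat_1(x;z)$, whose smoothness follows from the transversality hypothesis combined with \eqref{vdimintro}. By SFT compactness the codimension-one strata consist a priori of: (a) two-level broken cylinders, which contribute precisely the terms of $\pa_\pm^2$; (b) one-level buildings containing a noncylindrical genus-$0$ component with some negative ends capped off by planes or connected by further cylinders; and (c) configurations involving branched covers of trivial cylinders. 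Applying \eqref{vdimintro} together with the grading assumption forces any cap-off plane in (b) to contribute a factor with $|\gamma|\in\{-1,0,1\}$, contradicting rigidity; this rules out (b). Stratum (c) is the obstacle analyzed in Lemma \ref{nontrivialtentacles}, and for dynamically separated forms the uniform growth of the Conley--Zehnder index under iteration prevents the Fredholm arithmetic needed for such a branched cover to participate. The multiplicity weighting in \eqref{pa1} and \eqref{pa2} is calibrated so that the algebraic count of broken configurations matches the gluing count over each fiber, yielding $\pa_\pm^2=0$.

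For part (ii), given two choices $(\A_0,\Jt_0)$ and $(\A_1,\Jt_1)$ I construct an exact symplectic cobordism on $\R \times M$ with almost complex structure $\bar{\Jt}$ that is cylindrical and agrees with $\Jt_0$ as $\tau \to -\infty$ and with $\Jt_1$ as $\tau \to +\infty$, and define a linear map $\Phi \colon C_*^{\baar}(M,\A_0) \to C_*^{\baar}(M,\A_1)$ by counting rigid asymptotically cylindrical cylinders in the cobordism, weighted by multiplicities in direct analogy with $\delta$. The chain map relation follows from the same compactification argument applied to the $1$-dimensional cobordism moduli spaces, whose codimension-one boundary splits off exactly one SFT level at either the positive or negative end. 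A one-parameter family of interpolating data, together with a neck-stretching argument for the concatenation of two cobordisms, shows that $\Phi$ is independent of choices up to chain homotopy and that it is compatible with composition; standard formal arguments then give invariance of $H_*(C_*^{\baar}(M,\A),\pa_\pm)$.

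The principal obstruction throughout is transversality for moduli spaces of multiply covered curves: a generic $\R$-invariant $\Jt$ cannot cut out every $\mhat(\gp;\gm)$ transversally when the asymptotic orbits are iterated, and the $S^1$-dependent perturbations that would fix this destroy the cylindrical gluing arguments, as discussed in the introduction. I therefore expect the bulk of the work to consist of proving automatic transversality results strong enough to ensure that $\mathcal{M}_d^J$ is a manifold of the expected dimension for $d \le 2$, together with a geometric analysis ruling branched covers of trivial cylinders out of the codimension-one boundary of the compactified index-two moduli spaces. This is exactly the regularity program the paper undertakes for the dynamically separated subclass, and the inability to carry it out for every nondegenerate dynamically convex $\A$ is why the general conjecture remains open pending either more refined geometric input or the polyfold machinery.
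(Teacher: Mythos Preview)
This statement is presented in the paper as Conjecture~\ref{EGH1} from \cite{EGH}, not as a theorem; there is no proof in the paper to compare against. Your final paragraph correctly identifies transversality for multiply covered curves as the obstruction, so you already recognize that your outline is a heuristic rather than a proof.

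One point in your sketch deserves sharpening. You assert that the grading hypothesis rules out stratum (b) by forcing any planar cap to have $|\gamma|\in\{-1,0,1\}$, and you treat stratum (c) (branched covers of trivial cylinders) as a separate issue handled by the dynamically separated condition. But these are entangled: a branched double cover of a trivial cylinder, as in Example~\ref{ellipsoid}, can appear as the pair-of-pants piece of a type-(b) building and have index $0$ even when every contractible orbit has $\czm\ge 3$. So the grading hypothesis of the conjecture alone does not exclude such buildings; this is precisely the content of Section~\ref{quandaries} and of case (iii) in Lemma~\ref{nontrivialtentacles}. What the paper actually proves is much narrower: for dynamically separated forms in dimension $3$, Theorems~\ref{conditions}, \ref{conditions2}, and \ref{gluing} establish Conditions (A)--(D) and hence $\partial_\pm^2=0$, with Lemma~\ref{done} supplying the extra index growth needed to kill the type-(iii) configurations. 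Part (ii) of the conjecture is not addressed here at all and is deferred to \cite{jocompute, HN2, HN3}.
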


\subsubsection{Summary of main results}
The following results of this paper, when combined with on details on gluing and invariance appearing \cite{HNdyn, jocompute}, allow us to demonstrate that one can define cylindrical contact homology and prove invariance under the choice of $\Jt$ and \textbf{dynamically separated} contact form.  The results of \cite{HN2, HN3} prove invariance for the larger class of dynamically convex contact forms.  The definition of a \textbf{dynamically separated} contact form appears in Definition \ref{taut} of Section \ref{dynsep}.  

In the main result we obtain regularity for the wider class of \textbf{dynamically convex} contact forms.  The definition of dynamically convex may be found in Definition \ref{dyncon}, taken from \cite{HWZtight}.  Informally speaking a \textbf{dynamically convex} contact form $\A$ is necessarily defined on a 3-manifold and all contractible orbits $\ga$ associated to $R_\A$ satisfy $\czm(\ga) \geq 3$.  We note that dynamically separated contact forms are a more restrictive subclass of dynamically convex contact forms.

\begin{theorem}[Conditions (A) \& (B)]\label{conditions}
Let $(M,\A)$ be a nondegenerate contact 3-manifold equipped with a dynamically convex contact form.  Then after a generic $\A$-compatible choice of $\Jt$ on $(\R \times M, d(e^\tau\A))$ the following holds.
\begin{enumerate}
\item[\em (A)] All finite energy cylinders of index 1 and 2 are regular, including unbranched multiply covered cylinders;
\item[\em (B)] No finite energy cylinders of index $< 0 $ exist, and the only finite energy cylinders of index 0 are trivial cylinders;
\end{enumerate}
\end{theorem}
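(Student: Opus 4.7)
The strategy is to apply Wendl's automatic transversality theorem for punctured pseudoholomorphic curves in four-dimensional symplectic cobordisms, which is available because the symplectization $(\R\times M, d(e^\tau\A))$ has real dimension four.  Wendl's criterion states that a somewhere injective immersed asymptotically cylindrical curve $u$ of genus $g$ with asymptotic punctures partitioned as $\Gamma = \Gamma_0\sqcup\Gamma_1$, where $\Gamma_0$ collects the positive hyperbolic (i.e.\ even-$\czm$) ends, is Fredholm regular whenever
\[
\ind(u) \;>\; 2g - 2 + \#\Gamma_0.
\]
Dynamical convexity ($\czm(\ga)\ge 3$ for contractible Reeb orbits), combined with the dimension-three dichotomy that elliptic and negative hyperbolic orbits have odd $\czm$ while positive hyperbolic orbits have even $\czm$, controls both the parities of the asymptotic orbits and the asymptotic winding numbers needed to invoke the criterion.

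For part (A), any cylinder has $g=0$ and $\#\Gamma=2$, so Wendl's condition reduces to $\ind(u) > \#\Gamma_0 - 2$.  In the somewhere injective case this is satisfied for every index one or two cylinder, except possibly for the borderline configuration of two positive hyperbolic ends at $\ind(u)=1$, which is ruled out by a case analysis of $\czm$ under iteration combined with dynamical convexity.  For unbranched multiply covered cylinders $u = v\circ\varphi$, with $v$ somewhere injective and $\varphi\colon (\R\times S^1, j_0)\to (\R\times S^1, j_0)$ an unbranched $k$-fold cover, one applies the covering version of the criterion: the linearized Cauchy--Riemann operator on the normal bundle of $u$ pulls back via $\varphi$ from that of $v$, and vanishing of its cokernel follows from winding number estimates at the iterates $\gp^k$ and $\gm^k$ supplied by the iteration formula for $\czm$ under dynamical convexity.

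For part (B), somewhere injective cylinders of strictly negative index are excluded for generic $\Jt$ by Sard--Smale applied to the universal moduli space of somewhere injective curves.  A somewhere injective index zero cylinder must satisfy $\czm(\gp)=\czm(\gm)$ and has nonnegative $d\A$-energy $\int u^* d\A = \mathcal{A}(\gp) - \mathcal{A}(\gm)$; positivity of area for nonconstant pseudoholomorphic curves in symplectizations then forces $\gp=\gm$ and $u$ to be a trivial cylinder.  For multiply covered $u = v\circ\varphi$, the base $v$ is somewhere injective, so $\ind(v)\ge 0$ with equality only for trivial $v$; the iteration behavior of $\czm$ under dynamical convexity then yields $\ind(u)\ge 0$ with equality only if $u$ is itself a trivial cylinder.

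The main obstacle is the unbranched multiply covered case in both parts, since standard Sard--Smale perturbation arguments fail for curves that are not somewhere injective.  Circumventing this depends on Wendl's automatic transversality for covers together with the sharp control on Conley--Zehnder indices of iterates afforded by dynamical convexity; the technical heart of the argument is the case analysis based on whether the asymptotic orbits $\gp,\gm$ are elliptic, positive hyperbolic, or negative hyperbolic, and how the $\czm$ of their iterates constrain the asymptotic winding numbers appearing in Wendl's inequality.
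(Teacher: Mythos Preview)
Your overall strategy---applying Wendl's automatic transversality criterion $\ind(u) > 2g - 2 + \#\Gamma_0 + 2Z(du)$---matches the paper's, but there is a genuine error in your argument for (B). You claim that a somewhere injective index~$0$ cylinder must be trivial because $\czm(\gp)=\czm(\gm)$ together with nonnegativity of $d\A$-area forces $\gp=\gm$. This does not follow: equality of Conley--Zehnder indices says nothing about equality of actions, so $A(u)=\mathcal{A}(\gp)-\mathcal{A}(\gm)$ can be strictly positive even when the indices coincide. The correct argument, which the paper uses, is that Sard--Smale genericity (via \cite{HWZ3}) gives $\ind(v)\ge 1$ for every \emph{nontrivial} somewhere injective cylinder $v$, so nontrivial index~$0$ somewhere injective cylinders simply do not exist for generic~$J$. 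This sharper bound $\ind(v)\ge 1$ (not merely $\ge 0$) is then what feeds into the multiply covered case.

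Two further points. First, dynamical convexity is not actually used anywhere in the proof of (A) and (B); the paper establishes both for an arbitrary nondegenerate contact $3$-manifold. The key iteration input is the general inequality $k\,\czm(\ga)-k+1 \le \czm(\ga^k) \le k\,\czm(\ga)+k-1$, which holds without any hypothesis on $\czm(\ga)$, together with the observation that if $\ind(v)=1$ then one of $\gamma_\pm$ has even Conley--Zehnder index and is therefore hyperbolic, allowing the inequality to be sharpened on that end. Your appeal to dynamical convexity to handle a ``borderline configuration of two positive hyperbolic ends at $\ind(u)=1$'' is unnecessary: two positive hyperbolic ends both have even $\czm$, so their difference is even and cannot equal~$1$. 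Second, for the unbranched multiply covered case in (A) you need $Z(du)=0$; the paper obtains this by first showing that $\ind(u)\le 2$ forces $\ind(v)\le 2$ for the underlying simple cylinder $v$, invoking \cite{HT2} to conclude $v$ is immersed, and then applying the chain rule to get $Z(du)=Z(d\varphi)=0$ since $\varphi$ is unbranched. Your sketch alludes to ``winding number estimates'' here but does not supply this immersion step.
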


\begin{remark}[Condition (C)]\em
Moduli spaces of pseudoholomorphic planes must have (virtual) dimension $\geq  2$ in order to define a cylindrical chain complex without making use of abstract perturbations; see Section \ref{quandaries}.  In dimension 3 this is equivalent to requiring that all contractible Reeb orbits $\ga$ satisfy $\czm(\ga)\geq 3$, as in the definition of dynamically convex.  We will refer to this as Condition (C).  

Currently Hutchings and Nelson are investigating in dimension 3 if one can relax this requirement so that contractible $\ga$ need only satisfy $\czm(\ga)\geq 2$, provided the counts of index zero holomorphic planes are taken to be zero.
\end{remark}

\begin{theorem}[Condition (D)]\label{conditions2} Let $J$ be a generic $\A$-compatible almost complex structure associated to $M^3$ and $\A$ be a nondegenerate contact form which is either dynamically separated or which admits no contractible orbits. Then the following compactness result holds.
\begin{enumerate}
\item[\em (D)] If $x,z \in \calpa$ with $\czm(x)-\czm(z)=2$ then
\begin{equation}\label{compactness}
\mbar_1(x; z) \subseteq \mhat_1(x; z) \  \ \cup \bigcup_{\substack{ y \in \calpa \\ \czm(y) = \czm(x)-1 }}  \mhat_0(x; y) \times \mhat_0(y; z).
\end{equation}
\end{enumerate}
\end{theorem}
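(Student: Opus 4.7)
The plan is to apply the SFT compactness theorem \cite{BEHWZ} to an arbitrary sequence $\{u_n\} \subset \mhat_1(x;z)$ and then argue, using the index budget $\czm(x)-\czm(z)=2$ together with Theorem \ref{conditions} and the dynamical separation hypothesis, that the only possible limit holomorphic buildings are precisely the two types appearing on the right-hand side of (\ref{compactness}). A subsequence converges to a stable building $\mathbf{u}=(u^{(1)}, \dots, u^{(N)})$ whose levels are stacked along matched Reeb orbits, with top positive puncture at $x$, bottom negative puncture at $z$, and with the Fredholm indices of all components summing to $2$. The target conclusion is that either $N=1$ and $\mathbf{u}\in\mhat_1(x;z)$, or $N=2$ with each level a single cylindrical component of Fredholm index $1$ meeting at a good orbit $y$ of index $\czm(y)=\czm(x)-1$.

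The first step is to rule out noncylindrical components of any level. Finite energy planes are excluded: if $\A$ admits no contractible Reeb orbits the exclusion is immediate, and under the dynamically separated hypothesis Condition (C) gives $\czm(\ga)\geq 3$ for every contractible $\ga$, forcing each plane to have Fredholm index $\geq 2$ via (\ref{vdimintro}); combined with the fact that any building connecting $x$ to $z$ with $\czm(x)\neq \czm(z)$ must also contain at least one nontrivial cylindrical component of index $\geq 1$ by Condition (B), this would push the total index above $2$. Higher-genus components contribute an additional $2g$ to the Fredholm index in dimension $3$ and are similarly ruled out by the budget. Pair-of-pants components and their higher-punctured analogs are controlled by the uniform growth of $\czm$ under iteration built into Definition \ref{taut}: the dynamical separation inequality on iterated Conley-Zehnder indices forces $\czm(\ga_+) - \sum_i \czm(\ga_i)$ to be too negative for any such component to fit inside the index-$2$ budget once the other nontrivial cylindrical levels required to connect $x$ to $z$ are accounted for.

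With every nontrivial component of every level a cylinder, Theorem \ref{conditions} provides the decisive input. By Condition (B) each nontrivial finite-energy cylinder has Fredholm index $\geq 1$, with index-$0$ cylinders forced to be trivial; by Condition (A) the nontrivial ones are regular so virtual and actual dimensions agree. Stability in the SFT sense forbids levels consisting only of trivial cylinders without a neighboring nontrivial component, so the total index $2$ must be carried by either a single nontrivial cylinder, giving $N=1$ and $\mathbf{u}\in\mhat_1(x;z)$, or by two nontrivial cylinders of index $1$ each, stacked at a common intermediate orbit $y$ satisfying $\czm(y)=\czm(x)-1$. This orbit $y$ must be good, for the standard parity and coherent-orientation argument shows that no transversally cut-out Fredholm index $1$ cylinder can be asymptotic to a bad orbit at either end. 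These are exactly the configurations on the right-hand side of (\ref{compactness}).

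The main obstacle is the potential appearance of \emph{ghost levels} built from nontrivial branched covers of trivial cylinders, which bypass the regularity input of Theorem \ref{conditions} because they fail to be somewhere injective. The dynamical separation hypothesis, or the no-contractible-orbits hypothesis, is essential here, and the required exclusion is carried out in the analog of Lemma \ref{nontrivialtentacles}: the uniform Conley-Zehnder spacing between iterates of a simple orbit forces any nontrivial branched cover of a trivial cylinder to carry Fredholm index strictly exceeding the available budget once combined with the other nontrivial cylindrical levels needed to connect $x$ and $z$. Once ghost levels are excluded, the trichotomy above yields (\ref{compactness}).
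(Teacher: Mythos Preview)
Your overall strategy---SFT compactness followed by index bookkeeping to constrain the limit building---matches the paper's, but there are concrete gaps and one outright error.

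First, the claim that the intermediate orbit $y$ ``must be good, for the standard parity and coherent-orientation argument shows that no transversally cut-out Fredholm index $1$ cylinder can be asymptotic to a bad orbit at either end'' is wrong.  The statement of Theorem~\ref{conditions2} allows $y\in\calpa$, i.e.\ $y$ may be bad; the paper flags this explicitly in the Remark immediately following the theorem (``Baffled by bad orbits'').  The exclusion of bad intermediate orbits happens later, at the level of orientations in the proof that $\partial^2=0$ (Remark~\ref{goodbad} and Theorem~\ref{gluing}(ii)), not in the compactness statement itself.

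Second, your exclusion of noncylindrical components is incomplete.  You invoke the dynamical separation condition to control pair-of-pants and higher-punctured components, but Definition~\ref{taut} only constrains Conley--Zehnder indices of \emph{iterates of the same simple orbit}; it says nothing directly about $\czm(\gp)-\sum_i\czm(\ga_i)$ when the $\ga_i$ are covers of distinct simple orbits.  The paper closes this gap differently: it bounds the index of an arbitrary genus-$0$ branched cover via Riemann--Hurwitz (Propositions~\ref{Hurwitztentacles}, \ref{cyl}, \ref{trivcyl}), and then runs an \emph{induction on building height} (Lemmas~\ref{lemma1} and~\ref{lemma2}) to show that any genus-$0$ sub-building with one positive end and no (resp.\ one) negative end has index $\geq 2$ (resp.\ $\geq 1$), with equality only for a single plane (resp.\ cylinder).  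This inductive structure is what lets one control buildings with many levels and multiply covered components simultaneously; your single-pass case analysis does not substitute for it.

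Third, your final paragraph mislocates the key step.  Lemma~\ref{nontrivialtentacles} does \emph{not} exclude branched covers of trivial cylinders: it is precisely the lemma that \emph{permits} the problematic case~(iii), a pair of pants in $\calm(\ga^{k_1+k_2};\ga^{k_1},\ga^{k_2})$ of index $0$ capped by an index-$2$ plane.  The actual exclusion of case~(iii) is Lemma~\ref{done}, where the dynamically separated condition is used to show $\czm(\ga^{k_1+k_2})-\czm(\ga^{k_1})\geq 4$, forcing such a pair of pants to have index $\geq 2$ and hence the building to have index $\geq 5$.  (If instead $\A$ admits no contractible orbits, case~(iii) is vacuous because no plane exists.)
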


\begin{remark}[Baffled by bad orbits]\em
 The index calculations used to obtain Condition (D) do not immediately exclude broken cylinders limiting on bad Reeb orbits.  Remark \ref{goodbad} explains why in Condition (D) $y \in \calpa$ is not problematic in proving $\pa^2$=0, even though the chain complex is generated only by good Reeb orbits.  As explained in the remark, the slightly more refined arguments involving both index and multiplicity of \cite{HNdyn} allow us to conclude that  bad orbits can never show up as asymptotic limits of the broken cylinders.
 \end{remark}

Conditions (A)-(D) are sufficient to ensure that cylindrical contact homology is well-defined, after appealing to the following analogue of Floer's gluing theorem.  This analogue is restated below, with its proof corresponding to Lemma 4.3 and Theorem 1.3 of \cite{HNdyn}, and is a refinement of the gluing theorem appearing in in \cite{jothesis}.  A brief explanation of the proof of Theorem \ref{gluing}(ii) is also given after its statement. 

\begin{theorem}[Gluing]\label{gluing}
Let $(M,\A)$ be a nondegenerate dynamically separated contact 3-manifold and $J$ be a generic $\A$-compatible almost complex structure.  If $x, z \in \mathscr{P}_{\mbox{\scriptsize good}}(\A;c)$
then the following holds:
\begin{enumerate}[\em (i)]
\item For $u \in \mhat_0(x;y)$ and $v \in \mhat_0(y;z)$ there are exactly $\dfrac{ \emult (y)}{\mbox{\em lcm}( \emult(u), \emult(v))}$ ends of the moduli space $\mhat_1(x;z)$ that converge to the building $(u,v)$ and each such end consists of cylinders of multiplicity $\mbox{\em gcd}\left({\emult}(u), \emult(v) \right).$
\item  Let $y$ be a bad orbit and $u \in \mhat_0(x;y)$ and $v \in \mhat_0(y;z)$.  Then the signed count of the ends of $\mhat_1(x;z)$   
which limit to the broken curve $(u,v)$ is 0.
\end{enumerate}
\end{theorem}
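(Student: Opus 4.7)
The plan is to reduce (i) to classical Floer-type gluing applied to the somewhere-injective factors of $u$ and $v$, combined with a combinatorial count of how the asymptotic marker at the intermediate orbit $y$ can be matched; part (ii) then follows from a standard sign-cancellation argument for bad orbits.

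For (i), I would write $u = u' \circ \varphi_u$ and $v = v' \circ \varphi_v$, where $u', v'$ are somewhere-injective finite energy cylinders and $\varphi_u, \varphi_v$ are cyclic unbranched covers of $\R \times S^1$ of degrees $p := \mult(u)$ and $q := \mult(v)$, respectively. Since $u \in \mhat_0(x;y)$ and $v \in \mhat_0(y;z)$, the underlying cylinders $u'$ and $v'$ have index $1$, and by Theorem \ref{conditions}(A) they are both transversely cut out. Floer's classical gluing theorem, adapted to the SFT setting as in \cite{HT2}, produces for each compatible choice of asymptotic markers at the intermediate orbit a unique $1$-parameter family of somewhere-injective index $1$ glued cylinders joining $(u', v')$, parametrized by the gluing length $R \gg 0$.

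The combinatorial count then proceeds as follows. The orbit $y$ carries a cyclic $\Z/\mult(y)$ symmetry from the deck transformations of the $\mult(y)$-fold cover over its underlying simple orbit. The negative end of $u$ sees $y$ only through its $p$-fold cover, so the asymptotic marker used in gluing is well-defined only modulo $\Z/p$; similarly the marker at the positive end of $v$ is defined modulo $\Z/q$. To glue $\varphi_u$ to $\varphi_v$ consistently across $y$, one selects a matching modulo the subgroup $\Z/\text{lcm}(p,q) \subseteq \Z/\mult(y)$ generated by both ambiguities. The number of distinct such matchings is therefore $\mult(y)/\text{lcm}(p,q)$, and the resulting cover of $\R \times S^1$ obtained by gluing $\varphi_u$ to $\varphi_v$ has degree $\gcd(p,q)$, giving each glued cylinder multiplicity $\gcd(p,q)$ as required.

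For (ii), suppose $y$ is bad, so by Section \ref{intro} it is an even cover of a simple negative hyperbolic orbit. The coherent orientation scheme of \cite{BM} assigns a sign to each glued cylinder, and the defining feature of a bad orbit is that the residual cyclic deck action on asymptotic markers at $y$ is orientation-reversing. Consequently the $\mult(y)/\text{lcm}(p,q)$ glued ends produced in (i) split into pairs with opposite signs, and their signed contribution to $\pa^2$ vanishes. The main obstacle is precisely this orientation bookkeeping in (ii): one must chase through the coherent-orientation conventions to confirm that the deck action on asymptotic markers at a bad $y$ flips the sign. The combinatorics of (i) is clean once the gluing of simple factors and the marker formalism are in place, and the underlying analytic gluing is routine for nondegenerate asymptotic limits after Theorem \ref{conditions}.
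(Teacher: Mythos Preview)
Your framing of (i) contains a real problem. You propose to factor $u = u'\circ\varphi_u$ and $v = v'\circ\varphi_v$ and then apply classical gluing to the somewhere injective cylinders $u'$ and $v'$. But $u'$ has its negative end at the orbit $y^{1/p}$ (the $\mult(y)/p$-fold iterate of the underlying simple orbit) while $v'$ has its positive end at $y^{1/q}$; unless $p=q$ these are \emph{different} Reeb orbits, so there is no broken building $(u',v')$ to glue. The analytic input is not gluing of the simple factors but gluing of $u$ and $v$ themselves, which is legitimate precisely because Condition~(A) makes both of them regular even when multiply covered. The paper does not carry this out here; it cites Lemma~4.3 of \cite{HNdyn} for (i). Your marker combinatorics---$\mult(y)/\operatorname{lcm}(p,q)$ matchings, each yielding a glued cylinder of covering degree $\gcd(p,q)$---is the correct count, but it sits on top of the wrong analytic foundation.

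For (ii) there is a genuine gap. The paper's argument (Remark~\ref{goodbad}) first invokes Lemma~2.5(b) of \cite{HNdyn} to conclude that when $y$ is bad and $x,z$ are good one is forced to have $\mult(u)=\mult(v)=1$. Only then does part (i) give exactly $\mult(y)$ ends, all somewhere injective, and since $\mult(y)$ is even the full $\Z/\mult(y)$ deck action on markers is available; by \cite{BM} half of these carry each sign. Your argument skips this step and asserts directly that the ends ``split into pairs with opposite signs'' via an orientation-reversing deck transformation. But the deck transformations that act on the set of glued ends form the subgroup of $\Z/\mult(y)$ generated by $\mult(y)/p$ and $\mult(y)/q$, which has order $\operatorname{lcm}(p,q)$; if $p$ and $q$ were both odd this subgroup would miss the orientation-reversing element, and no cancellation is forced. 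You need the $\mult(u)=\mult(v)=1$ input (or an equivalent parity argument) to close the argument.
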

In Theorem \ref{gluing}(i) the intermediary orbit $y$ is allowed to be bad.  To see why one obtains (ii), we refer to the following remark, which reproves (ii), assuming (i) holds.  The statement and proof of Theorem \ref{gluing}(i) appears as Lemma 4.3 in \cite{HNdyn}.  Note that Theorem \ref{gluing}( ultimately allows us to prove that $\pa_\pm^2=0$.

\begin{remark}[Proof of Theorem \ref{gluing}(ii)]\label{goodbad}\em
The astute reader likely wonders why Theorem \ref{gluing}(ii) is not belied by Theorems \ref{gluing}(i) and \ref{conditions2}, due to the potential appearance of bad orbits as in (\ref{compactness}). To see why this holds, we repeat the argument given in Section 4.4 of \cite{HNdyn}.

Let $x, z$ be good Reeb orbits such that $\czm(x)-\czm(z)=2$.  From Conditions (A) and (B) and Lemma 4.2(b) of \cite{HNdyn} we know that the moduli space of index 2 cylinders, $\mathcal{M}^J_1(x;z)$ is an oriented 1-manifold and that the covering multiplicity $m$ is constant on each component\footnote{From \cite{Wtrans} Theorem 0, regularity achieved in Condition (A) induces an orbifold structure.  This is upgraded to a manifold structure in Lemma 4.2(b) of \cite{HNdyn} and mentioned in the text after this remark.}.  In order to prove Theorem \ref{gluing})(ii) and obtain that $\partial_\pm^2=0$ we must show the following.
\begin{enumerate}
\item  The moduli space $\mathcal{M}^J_1(x;z)$ has a compactification to a compact oriented 1-manifold $\mbar_1(x; z)$, obtained by attaching one boundary point to each end, such that
\begin{equation}\label{gluestuff}
\sum_{X \in \pi_0 \left( \mbar_1(x; z) \right)} \frac{\# \partial X}{\emult(X)} = \langle \delta \kappa \delta x, y \rangle,
\end{equation}
where $\#\partial X$ denotes the signed count of boundary points of the component $X$, which of course is zero.  Equation (\ref{gluestuff}) implies that $\delta \kappa \delta=0$

\item The pair $(u,v)$ where $u \in \mathcal{M}^J_0(x, y)$ and $v \in \mathcal{M}^J_0(y,z)$ for $y$ good contributes 0 to the left hand side of (\ref{gluestuff}) when $y$ is bad and $\dfrac{\epsilon(u)\epsilon(v)\emult(y)}{\emult(u)\emult(v)}$ when $y$ is good.
\end{enumerate}
The first part of (1.) follows by Conditions (A)-(D), as each end of  $\mathcal{M}^J_2(x;z)/\R$ limits to a building $(u, v)$ where $u \in \mathcal{M}^J_0(x, y)$  and $v \in \mathcal{M}^J_0(y,z)$ for some Reeb orbit $y$.  The second part of (1.) follows from (2.). The proof of (2.) is as follows.

If $y$ is bad, then by Lemma 2.5(b) of \cite{HNdyn} we have $\emult(u),\emult(v) = 1$. Consequently, by Theorem \ref{gluing}(i), there are $\emult(y)$ ends of the moduli space of index 2 cylinders converging to $(u,v)$ each of which has $\emult = 1$. According to \cite{BM}, half of these ends have positive sign and half have negative sign. Here the ÒsignÓ of an end means the sign of the corresponding boundary point of the index two moduli space. Thus there is no contribution when $y$ is bad.

If $y$ is good then by Theorem \ref{gluing}(i), the number of ends of the moduli space of index 2 cylinders converging to $(u, v)$, divided by their multiplicity, is $\dfrac{\emult(y)}{\emult(u)\emult(v)}.$ By \cite{BM}, each end has sign $\epsilon(u)\epsilon(v)$. This completes the proof of Theorem  \ref{gluing}(ii).
\end{remark}

Conditions (A) and (B) ensure that multiply covered curves cannot have smaller index than their underlying curves.  These allow us to give $\mhat_d(\gp;\gm)$ the structure of a smooth manifold for $d=0,1$, by Lemma 4.2(b) of \cite{HNdyn} which in these cases refines Wendl's automatic transversality result, Theorem \ref{folk0}, so that one obtains a manifold structure, rather than an orbifold structure.  This results in well-defined expressions for the differentials in (\ref{pa1}) and (\ref{pa2}).  These results are proven in Section \ref{arbitrary} by appealing to regularity results in \cite{HWZ3, HT2, Wtrans} and index calculations in dimension 3.  The proof of (D) is contained in Section \ref{numerology}, which relies on (A)-(C) along with additional index calculations.

\subsection{Dynamically separated contact forms}\label{dynsep}
By restricting ourselves to the following class of  \textbf{dynamically separated} contact forms in dimension 3, we will be able to obtain the requisite compactness and regularity results of Condition (D) in symplectizations.  In \cite{jocompute} we prove the analogous regularity and compactness results needed for obtaining invariance of cylindrical contact homology under choices of dynamically separated contact forms and $\A$-compatible $J$. 

The definition of dynamically separated necessitates that $c_1(\ker \A)=0$ so that a $\Z$-grading is available.  For there to be an absolute integral grading one must further require that $H^1(M;\Z)=0$.  We begin formulating the dynamically separated condition when all Reeb orbits are contractible, and then explain how to adapt this in the presence of noncontractible orbits.  

\begin{definition} \em
Let $(M,\A)$ be a nondegenerate 3-dimensional contact manifold with $c_1(\ker \A)=0$ such that all the Reeb orbits of $R_\A$ are contractible. Then $\A$ is said to be \textbf{dynamically separated} whenever the following conditions hold.
\begin{enumerate}
\item[{(I)  }]  If $\ga$ is a closed simple Reeb orbit then $ 3 \leq  \czm({\ga}) \leq 5$;
\item[{(II)}] If $\ga^k$ is the $k$-fold cover of a simple orbit $\ga$ then $\czm(\ga^k) = \czm(\ga^{k-1})+4.$
\end{enumerate}
\end{definition}

In order to define a dynamically separated contact form in the presence of noncontractible orbits, we must introduce the following notation to keep track of the free homotopy class of a Reeb orbit after each iteration of the underlying simple orbit.  This will be used to define an analogue of Condition II with respect to a free homotopy class $\baar \in \pi_0(\Omega M)$, as follows.

\begin{definition}\label{taut} \em
Let $(M,\A)$ be a 3-dimensional contact manifold with $c_1(\ker \A)=0$. Let $\gamma$ be a simple Reeb orbit. For each free homotopy class $\baar$, let 
\[
1 \leq k_1(\baar, \ga) < k_2(\baar, \ga) < ... <k_i(\baar, \ga) <...
\]
 be the (possibly empty or infinite) list of all integers such that all the $k_i(\baar,\ga)$-fold covers of $\ga$ lie in the same free homotopy class $\baar$.  We will use $\baar=0$ to represent the class of contractible orbits. A nondegenerate contact form $\A$ is said to be \textbf{dynamically separated} whenever the following conditions are satisfied. 
\begin{enumerate}
\item[{(I.i)  }]   For the class of contractible orbits, $\baar=0$, we have $ 3 \leq  \czm({\ga^{k_1(0, \ga)}}) \leq 5$;
\item[{(I.ii)}]   For each $\baar \neq 0$ there exists  $m(\baar, \ga) \in \Z_{>0}$ such that  $2m-1 \leq \czm(\ga^{k_1(\baar, \ga)}) \leq 2m+1$;
\item[{(II)  }]  For each free homotopy class $\baar$ we have $\czm(\ga^{k_{i+1}(\baar, \ga)})=\czm(\ga^{k_{i}(\baar, \ga)})+4.$
\end{enumerate}
\end{definition}

For computational methods it is often practical to consider contact forms which will be \textbf{dynamically separated up to (large) action}, which is proportional to the index.  This modification is explained in the following definition and we note that many Morse-Bott contact forms can be made dynamically separated up to large action by a small perturbation, allowing one to include the standard contact forms on $T^3$ and $S^3$.

\begin{definition}\em
A nondegenerate contact form $\A$ is said to be \textbf{dynamically separated up to action $A$} whenever the following conditions are satisfied. 
\begin{enumerate}
\item[{(I.i)  }]   For the class of contractible orbits, $\baar=0$, we have $ 3 \leq  \czm({\ga^{k_1(0, \ga)}}) \leq 5$ and 
\[
 \mathcal{A}(\gamma^{k_1(0, \ga)};\alpha):=\int_{\gamma^{k_1(0, \ga)}} \alpha < {A};
\]
\item[{(I.ii)}]   For each $\baar \neq 0$ there exists  $m(\baar, \ga) \in \Z_{>0}$ such that  $2m-1 \leq \czm(\ga^{k_1(\baar, \ga)}) \leq 2m+1$ and
\[
\mathcal{A}(\gamma^{k_1(\baar, \ga)};\alpha):=\int_{\gamma^{k_1(\baar, \ga)}} \alpha < {A};
\]
\item[{(II)  }]  For each free homotopy class $\baar$ we have $\czm(\ga^{k_{i+1}(\baar, \ga)})=\czm(\ga^{k_{i}(\baar, \ga)})+4,$ whenever 
\[
\mathcal{A}(\gamma^{k_{i+1}(\baar, \ga)};\alpha):=\int_{\gamma^{k_{i+1}(\baar, \ga)}} \alpha < {A}.
\]
\end{enumerate}

\end{definition}

\begin{remark}[Regularity up to action A] \em
If we assume that $\A$ is a nondegenerate dynamically separated contact form up to action ${A}$, then Theorem \ref{conditions2} ``holds up to action ${A}$."  Namely, if we denote 
\[
\mathscr{P}^{< {A}}(\alpha;c) :=\{\gamma \in \calpa \ \arrowvert \ \mathcal{A}(\gamma;\alpha):=\int_\gamma \alpha < {A} \},
\]
then condition (D) is reformulated as follows, assuming $J$ is a generic $\A$-compatible almost complex structure:
\begin{enumerate}
\item[ (D)] If $x,z \in \mathscr{P}^{< {A}}(\alpha;c)$ with $\czm(x)-\czm(z)=2$ then
\begin{equation}\label{compactnessaction}
\mbar_1(x; z) := \mhat_1(x; z) \  \ \cup \bigcup_{\substack{ y \in \mathscr{P}^{< \mathcal{A}}(\alpha;c) \\ \czm(y) = \czm(x)-1 }}  \mhat_0(x; y) \times \mhat_0(y; z).
\end{equation}
\end{enumerate}
Similarly, the gluing result of Theorem \ref{gluing} can be restated to hold up to action $A$.
\end{remark}

\medskip
Before giving some examples of dynamically separated contact forms, we make a remark about the relation between iteration properties of the Conley-Zehnder indices of closed Reeb orbits in dimension 3 and the choice of trivialization of $\xi$ along a Reeb orbit $\gamma$ and and it's iterate $\gamma^k$.

\begin{example}[Ellipsoids]\em
The dynamically separated condition is further explained in relation to irrational ellipsoids in Examples \ref{ellipsoid}-\ref{ellipsoidnoinvariance}.  Examples \ref{ellipsoid} and \ref{ellipsoidnoinvariance} are irrational ellipsoids which are not dynamically separated.  We explain in these cases how the heuristic proofs of $\pa^2=0$ and invariance fail in the absence of the analytic techniques of \cite{HNdyn, HN2, HN3}.   In Example \ref{ellipsoiddynsep} we show for specific values of $a$ and $b$ that we can make $E(a,b)$ irrationally separated up to large action.
\end{example}

Proofs of the following examples and computations will appear in \cite{jocompute}.

\begin{example}[Prequantization]\label{pre quant}\em
The contact 3-sphere $(S^3, \xi_{std}=\ker \lambda_0)$ can be realized as an example of a prequantization space via the Hopf fibration 
\[
\begin{array}{c}
\hopf \\
h(u,v)=(2u \bar{v}, |u|^2- |v|^2), \ (u,v) \in S^3 \subset \C^2 \\
\end{array}
\]
over the standard symplectic 2-sphere $(S^2, \omega_0)$.

This construction lends itself to a natural perturbation of $(S^3, \lo)$ and holds for any prequantization space.  It is comprised of adding a small lift of a Morse-Smale function on the base $(S^2, \om_0)$ to the original contact form 
\begin{equation}
\label{perturbedform1}
\lep=(1+\vepsilon h^*H)\lo.
\end{equation}
Since $(1+\vepsilon h^*H)>0$ for small $\vepsilon>0$, the contact structure remains unchanged as ker $\lep =$ ker $\lo = \xi_{std}$. The perturbed Reeb dynamics are given by
\begin{equation}
\label{perturbedreeb1}
R_{\vepsilon}=\frac{R}{1+\vepsilon h^*H} + \frac{\veps \tilde{X}_H}{{(1+\vepsilon h^*H)}^{2}}.
\end{equation}
Here $X_{H}$ is a Hamiltonian vector field\footnote{We use the convention $\omega(X_H, \cdot) = dH.$} on $S^2$ and $\tilde{X}_{ H}$ its horizontal lift,
\[
\mbox{e.g.} \ \ dh(q)\tilde{X}_{ H}(q) = X_{ H}(h(q)) \ \ \mbox{ and } \ \ \lo(\tilde{X}_{ H})=0.
\]

The only fibers that remain Reeb orbits of this perturbed contact form are iterates of fibers over the critical points $p$ of $H$.  For sufficiently small $\vepsilon$ the surviving $k$-fold covers of simple orbits in the fiber, denoted by $\gamma_p^k$, have action $A \sim 1/\vepsilon$, are non-degenerate, and satisfy
\begin{equation}
\label{czprequant}
\czm(\gpk)=4k-1+\mbox{\emph{index}}_p(H).
\end{equation}

However, we may obtain additional Reeb orbits that cover closed orbits of $X_{ H}$ but since $\vepsilon H$ and $ \vepsilon dH$ are small, these Reeb orbits all have periods much greater than $2 k \pi$ when $\veps \sim 1/k$.  In \cite{jothesis, jocompute} we demonstrate that a natural filtration on both the action and the index exists by letting $\vepsilon \to 0$. This gives rise to a formal version of filtered cylindrical contact homology to which the Reeb orbits covering $X_H$ do not contribute.

The proportionality between the action and index of the Reeb orbits, permits the use of direct limits to recover the full cylindrical contact homology from the truncated chain groups, consisting of
\[
 C_*^{<A_k}(S^3, \lambda_{\vepsilon_k}, H)=\{ \gamma_p^j \ | \ j \in [1, k] \mbox{ and } p \in \mbox{Crit}(H)\}.
\]  

 \begin{figure}
 \begin{center}
    \includegraphics[width=0.63\textwidth]{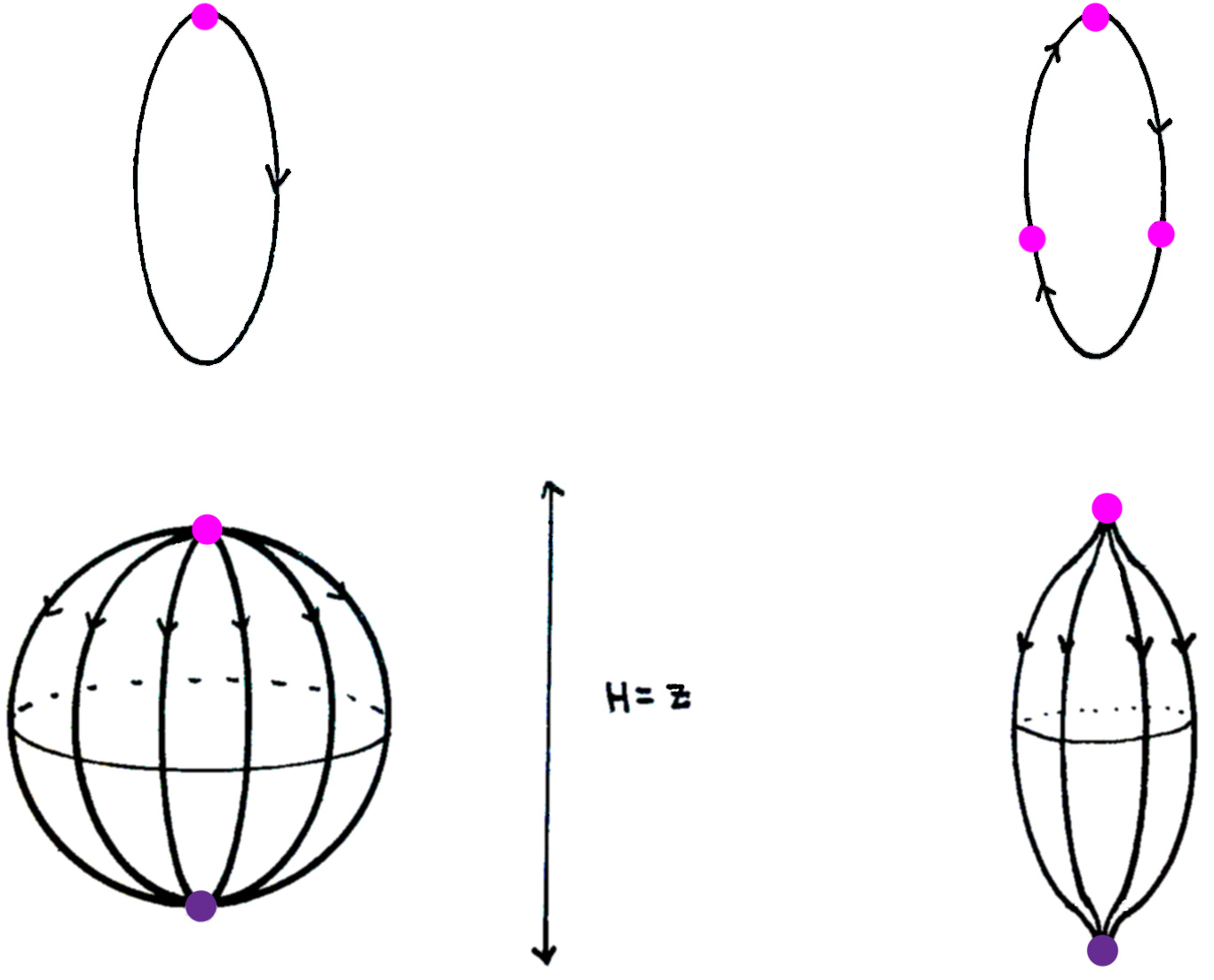}
\end{center}
  \caption{ $-\nabla H$ for $H=z$  with a fiber over $S^2$ and $S^2/\Z_3$ respectively.}
  \label{figlens}
\end{figure}

If we take $H=z$, the height function on $S^2$ as in Figure \ref{figlens} then we obtain a maximum at the north pole (index 2) and a minimum at the south pole (index 0).  Because the index increases by 4 under iteration then $\czm(\gpk)$ in (\ref{czprequant}) is always odd, so the differential vanishes, resulting in the following theorem.  We note that the statement of invariance will be justified by \cite{HN2, HN3, jocompute}.  

\begin{theorem}\label{HCsphere}
The cylindrical contact homology for the sphere $(S^{3},\xi_{std})$ is given by
\[
CH_*(S^{3}, \xi_{std}) = \left\{  \begin{array}{cl}
   \Q & *  \geq 2, \mbox{ even }  \\
    0 & * \ \mbox{ else } \\
\end{array} \right.
\]
\end{theorem}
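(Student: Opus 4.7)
The plan is to use the perturbed contact form $\lambda_\varepsilon = (1 + \varepsilon h^* H)\lambda_0$ with $H = z$, the height function on $S^2$, and compute the cylindrical contact homology of the truncated chain complex, then pass to the direct limit as $\varepsilon \to 0$. First I would verify that for suitable $\varepsilon_k$ chosen so that the action cutoff $A_k \sim 1/\varepsilon_k$ captures only the orbits $\gamma_p^j$ with $j \in [1,k]$ and $p$ a critical point of $H$, the perturbed contact form $\lambda_{\varepsilon_k}$ is dynamically separated up to action $A_k$. Using $\czm(\gamma_p^j) = 4j - 1 + \mathrm{index}_p(H)$ from (\ref{czprequant}) and the fact that on $S^3$ all orbits are contractible, one checks Condition (I.i) holds since at the south pole $\czm(\gamma_{\min}^1) = 3$ and at the north pole $\czm(\gamma_{\max}^1) = 5$, both in the window $[3,5]$; Condition (II) holds since $\czm(\gamma_p^{j+1}) - \czm(\gamma_p^j) = 4$. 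The Reeb orbits coming from closed orbits of $X_H$ have period much greater than $2\pi k$ and therefore lie above the action threshold, so they are excluded at stage $k$.

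With this in hand, Theorems \ref{conditions} and \ref{conditions2} apply up to action $A_k$, giving a well-defined truncated chain complex $C_*^{<A_k}(S^3, \lambda_{\varepsilon_k}, H)$ generated by the $\gamma_p^j$ with $j \in [1,k]$ and $p \in \mathrm{Crit}(H)$. Using the grading (\ref{sftgrading}) with $n = 2$, i.e. $|\gamma| = \czm(\gamma) - 1$, the generators sit in degrees
\[
|\gamma_{\min}^j| = 4j - 2, \qquad |\gamma_{\max}^j| = 4j,
\]
so all generators lie in even degrees, and moreover there is exactly one generator in each even degree between $2$ and $4k$. Since the differential $\partial_\pm$ lowers degree by $1$, it must vanish for parity reasons; this uses that Theorem \ref{gluing}(ii) and Remark \ref{goodbad} rule out the bad-orbit obstruction, and here all orbits are in fact elliptic so good. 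Hence the truncated homology is $\Q$ in each even degree in $[2, 4k]$ and $0$ elsewhere.

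To pass to the full invariant, the key point is the proportionality $A \sim 1/\varepsilon$ between action and index: as $\varepsilon_k \to 0$ with $A_k \to \infty$, the truncated chain groups form a directed system whose direct limit is the full cylindrical contact homology of $(S^3, \lambda_{\varepsilon_k})$. Since each truncation has exactly one generator in each even degree $\in [2, 4k]$ and trivial differential, the direct limit has exactly one $\Q$-summand in each even degree $\geq 2$ and vanishes in all other degrees. Finally, invariance of $CH_*$ under the choice of (dynamically separated) contact form and $\A$-compatible $J$, as established in \cite{HN2, HN3, jocompute}, shows this computation is independent of the perturbation data and yields the result for $(S^3, \xi_{std})$.

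The main obstacle I expect is not the parity-vanishing of the differential (which is immediate once the indices are known) but rather the careful bookkeeping needed to verify that the perturbed contact form is dynamically separated up to the action $A_k$, together with justifying the direct-limit construction — specifically, checking that the continuation/inclusion maps between truncated chain complexes are compatible with the differentials and induce the expected isomorphisms on homology in the stable range, so that the colimit really computes the full $CH_*(S^3, \xi_{std})$.
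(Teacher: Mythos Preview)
Your proposal is correct and follows essentially the same approach as the paper: the paper's argument (given in Example~\ref{pre quant}) chooses $H=z$, notes that $\czm(\gamma_p^k)=4k-1+\mathrm{index}_p(H)$ is always odd so the SFT grading is always even, concludes the differential vanishes for parity reasons, and defers the direct-limit and invariance details to \cite{jothesis, jocompute, HN2, HN3}. You have simply spelled out more of the bookkeeping (verifying Conditions (I.i) and (II), writing the gradings $4j-2$ and $4j$ explicitly) than the paper does in its sketch.
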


The resulting differential for different choices of $H$ should behave analogously to the the Morse-Smale differential on the base. 
\end{example}

\begin{example}[Lens space]\em
If  $\pi_1(M)$ is abelian then the $k_i(\baar, \ga)$ form an arithmetic progression because 
\[
\pi_0(\Omega M) = \pi_1(M) / \{ \mbox{conjugacy} \} \cong \pi_1(M).
\]
This applies to the lens space $(L(n+1,n), \xi_{std})$, as each free homotopy class $\baar$ may be represented as an element of $\{ 0, 1, ... n\}$, where 0 represents a contractible class.  As a result, an arbitrary cover of a closed orbit may not be of the same free homotopy class $\baar$.  This will only be the case when the $k_\ell(\baar, \ga)$-th cover is given by 
\[
k_\ell(\baar, \ga) = \ell (n+1)+\baar,\mbox{ for } \baar \neq 0 \mbox{ and } \ell \in \Z_{\geq 0}.
\]
The procedure described in the previous example holds, though some care must be taken in regards to the fact that the base is now a symplectic orbifold.

We note that the Lens spaces $(L(n+1, n), \xi_{std})$ are contactomorphic to the links of the $A_n$ singularities $(L_{A_n},\xi_{A_n})$, with
\[
L_{A_n}:=\{\mathbf{z} \in \C^3 \ | \ z_0^{n+1} + z_1^2+z_2^2 =0 \}\cap S^{5} 
\]
and the canonical contact structure given by
\[
\xi_{A_n}:= T( L_{A_n} )\cap J_0 T( L_{A_n}).
\]
As $(L_{A_n},\xi_{A_n})$ is an example of a Brieskorn manifold, it is well known that $c_1(\xi_{A_n})=0$ \cite{vk}, thus $c_1(\xi_{L(n+1,n)})=0.$  The quotient of $S^3$ with the following cyclic subgroup of $SU(2,\C)$ yields the Lens space $L(n+1,n)$.  This cyclic subgroup is $\Z_{n+1}$, which acts on $\C^2$ by $u \mapsto \varepsilon u, \  v \mapsto \varepsilon^{-1}v$, where $\varepsilon = e^{2\pi i/(n+1)}$, a primitive $(n+1)$-th order root of unity.   The complex volume form $du \wedge dv $ on $\C^2$ can be used to compute the Conley-Zehnder indices associated to Reeb orbits of $S^3$ without local trivializations.  Since $\Z_{n+1} \subset SU(2;C)$, this means that the complex volume form $du \wedge dv $ descends to the quotient, allowing one to compute the Conley-Zehnder indices associated to Reeb orbits of $L(n+1,n)$.  This procedure yields the following formulas for the Conley-Zehnder indices and will be precisely described in the forthcoming paper \cite{jocompute}.

Let $\ga_p$ be the underlying simple orbit over a critical point $p$ of $H$.  For every $\ell \in  \Z_{> 0}$, we obtain a contractible orbit $\ga_p^{\ell(n+1)}$ of index
\begin{equation}
\label{cznoncontractible}
\czm(\ga_p^{\ell(n+1)})=4\ell- 1+\mbox{\emph{index}}_p(H)
\end{equation}

Otherwise for every $\ell \in  \Z_{\geq 0}$ we obtain a noncontractible Reeb orbit $\ga_p^{\ell(n+1)+c}$ in the free homotopy class $c \in \{1, 2, ..., n\}$ of index
\begin{equation}
\label{czcontractible}
\begin{array}{lcl}
\czm(\ga_p^{\ell(n+1)+c})&=&2+ 4 \left \lfloor \frac{\ell(n+1)+c}{n+1}  \right \rfloor - 1+\mbox{\emph{index}}_p(H) \\
&=& 2 + 4 \ell - 1 +\mbox{\emph{index}}_p(H), \\
\end{array}
\end{equation}

When using the height function as in Figure \ref{figlens} the differential vanishes in light of (\ref{cznoncontractible}) and (\ref{czcontractible}), yielding the following theorem.

\begin{theorem}\label{HClens}
The cylindrical contact homology for the lens space $(\lens, \xi_{std})$ is given by
\[
CH_*(\lens, \xi_{std}) = \left\{  \begin{array}{cl}
    \Q^n & *  =0   \\
    \Q^{n+1} & * \geq 2, \mbox{ even } \\
       0 & * \ \mbox{ else } \\
\end{array} \right.
\]
\end{theorem}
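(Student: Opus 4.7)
The plan is to run the prequantization perturbation scheme of Example \ref{pre quant} in the orbifold setting $(S^2/\Z_{n+1},\omega_0)$, using the height function $H=z$ on $S^2$ (which descends to the quotient), then organize the resulting Reeb orbits by free homotopy class and read off the chain groups from the index formulas (\ref{cznoncontractible}) and (\ref{czcontractible}). First I would verify that $\lambda_\varepsilon=(1+\varepsilon h^*H)\lambda_{L_{n+1}}$ is dynamically separated up to action $A_k\sim 1/\varepsilon_k$ for a suitable sequence $\varepsilon_k\to 0$: condition (II) follows because iterating $\ga_p^{k_i(c,\ga)}$ increments $\ell$ by $1$ in (\ref{cznoncontractible})--(\ref{czcontractible}), which adds exactly $4$ to $\czm$; conditions (I.i)--(I.ii) follow because the surviving simple representative in each class is the one with $\ell=1$ (contractible) or $\ell=0$ (noncontractible), whose index is $3+\text{index}_p(H)\in\{3,5\}$ or $1+\text{index}_p(H)\in\{1,3\}$. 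Once this is established, Theorems \ref{conditions} and \ref{conditions2} and the gluing result Theorem \ref{gluing} apply up to action $A_k$, and the filtered chain complex $C_*^{<A_k}(L(n+1,n),\lambda_{\varepsilon_k},H)$ is well defined.

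Next I would enumerate the generators of $C_*^{<A_k}$ by free homotopy class, using the absolute $\Z$-grading $|\ga|=\czm(\ga)+n-3=\czm(\ga)-1$ (since $\dim M=3$) coming from the complex volume form $du\wedge dv$, which descends to the quotient because $\Z_{n+1}\subset SU(2,\C)$. For the contractible class $c=0$ and each $\ell\geq 1$, the formula (\ref{cznoncontractible}) gives generators in degrees $4\ell-2$ (min) and $4\ell$ (max), i.e.\ one generator in each even degree $\geq 2$. For each of the $n$ noncontractible classes $c\in\{1,\dots,n\}$ and each $\ell\geq 0$, the formula (\ref{czcontractible}) gives generators in degrees $4\ell$ (min) and $4\ell+2$ (max), i.e.\ one generator per noncontractible class in every even degree $\geq 0$. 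Adding up: degree $0$ has $n$ generators (the noncontractible minima at $\ell=0$), while each even degree $\geq 2$ has $n+1$ generators (one from the contractible class plus one from each of the $n$ noncontractible classes). All generators lie in even degrees, giving the claimed dimensions.

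I would then check that the differentials $\partial_\pm$ vanish. Since every generator has even grading, any rigid cylinder in $\mathcal{M}^J_0(x;y)$ would need to connect generators whose gradings differ by $1$; no such pair exists, so the count is vacuously zero. This also means no bad orbits appear as generators: a bad orbit would arise as an even cover of a negative hyperbolic simple orbit, but the parity of $\czm$ is constant along each tower in (\ref{cznoncontractible})--(\ref{czcontractible}) (it increments by $4$), so no orbit changes parity under iteration. Hence $H_*(C_*^{<A_k},\partial_\pm)$ equals $C_*^{<A_k}$ itself. Because actions and indices are proportional (both grow linearly in $\ell$), the tower inclusions as $\varepsilon_k\to 0$ are grading-preserving and injective, so the direct limit $\varinjlim_k H_*(C_*^{<A_k})$ gives precisely the homology claimed.

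The main obstacle, and the point where I expect to have to be most careful, is the index computation itself: one must verify that the trivialization coming from $du\wedge dv$ on $\C^2$ is compatible (up to homotopy) with iterated trivializations on each free homotopy class $c\in\{0,1,\dots,n\}$ of $L(n+1,n)$, since noncontractible classes admit no canonical capping. Correctly accounting for the orbifold points of $S^2/\Z_{n+1}$ when pushing the Morse perturbation down, and tracking how the rotation number of the transverse return map changes between $k_i(c,\ga)$ and $k_{i+1}(c,\ga)$, is what produces the uniform $+4$ jump and the shift $2m-1\leq \czm(\ga^{k_1(c,\ga)})\leq 2m+1$ in the dynamically separated condition; full verification of these formulas is deferred to \cite{jocompute}.
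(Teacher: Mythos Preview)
Your proposal is correct and follows precisely the approach the paper sketches: the paper itself only states that ``the differential vanishes in light of (\ref{cznoncontractible}) and (\ref{czcontractible})'' when $H$ is the height function, deferring all details to \cite{jocompute}, and your argument is a faithful expansion of that sketch---organizing generators by free homotopy class, reading off even gradings from the index formulas, and concluding $\partial_\pm=0$ by parity. Your acknowledgment that the verification of the index formulas (compatibility of the $du\wedge dv$ trivialization with iterates in each class, and the orbifold perturbation) is the real content deferred to \cite{jocompute} matches the paper's own position exactly.
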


\end{example}

\subsection{Quandaries of the multiply covered}\label{quandaries}
In this section we explain the breaking configurations for index 2 cylinders, which are the obstructions to a well-defined chain complex.  If Conditions (A)-(D) are met, as in Theorems \ref{conditions} and \ref{conditions2}, then one can show that an index 2 cylinder in $\mhat_1(x;z)$ can only degenerate into a once broken cylinder, as in Figure \ref{limit}. Combined with the gluing results of Theorem \ref{gluing} one is then able to prove that $\pa_\pm^2=0$; see Section 4 of \cite{HNdyn}.
 \begin{figure}[h!]
  \centering
    \includegraphics[width=.25\textwidth]{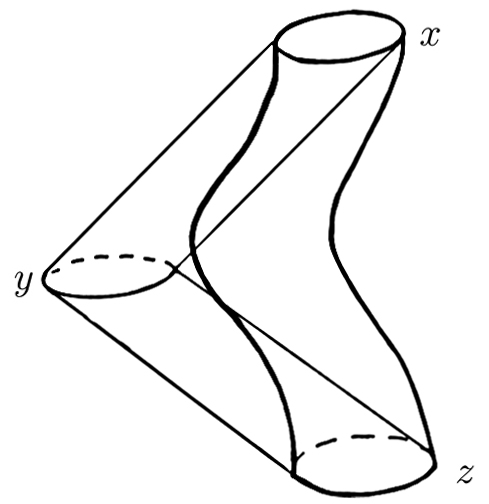}
      \caption{Desired limiting behavior for $u \in \mhat_1(x;z)$, where $y$ satisfies $\czm(y)=\czm(x)-1$. }
      \label{limit}
\end{figure} 

The obstruction to $\pa^2_\pm=0$ stems from the following compactification phenomenon in the symplectization.  The maximum principle, cf. Proposition \ref{max-principle}, implies that the real valued portion of an asymptotically cylindrical curve cannot develop any local maxima of.  However, the general compactness results in \cite{BEHWZ} allow for the real valued portion of an asymptotically cylindrical curve to develop a finite number of minima, as in Figure \ref{min}.   As a result, the compactification of moduli spaces of finite energy cylinders can include buildings\footnote{See Definition \ref{building} for the precise definition of a building.} of arbitrary height, consisting of genus 0 noncylindrical components; see Figure \ref{figbroken}.  In such situations  (\ref{compactness}) no longer holds and one cannot prove $\pa_\pm^2=0$.  Analogous phenomenon also occur in a cobordism, preventing the chain homotopy equations from holding. 

\begin{figure}[ht]
\begin{minipage}[b]{2.9in}
\centering
\includegraphics[width=.45\textwidth]{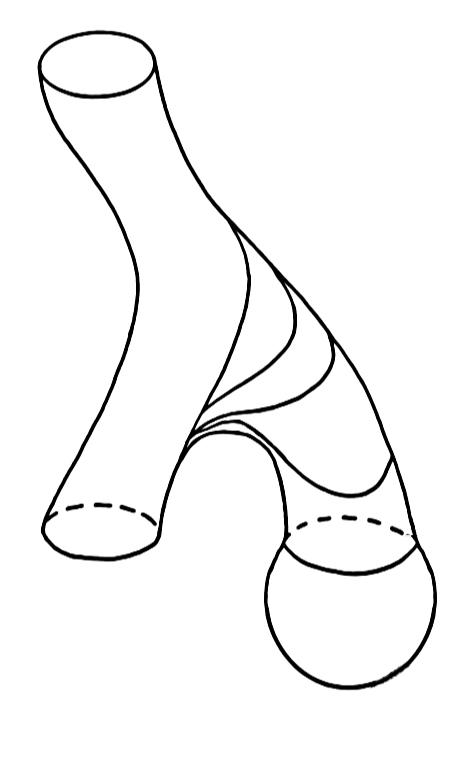}
\caption{Developing a minimum}
\label{min}
\end{minipage}
\begin{minipage}[b]{2.9in}
\centering
\includegraphics[width=.37\textwidth]{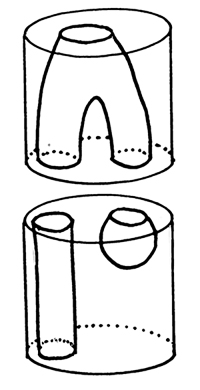}
\caption{A height 2 building, consisting of a pair of pants, plane, \& cylinder.}
\label{figbroken}
\end{minipage}
\end{figure}

Without abstract perturbations or other analytic techniques, the following discussion demonstrates that in order to construct a well-defined cylindrical contact chain complex\footnote{In \cite{EGH}, they only require the exclusion of contractible orbits of degree 1 in order to prove $\pa^2=0$ in Proposition 1.9.1. The exclusion of the contractible orbits of degree 0 and -1 is necessary to obtain the existence of a chain map and the homotopy of homotopies respectively when proving invariance.}, one must further require that all contractible orbits $\ga$ satisfy $|\ga| \geq 2$, e.g. by (\ref{sftgrading}), $\czm(\gamma) \geq 3$ in dimension 3.

\begin{remark} \em
Contractible planes of nonpositive index appear in the symplectization of Brieskorn manifolds of arbitrary dimension; see Remark 3.4 of \cite{vk}.
\end{remark}

In dimension 3, the virtual dimension of the moduli space $\calm := \calm(\gp; \ga_1,...\ga_s)$ is given by $\mbox{vdim }\calm(\gp; \ga_1,...\ga_s) = -(1-s) + \czm(\gp) - \sum_{i=1}^s \czm(\ga_i),$ cf. (\ref{indexformula}).  When regularity for $u \in \calm$ can be achieved then $\ind(u) \geq  1$, where $\ind(u)=\mbox{vdim}(\calm)$, cf. Corollary \ref{regJdim}.  However regularity results have only\footnote{While the result is expected to be true, there appear to be problems with some details of Dragnev's proof in \cite{Dr}, particularly regarding the claimed existence of certain rather special cut-off functions on page 757.  Rather than rely on \cite{Dr} directly, we appeal to  \cite{HWZ3, HT2, Wtrans}. It is not immediately clear whether or not the proof has a simple fix, however a complete proof should follow by modifying arguments in \cite{Btorus, Wtrans, Wnotes}.   A detailed proof of the desired result in the more general setting of stable Hamiltonian structures  appears in Wendl's blog:\\
{\scriptsize https://symplecticfieldtheorist.wordpress.com/2014/11/27/generic-transversality-in-symplectizations-part-1/} \\
{\scriptsize https://symplecticfieldtheorist.wordpress.com/2014/11/27/generic-transversality-in-symplectizations-part-2/} \\
{\scriptsize https://symplecticfieldtheorist.wordpress.com/2014/11/28/an-easy-proof-of-the-pi-du-lemma/} } been rigorously established in \cite{HWZ3}, after a generic choice of $\Jt$, for immersed somewhere injective curves associated to symplectizations of contact 3-manifolds and in limited other cases in \cite{Wtrans}.   

Next we pictorially explain why (D) fails if regularity cannot be achieved.  We work with the numerics of the virtual dimension of $\calm$ instead of $\mhat$ so as to not obscure the elementary properties of addition under consideration\footnote{Otherwise we must keep track of a $-1$ associated to each building component arising from the compactification $\mbar(x;z)$, which is annoying.}.  If $ \mbox{vdim }\calm(x;z) = \czm(x) - \czm(z) =2$, then the building $(u_1,...,u_n) \in\mbar(x;z)$ satisfies $\sum_i \ind(u_i) = 2.$

 \begin{figure}[h!]
  \centering
    \includegraphics[width=.6\textwidth]{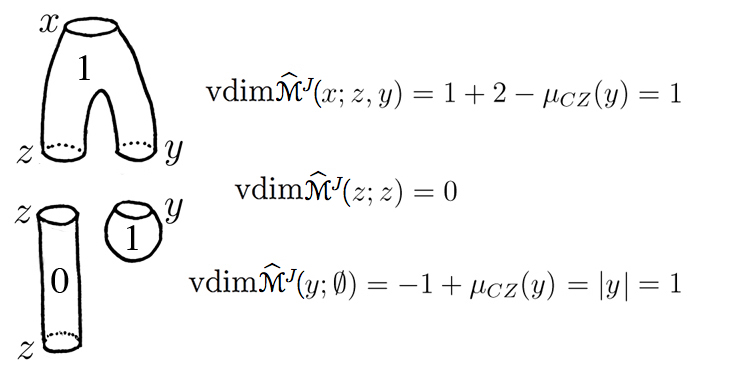}
      \caption{A contractible Reeb orbit $y$ with $\czm(y)=2$ precludes $\pa^2=0$. Here $u \in \calm(z;z)$ is the trivial cylinder and $\calm(y;\emptyset)$ consists of all finite energy planes bounding $y$. Even after a generic choice of $\Jt$, one cannot exclude planes of virtual dimension 1 from appearing.}
      \label{sad1}
\end{figure}

\vfill
\eject

\begin{figure}[h!]
\centering
    \includegraphics[width=.6\textwidth]{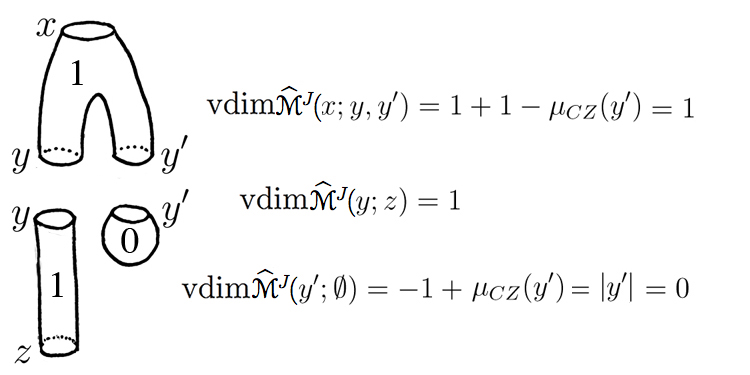}
      \caption{A contractible Reeb orbit $y'$ with $\czm(y')=1$ precludes $\pa^2=0$, as a generic choice of $\Jt$ does not exclude planes of virtual dimension 0 from appearing if $y'$ is multiply covered.}
      \label{sad0}
\end{figure} 

 \begin{figure}[h!]
 \centering
    \includegraphics[width=.6\textwidth]{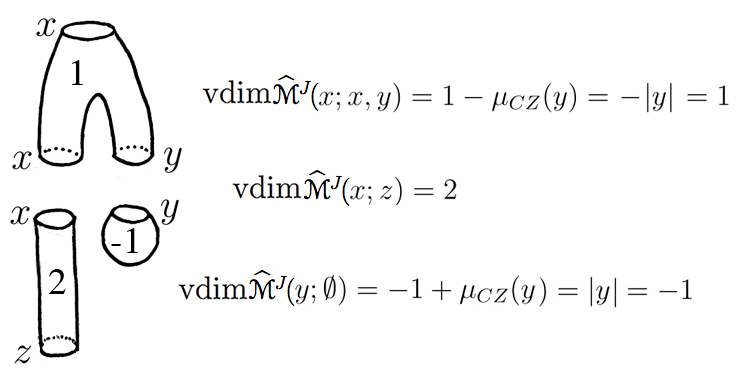}
      \caption{A contractible Reeb orbit $y$ with $\czm(y)=0$ precludes $\pa^2=0$, as a generic choice of $\Jt$ does not exclude planes of virtual dimension -1 from appearing if $y$ is multiply covered.}
      \label{sad-1}
\end{figure}

In addition to the phenomena seen in Figures \ref{sad1}-\ref{sad-1}, there is typically a ``failure of compactness'' whenever the index of $u_i$ is nonpositive, as it is still possible to obtain $\sum_i\mbox{ind }u_i =2$ even when all contractible Reeb orbits have Conley-Zehnder index $\geq3$.  This is demonstrated in the following examples involving the irrational ellipsoid, further underscoring the necessity of Conditions (A)-(D).

\begin{example}[Ellipsoid] \label{ellipsoid} \em
The 3-dimensional ellipsoid is given by $E(a,b):=f^{-1}(1)$,  
\begin{equation}\label{ellipsoideq}
\begin{array}{cccl}
f: &\C^2 &\to & \R \\
& (u, v) & \mapsto & \dfrac{|u|^2}{a} + \dfrac{|v|^2}{b} \\
\end{array}
\end{equation}
and $a,b \in \R_{>0}$.  The standard contact structure for the ellipsoid is
\[
\xi_p=T_pE \cap J_0 (T_pE),
\]
which is the kernel of the 1-form 
\[
\A = - \frac{1}{2} df \circ J_0.
\]
The Reeb vector field associated to $\A$ is
\[
R_\A=\frac{1}{a} \left( u \dfrac{\pa}{\pa u} -  \bar{u} \dfrac{\pa}{\pa \bar{u}} \right) + \dfrac{1}{b} \left( v \dfrac{\pa}{\pa v} -  \bar{v} \dfrac{\pa}{\pa \bar{v}}\right).  
\]
This vector field rotates the $u$-plane at angular speed $\frac{1}{a}$ and the $v$-plane at angular speed $\frac{1}{b}$. If $a/b$ is irrational, there are only two nondegenerate simple Reeb orbits living in the $u=0$ and $v=0$ planes. We denote these by $\ga_1$ and $\ga_2$ respectively.

Their Conley-Zehnder indices are described by
\begin{equation}\label{cz-ellipsoid}
\czm(\ga_i^k) = 2 \lfloor k(1+ \phi_i) \rfloor + 1,
\end{equation}
where $\phi_1=a/b$ and $\phi_2 = b/a$, see \cite{Ylong}.  Note that the Conley-Zehnder indices (\ref{cz-ellipsoid}) never coincide and span all the odd positive numbers as $k \to \infty$.

If we consider the ellipsoid such that $0<a<\frac{1}{2}b$ then $\phi_1< 1/2$ and
\[
\begin{array}{lcl}
\czm(\ga_1) &=& 3 \\
 \czm(\ga_1^{2})&=&5. \\
\end{array}
\]
This means for any $u \in \calm(\ga_1^{2} ; \ga_1, \ga_1)$,  
\[
\ind(u) = 0,
\] 
so the virtual dimension of $\calm(\ga_1^{2} ; \ga_1 , \ga_1)$ is 0.   However this moduli space is never nonempty, since it contains the double branched covers of the trivial cylinder over $\ga_1$, which forms a 2-dimensional family. As a result transversality can not be achieved for this moduli space using ``standard'' techniques of perturbing $J$.  However in Section 3 of \cite{HNdyn} we are able to exclude a building consisting of such an element of $\calm(\ga_1^{2} ; \ga_1 , \ga_1)$ by making use of intersection theory, which allows us to still conclude $\pa^2=0$. \end{example}

We note that certain irrational ellipsoids will be dynamically separated up to a particular action level, as in the following example. However, in order to compute the full cylindrical contact homology from the truncated chain complex one will need methods as in \cite{jothesis, jocompute}, as outlined in Example \ref{pre quant}.
\begin{example}\label{ellipsoiddynsep}\em
If we choose $a,b$ rationally independent such that $\frac{k-1}{k}<\phi_1<1$ then $ 1<\phi_2<1+\frac{1}{k-1}$ and
\[
\begin{array}{lcl}
\czm(\ga_1^k) &=& 4k-1 \\
 \czm(\ga_2^{k})&=&4k+3. \\
\end{array}
\]
As a result such an ellipsoid would be dynamically separated up to the $k$-th iterate of $\ga_1$ and $\ga_2$, which would be up to an action level proportional to $k$.  For example if we take $a=1$, $b=1+\vepsilon$ with an irrational $\vepsilon < 1/(2\cdot 99)$ then $E(1, 1+\vepsilon)$ is dynamically separated up to the $k=100$ iterate.  
\end{example}
The next example demonstrates the obstruction to invariance caused by \textbf{badly iterated cylinders} in a cobordism.  Badly iterated cylinders are multiply covered curves which have index smaller than their underlying cylinder, and cannot be excluded except when working with dynamically separated contact forms.  Details pertaining to invariance with respect to the dynamically separated condition will be given in \cite{jocompute}. 
 
 \begin{example} \label{ellipsoidnoinvariance}\em
Again we take $E(a,b):=f^{-1}(1)$, where 
\begin{equation}\label{ellipsoideq}
\begin{array}{cccl}
f: &\C^2 &\to & \R \\
& (u, v) & \mapsto & \dfrac{|u|^2}{a} + \dfrac{|v|^2}{b} \\
\end{array}
\end{equation}
and $a,b \in \R_{>0}$.  We will consider the following two ellipsoids $E_+:=E(1-\vepsilon, 2+\veps)$ and $E_-:=E(1-\vepsilon, 1+\veps)$, with $f_\pm$ the respective defining plurisubharmonic functions as in (\ref{ellipsoideq}).   Here $\vepsilon$ will be chosen so that $0< \veps << 1$ and in addition both $\frac{1-\vepsilon}{2+\veps}$ and $\frac{1-\vepsilon}{1+\veps}$ are rationally independent.   We equip these ellipsoids with their canonical contact forms, denoted $\A_+$ and $\A_-$, given by
\[
\A_\pm:=-df_\pm \circ J_0,
\]
where $J_0$ is the standard complex structure on $\C^2$.  Moroever the contact forms $\A_+$ and $\A_-$ are nondegenerate provided $\veps$ has been chosen as above. Note that $(E_+, \A_+)$ and $(E_-, \A_-)$ are contactomorphic by Gray's Stability theorem\footnote{The smooth one parameter family of diffeomorphic contact manifolds can be obtained via the flow of the Liouville vector field $u \frac{\pa}{\pa u} + v \frac{\pa}{\pa v}$ on $\C^2 \setminus \{ 0 \}$, which has been appropriately reparametrized so that at time 0 one starts from $E_-$ and lands on $E_+$ at time 1.}.  

As before, each of the Reeb vector fields $R_{\A_\pm}$ admit exactly two nondegenerate simple Reeb orbits living in the $u=0$ and $v=0$ planes.  Denote the two orbits associated to $R_{\A_+}$ by $\delta_1$ and $\delta_2$ and the two orbits associated to $R_{\A_-}$ by $\gamma_1$ and $\gamma_2$. From (\ref{cz-ellipsoid}) we can obtain the following grading on our Reeb orbits.  
For the Reeb orbits $\delta_1$ and $\delta_2$ associated to $E_+:=E(1-\vepsilon, 2+\veps)$ we have
\[
\begin{array}{ccccc}
|\delta_1| = 2, & |\delta_1^2| = 4, & |\delta_2| = 6, & |\delta_1^3| = 8, & ...  \\
\end{array}
\]
For the Reeb orbits $\ga_1$ and $\ga_2$ associated to $E_-:=E(1-\vepsilon, 1+\veps)$ we have
\[
\begin{array}{ccccc}
|\ga_1| = 2, & |\ga_2| = 4, & |\ga_1^2| = 6, & |\ga_2^2|=8, & ...  \\
\end{array}
\]

Consider the \textbf{strong cylindrical cobordism} $(X, \omega)$ defined from $(E_1, \A_1)$ to $(E_2, \A_2)$, which is defined to be a compact symplectic 4 manifold, which is not necessarily exact, with oriented boundary
\[
\partial X = E_- - E_+
\]
such that
\[
\omega\arrowvert_{E_+} = d\A_+ \mbox{ and } \omega\arrowvert_{E_-} = d\A_-.
\]
One may \textbf{complete} $(X, \omega)$ by attaching infinite cones at either end, namely
\[
W:=V_- \ \cup_{E_-} X \ \cup_{E_+}  V_+,
\]
where $(V_-,J) \cong ((-\infty, 0] \times E_-, \Jt_-)$ and   $(V_+,J) \cong ([0,\infty) \times E_+, \Jt_+)$.
The 4-manifold $W$ admits a global symplectic form and a compatible almost complex structure which overlaps with those on $(X, \omega)$ and the conical ends, as it is an example of a stable Hamiltonian structure. We consider finite energy cylinders 
\[
u: (\R \times S^1, j_0) \to (W, \Jt)
\]
limiting on a nondegenerate Reeb orbits of $(E_+, \A_+)$ at $+\infty$ and on a nondegenerate Reeb orbits of $(E_-, \A_1)$ at $-\infty$. 

Supposing the compactness issue in Example \ref{ellipsoid} can be overcome as in \cite{HNdyn}, since the grading on the Reeb orbits is always even for both $(E_+, \A_+)$ and $(E_-, \A_-)$ one might heuristically expect that the cylindrical contact homology of each is equal to its chain complex, by index reasons.  As the two contact manifolds are contactomorphic, one would expect $HC_*(E_+, \A_+, J_+) \cong HC_*(E_-, \A_-, J_-)$. 

However, the usual proof of invariance involving the construction of a chain homotopy fails because of the following compactness issue. We have
\begin{equation}\label{czinv}
\begin{array}{lr}
\mbox{only $\delta_1$ and $\ga_1$ satisfy} & |\delta_1|=|\ga_1|=2; \\
 \mbox{only $\delta_2$ and $\ga_1^2$ satisfy} & |\delta_2|=|\ga_1^2|=6.\\ 
\end{array}  
\end{equation}
For a curve $u \in \mhat_0(\delta_1; \ga_1)$ we can consider its 2-fold unbranched cover $v \in \mhat_0( \delta_1^2;\ga_1^2)$, which has ${\mbox{ind}}(v)=-2$.  As in Example \ref{ellipsoid}, the moduli space $\mhat_0( \delta_1^2;\ga_1^2)$ is never nonempty since it contains the double unbranched covers of $u$.

 But now the elements $v \in \mhat_0( \delta_1^2;\ga_1^2)$  cannot be excluded from appearing in compactification of $\mhat_0(\delta_2; \ga_1^2)$ because the curve $u\in \mhat_0(\delta_2;\ga_1^2)$ can break along $\delta_1^2$ in the upper portion of the cobordism $W$, the $(\R^+ \times E_+)$-component, as illustrated below in Figure \ref{ellipsoidsucks}.  Such a breaking obstructs the usual chain homotopy scheme for proving invariance.
 
 Even more curious is that the relative adjunction formula, cf. Section 4.4 \cite{Hu2}, implies that there can be no cylinder $u \in \mhat_0(\delta_2;\ga_1^2)$, hence the contribution to the chain map going from $\delta_2$ to $\ga_1^2$ must include a broken curve including the index -2 cylinder $v$. 
 
To see why this calculation holds, we will need several of the results pertaining to the asymptotics of pseudoholomorphic curves, found in Section 3.1 of \cite{HNdyn} and Section 4.4 of \cite{Hu2}.  We will only state what is necessary to prove that $\mhat_0(\delta_2;\ga_1^2)$ is empty, and refer the inquisitive reader to the detailed explanations in the above references.   

Let $\gamma$ be an embedded Reeb orbit and $N$ be a tubular neighborhood of $\gamma$. We can identify $N$ with a disk bundle in the normal bundle to $N$, and also with $\xi|_\gamma$. Let $\zeta$ be a braid in $N$; this is defined to be a link in $N$ such that that the tubular neighborhood projection restricts to a submersion $\zeta \to \gamma$.  A trivialization $\Phi$ of $\xi|_\ga$ gives rise to the notion of writhe, $w_\Phi(\zeta) \in \Z.$ The writhe is computed by using the trivialization $\Phi$ to identify $N$ with $S^1 \times D^2$ and then projecting $\zeta$ to an annulus and counting crossings of the projection with (nonstandard) signs. See Section 2.6 of \cite{Hu2} and  Section 3.3 of \cite{Hu} for further details.

Let $u$ be a pseudoholomorphic curve in $(W,J)$ with a positive end at $\gamma^d$ which is not part of a multiply covered component. Corollaries 2.5 and 2.6 of \cite{Sief} show that if $R$ is sufficiently large, then the intersection of the positive end of $u$ with $\{R\} \times N \subset \{R\} \times M$ is a braid $\zeta$, whose isotopy class is independent of $R \in \R^+$.  The same process holds symmetrically for a negative end of $u$.  

The proof of the relative adjunction formula in \cite{Hu2} yields the following result, analogous to Lemma 3.5 of \cite{HNdyn} for  $u \in \mhat_0(\delta_2;\ga_1^2)$,
 \begin{equation}\label{adjunctionfun}
w_\Phi(\zeta_+)-w_\Phi(\zeta_-) = 2\Delta(u) \geq 0
\end{equation}
where $\zeta_+$ is the braid obtained by the intersection of the positive end of $u$ with $\{R+\}\times N^+$, $\zeta_-$ is the braid obtained by the intersection of the negative end of $u$ with $\{R+\}\times N^-$, $w_\Phi(\zeta_\pm)$ is the writhe of $\zeta_\pm$, and $\Delta(u)$ is a count of the singularities of $u$ with positive integer weights in $(W,J)$.  Lemma 4.16 of \cite{Hu2} yields
\[
w_\Phi(\zeta_+) \leq 0,
\]
while Lemma 3.4(b,d) of \cite{HNdyn} yields
\[
w_\Phi(\zeta_-) \geq \mbox{wind}(\zeta_-) \geq \lceil \czm(\ga_1^2)/2\rceil= 3.
\]
Thus
\[
w_\Phi(\zeta_+)-w_\Phi(\zeta_-) \leq -3
\]
contradicting (\ref{adjunctionfun}).

 \begin{figure}[h!]
  \centering
    \includegraphics[width=0.5\linewidth]{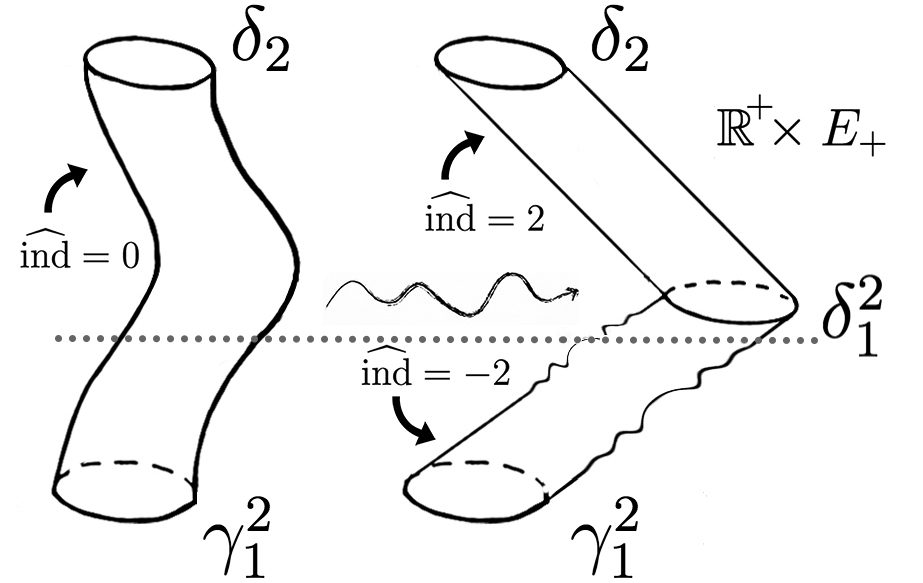}
      \caption{A failure of compactness in a cobordism, obstructing the usual proof of invariance. }
      \label{ellipsoidsucks}
\end{figure} 

 \end{example}

\noindent \textbf{Acknowledgements}. I thank Mohammed Abouzaid, Kai Cieliebak, Helmut Hofer, Michael Hutchings, Dusa McDuff, Joel Fish, Alex Oancea, Katrin Wehrheim, and Chris Wendl for their interest in my work and our assorted insightful discussions.  I would especially like to thank Mohammed Abouzaid for being a wonderful and generous advisor as well as his comments on my thesis, from which this paper has been extracted. I am very grateful to Michael Hutchings for showing me example \ref{ellipsoidnoinvariance} and our discussions of index calculations which gave rise to many of the results in Section \ref{regularity}; these generalized those that appeared in my thesis and spawned our joint projects \cite{HNdyn, HN2, HN3}. Special thanks are due to Michael Hutchings, Janko Latschev, Dusa McDuff,  Andrew McInerney, and the referee for their helpful comments on this paper.   \\

\section{The letter  $J$ is for pseudoholomorphic}\label{freddie}
 In the world of contact homology we will primarily consider pseudoholomorphic curves interpolating between closed nondegenerate Reeb orbits in the symplectization or in a strong symplectic cobordism of a contact manifold.  These curves were first used by Hofer \cite{H1} in this context to prove the Weinstein conjecture for $S^3$.   As many of the results and definitions appear scattered across literature, this section is meant as a survey of the properties of pseudoholomorphic curves appearing in the study of contact homology and may be skipped by the expert.

We begin by precisely stating some basic notions from contact geometry.

\subsection{Pseudoholomorphic curves in symplectizations}\label{pseudsymp}
 Let $(M, \A = \ker \xi)$ to be a contact manifold.  The \textbf{symplectization} of $(M,\A)$ is given by the manifold  $\R \times M$ and symplectic form
\[
\omega = e^\tau(d\A - \A \wedge d\tau) = d (e^\tau\alpha).
\]
Here $\tau$ is the coordinate on $\R$, and it should be noted that $\A$ is interpreted as a 1-form on $\R \times M$, as we identify $\A$ with its pullback under the projection $\R \times M \to M$.  

The other required component of any \curve \ theory is  the notion of an almost complex structure. Recall that any contact structure $\xi$ may be equipped with a complex structure $\bar{J}$ such that $(\xi, \bar{J})$ is a complex vector bundle.

We denote the set of \textbf{compatible almost complex structures} on $\xi$ by
 \[
 \J=\{ \bar{J} : \xi \to \xi \ | \ \bar{J}^2 = -\id, \ d\A(\bar{J}\cdot,\bar{J}\cdot)=d\A(\cdot,\cdot), \ d\A(\cdot, \bar{J}\cdot) > 0 \}.
 \]
 This set is nonempty and contractible, as in the symplectic case which is discussed in \cite{MS1}.  Thus $(\xi, d\alpha, \bar{J})$ is a \textbf{symplectic vector bundle},\footnote{Isomorphism classes of symplectic vector bundles are in a 1-1 correspondence with complex vector bundles.  As a result $(\xi, \bar{J})$ is frequently said to be a \emph{complex vector bundle}, and one suppresses the `almost' in almost complex structure despite the fact that we do not require elements of $\J$ to be integrable.} which admits a Hermitian structure.   There is a unique canonical extension of the almost complex structure $\bar{J}$ on $\xi$ to an $\R$-invariant almost complex structure $\Jt$ on $T(\R \times M)$, whose existence is due to the splitting,
\begin{equation}
\label{decomp}
T(\R \times M) = \R \ptau \oplus \R R_{\A} \oplus \xi.
\end{equation}

\begin{definition}[Canonical extension of $\bar{J}$ to $\Jt$ on $T(\R \times M)$]\label{complexstruc}\em
Let $[a,b;v]$ be a tangent vector where $a, \ b \in \R$ and $v \in \xi$.  We can extend $\bar{J}: \xi \to \xi$ to $\Jt: T(\R \times M) \to T(\R \times M)$ by
\[
\Jt[a,b;v] = [-b,a,\bar{J}v].
\]
Thus $\Jt|_\xi = \bar{J}$ and $\Jt$ acts on $\R \ptau \oplus \R R_{\A}$ in the same manner as multiplication by $i$ acts on $\C$, namely $\Jt \ptau = R_{\A}$. If $\bar{J}$ is compatible with $d\A$ on $\xi$ then $\Jt$ is said to be \textbf{$\A$-compatible}.  \end{definition}

\begin{remark}\em
Note that all $\A$-compatible $\Jt$ are invariant under the external $\R$-action on the symplectization $(\R \times M, d(e^\tau \A))$, and compatible with $d(e^\tau \A)$ by construction.
\end{remark}

Let $(\Sigma, j)$ be a closed Riemann surface and $\g:=\{x, y_1,...y_s\} \subset \Sigma$ be a set of points, which are the punctures of $\ds:=\Sigma \setminus \g$. We denote by $u:=(a,f): (\ds, j) \to (\R \times M, \Jt)$ a \murve \ in the symplectization of $(M,\A)$.  
 For nondegenerate closed Reeb orbits $\gamma,\gamma_1,...\gamma_s$ of periods $T, T_1,...,T_s$, as in the introduction we denote $\M(\gamma;\gamma_1,...\gamma_s) $ to be the \textbf{moduli space of genus 0 asymptotically cylindrical \curves}, with one positive puncture and $s$ negative punctures.  An asymptotically cylindrical pseudoholomorphic curve is an equivalence class of asymptotically cylindrical pseudoholomorphic maps as defined in equations (\ref{ass1}) and (\ref{ass2}).  The equivalence relation is given as follows.   

Denote the finite set of punctures by $\g := \{ x, y_1,...,y_s \}$ which we assume has been ordered into positive and negative punctures, $\g_+:=\{x\}$ and $\g^-:=\{y_1,..y_s\}$ respectively.  An equivalence class $(\Sigma, j, \g, u) \sim (\Sigma', j', \g', u')$ of asymptotically cylindrical pseudoholomorphic maps, $[(\Sigma, j, \g, u)]$, is determined whenever there exists a biholomorphism $\phi: (\Sigma, j) \to (\Sigma', j')$ taking $\g$ to $\g'$ with the ordering preserved, i.e. $\phi(\g_+) =\g'_+$ and $\phi(\g_-) =\g'_-$, such that $u = u' \circ \phi.$
Since $u$ determines $\Sigma$ and $\Gamma$ uniquely we may use notation $u:=(a,f) \in \calm(\gamma;\gamma_1,...\gamma_s)$ to refer to an asymptotically cylindrical pseudoholomorphic curve.

Since $\Jt$ is $\R$-invariant, $\R$ acts on these moduli spaces by \textbf{external translations}
\[
u=(a,f) \to (a + \rho, f),
\]
and we denote the quotient by $\mhat(\gamma;\gamma_1,...\gamma_s):=\calm(\gamma;\gamma_1,...\gamma_s)/\R.$  Before we can define the area and the energy of a pseudoholomorphic curve, we need some results pertaining to the local behavior of solutions to the Cauchy-Riemann equations.

\subsection{Local behavior}\label{local}
The use of local coordinates provides a proof of a maximum principle as well as some other helpful identities.   Let $X$ be a local nowhere vanishing vector field on $\ds$ and define $Y= J \circ X$, yielding a local frame.  Because $\ds$ is a closed Riemannian surface with at least one point removed, we can always find a Hermitian trivialization of $T\ds$.  As a result we may think in terms of a global frame $\{X, Y\}$. 

\begin{remark}\label{global} \em
While there exists a global Hermitizan trivialization of $T\ds$ ensuring that $X$ and $Y$ are globally non-vanishing, we cannot a priori conclude that they come from a coordinate system on $\C$. However, $\{X, Y\}$ is a global frame so the dual of the polyvector field $X \we Y$ yields the global 2-form,
\[
\Omega_{\{X,Y\}} = (X \we Y)^*.
\]
\end{remark}

For the purposes of this section it is preferable to work locally with coordinate $s+it \in \C$ by associating $X$ with $\pas$ and $Y$ with $\pat$.  However in later sections, the above remark will allow us to work with a general global frame.

Recall the projection $\pi$ of the tangent bundle of $M$ along the Reeb vector field, $
\pi: TM \to \xi.$  Then 
\[
u_s:= Du \circ \pas : \Sigma \to T(\R \times M)  \cong \R \frac{\pa}{\pa \tau} \oplus \R R_{\A} \oplus \xi \\
\]
can be written as
\[
u_s(z) = [ a_s(z), \A (f_s(z)); \pi ( f_s(z)) ],
\]
where $u = (a,f)$ and $f_s=Df \circ \pas$.  Similarly we have
\[
u_t(z) := Du \circ \pat = [a_t, \A (f_t); \pi( f_t)].
\]
Locally we have that $u=(a,f)$ is pseudoholomorphic if and only if
\begin{equation}\label{loc1}
u_s + \Jt u_t = 0,
\end{equation}
which is equivalent to
\begin{equation}\label{loc2}
  \left\{
  \begin{array}{lcl}
  a_s& = & \A( f_t), \\
 a_t &=&-\A (f_s) \\
 \pi (f_t) &=& J \pi (f_s) \\
   \end{array} \right. 
\end{equation}

\begin{proposition}[Maximum principle]\label{max-principle} If the real valued portion $a$ of a \murve \ $u:=(a,f)$ assumes a local maximum in the interior of $\ds$ then $u$ is the constant map.
\end{proposition}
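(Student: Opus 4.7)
The plan is to show that, in the local coordinates $s+it$ introduced above, the function $a$ is subharmonic, and then invoke the strong maximum principle together with unique continuation for pseudoholomorphic maps.

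First, working in a local chart around an interior point and using the system (\ref{loc2}), I would compute
\[
\Delta a = a_{ss} + a_{tt} = \pas \A(f_t) - \pat \A(f_s).
\]
Expanding $\A = \sum_i \A_i\, dx^i$ in local coordinates on $M$ and differentiating, the second-order terms $\A_i(f)\pa_s\pa_t f^i$ cancel, leaving exactly
\[
\Delta a = \sum_{i,j} (\pa_j \A_i)(f)\bigl(\pa_s f^j\,\pa_t f^i - \pa_t f^j \,\pa_s f^i\bigr) = d\A(f_s, f_t) = (u^* d\A)(\pas,\pat).
\]
Using the splitting (\ref{decomp}), write $f_s = -a_t R_\A + \pi f_s$ and $f_t = a_s R_\A + \pi f_t$. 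Since $\iota_{R_\A} d\A = 0$, only the $\xi$-components contribute, so $d\A(f_s,f_t) = d\A(\pi f_s, \pi f_t)$. By the third equation of (\ref{loc2}), $\pi f_t = \bar J \pi f_s$, and compatibility of $\bar J$ with $d\A$ on $\xi$ gives
\[
\Delta a = d\A(\pi f_s, \bar J \pi f_s) = |\pi f_s|_{\bar J}^{\,2} \;\geq\; 0.
\]
Thus $a$ is a subharmonic function on $\ds$.

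Now suppose $a$ attains a local maximum at an interior point $z_0 \in \ds$. By the strong maximum principle for subharmonic functions, $a$ is constant on the connected component of $\ds$ containing $z_0$; without loss of generality, $\ds$ is connected, so $a \equiv c$ there. Consequently $a_s \equiv a_t \equiv 0$, and from (\ref{loc2}) this forces $\A(f_s) = \A(f_t) = 0$, i.e.\ the Reeb components of $f_s$ and $f_t$ vanish, so $f_s = \pi f_s$ and $f_t = \pi f_t$. Moreover $\Delta a = 0$ combined with the identity $\Delta a = |\pi f_s|_{\bar J}^{\,2}$ yields $\pi f_s = 0$, hence $f_s \equiv 0$, and therefore also $f_t = \bar J \pi f_s = 0$. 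Consequently $u$ is locally constant.

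Finally, to promote the local statement to a global one, I would invoke unique continuation for the nonlinear Cauchy--Riemann operator: a $\Jt$-holomorphic map that is constant on a nonempty open subset of a connected Riemann surface is constant everywhere (this follows from the standard Aronszajn-type unique continuation argument for $\bar\pa_{j,J}$, or directly from the fact that, in local holomorphic coordinates around any point of the image, the difference $u - u(z_0)$ satisfies a linear elliptic system with appropriate vanishing conditions). Hence $u$ is the constant map on all of $\ds$. The main technical input is the subharmonicity computation $\Delta a = |\pi f_s|_{\bar J}^{\,2}$; everything else is either a standard maximum principle or a standard unique continuation fact for pseudoholomorphic maps.
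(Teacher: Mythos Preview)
Your proof is correct and follows essentially the same approach as the paper: both establish $\Delta a = |\pi f_s|_{\bar J}^{2} \geq 0$ from the local equations (\ref{loc2}) and then invoke the strong maximum principle for subharmonic functions. You actually go a bit further than the paper by explicitly deducing $du \equiv 0$ once $a$ is constant (the paper leaves this step implicit); note that since the strong maximum principle already gives $a$ constant on the entire connected domain, your argument yields $du \equiv 0$ globally, so the final appeal to unique continuation is unnecessary.
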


\begin{proof}
The first two equations of (\ref{loc2}) can be written as 
\begin{equation}\label{loc3}
f^*\A =  -da \circ j = *d\A, \ \mbox{where $*$ denotes the Hodge star operator}.
\end{equation}
The third equation of (\ref{loc2}) means that the map $\pi \circ df: T\Sigma \to \xi$ is complex linear on each fiber, hence $\pi \circ Df(z)$ is either zero or an isomorphism. Differentiating (\ref{loc2}) yields
\begin{equation}\label{loc3}
f^* d\A = - d( da \circ j) = \Delta a \ ds \we dt, \ \ \ \mbox{where } \Delta a = a_{ss} + a_{tt}.
\end{equation}
Therefore,
\[
\Delta a = f^* d\A\left( \pas, \pat \right ) = d\A(f_s, f_t) = d\A(\pi f_s, J \pi f_s) = |\pi u_s|^2 = |\pi u_t|^2,
\]
as $d\A(\cdot, J \cdot)$ defines a metric on $\xi$. Hence the function $a$ is subharmonic.   As a result of the strong maximum principle, see for example \cite{HK}, we obtain the desired maximum principle, applicable to \murves \ in symplectizations.
\end{proof}

\subsection{Energy and area}\label{area-energy}
The quantities referred to as the \textbf{area} and (Hofer) \textbf{energy} of a \murve \ were introduced in \cite{HWZ1, HWZsmallarea}.  The first serves as a substitute for the notion of area for pseudoholomorphic maps in  closed symplectic manifolds. The finiteness of the latter provides a relationship between the asymptotic behavior of the \murve \ and the Reeb dynamics of a (nondegenerate) contact manifold.  They are defined as follows.  

\begin{definition} \em
The \textbf{area} of the \murve \ $u$ is given by the formula
\begin{equation}\label{area}
A(u) := \int_{\ds} u^*d\A = \int_{\ds} f^*d\A.
\end{equation}
\end{definition}
\begin{remark}\em
In some literature the area is called the $\om$-energy or $d\A$-energy where $\om=d(e^\tau\A)$ is the symplectic form on the symplectization of $(M, \A)$.  In early literature on contact homology this was referred to simply as energy.  We will not use these conventions and refer to this as the area and denote it by $A(u)$.   
\end{remark}

Based on the discussion of the local behavior in the preceding section we have the following result regarding the non-negativity of area.

\begin{proposition}[Non-negativity of area]\label{posarea}
For any finite area \murve \ $u$ we have
\[
A(u) := \int_{\ds} f^*d\A \geq 0
\]
\end{proposition}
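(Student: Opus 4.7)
The plan is to show that the $2$-form $f^{*}d\alpha$ on $\dot{\Sigma}$ is pointwise non-negative with respect to the orientation induced by $j$, and then integrate. The key inputs are the third Cauchy-Riemann equation from (\ref{loc2}), together with the fact that $d\alpha$ vanishes on the Reeb direction and restricts to a symplectic form on $\xi$ that is tamed by $\bar{J}$.

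First I would work locally in a holomorphic chart $(s,t)$ with frame $\{\partial_s,\partial_t\}$ as in Section \ref{local}. Using the splitting $TM = \R R_\alpha \oplus \xi$, I write $f_s = \alpha(f_s) R_\alpha + \pi f_s$ and similarly for $f_t$. Since $\iota_{R_\alpha}d\alpha = 0$, the Reeb components drop out of the pairing, giving
\[
f^{*}d\alpha\bigl(\partial_s,\partial_t\bigr) \;=\; d\alpha(f_s,f_t) \;=\; d\alpha(\pi f_s,\pi f_t).
\]
Next I invoke the third equation of (\ref{loc2}), namely $\pi f_t = J \pi f_s$, to obtain
\[
f^{*}d\alpha\bigl(\partial_s,\partial_t\bigr) \;=\; d\alpha\bigl(\pi f_s,\, J\pi f_s\bigr) \;=\; |\pi f_s|^{2}_{\,J} \;\geq\; 0,
\]
where the last equality uses that $d\alpha(\cdot,J\cdot)$ defines a Riemannian metric on $\xi$ by the compatibility of $J\in\mathcal{J}$ with $d\alpha$. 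Hence, locally, $f^{*}d\alpha$ is a non-negative multiple of $ds\wedge dt$, which is the positive volume form associated to $j$.

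To conclude globally, I would appeal to Remark \ref{global}: although $\dot{\Sigma}$ need not admit a single global holomorphic coordinate, it does admit a global frame $\{X,Y\}$ with $Y = jX$, and the dual $2$-form $\Omega_{\{X,Y\}} = (X\wedge Y)^{*}$ is a positive volume form compatible with the orientation of $(\dot\Sigma,j)$. The same computation with $\partial_s,\partial_t$ replaced by $X,Y$ shows $f^{*}d\alpha = |\pi f\circ X|^{2}_{J}\,\Omega_{\{X,Y\}}$ pointwise on $\dot\Sigma$. Integrating this non-negative quantity gives $A(u) = \int_{\dot\Sigma} f^{*}d\alpha \geq 0$, and the finite-area hypothesis guarantees the integral is well-defined. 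There is no real obstacle here, since all the work was already done in Section \ref{local}; the only mildly delicate point is confirming that the local non-negativity passes to a globally meaningful statement, which is handled by the global frame observation of Remark \ref{global}.
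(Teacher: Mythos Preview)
Your proof is correct and follows essentially the same approach as the paper: compute locally that $f^{*}d\alpha = |\pi f_s|^{2}\,ds\wedge dt$ using the third Cauchy-Riemann equation from (\ref{loc2}) together with the compatibility of $J$ with $d\alpha$, then globalize via the frame $\{X,Y\}$ from Remark~\ref{global}. The paper's own proof is slightly terser, simply citing the local computation already carried out in the proof of Proposition~\ref{max-principle} (where $f^{*}d\alpha = \Delta a\,ds\wedge dt$ and $\Delta a = |\pi f_s|^{2}$), but the content is identical.
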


\begin{proof}
The local computations (\ref{loc3}) allow us to write
\[
f^*d\A \geq 0 = |\pi u_s|^2 ds \we dt = |\pi u_t|^2 ds \we dt,
\]
where $\pi:TM \to \xi$ is the projection of the tangent bundle of $M$ along the Reeb vector field. The same process works in global coordinates by making use of the global frame $\{X, Y\}$ as discussed in Remark \ref{global}.  
\end{proof}

Combined with the local computations of (\ref{loc2}) we obtain the following corollary.

\begin{corollary}\label{noarea}
If a pseudoholomorphic cylinder $u:=(a,f)$ has $A(u)=0$ then the image $f(\ds)$ is contained in a trajectory of the Reeb vector field $R_{\A}$. 
\end{corollary}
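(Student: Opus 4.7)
The plan is to push the non-negativity of $f^*d\A$ from Proposition \ref{posarea} down to a pointwise vanishing statement, and then read this off geometrically. First I would invoke the local identity
\[
f^*d\A = |\pi \circ f_s|^2 \, ds \we dt
\]
established in the proof of Proposition \ref{posarea} (and extended globally via the frame $\{X,Y\}$ of Remark \ref{global}). Since $A(u) = \int_{\ds} f^*d\A = 0$ and the integrand is everywhere non-negative, it must vanish identically. Hence $\pi \circ f_s \equiv 0$ on $\ds$.

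Next, the third equation of (\ref{loc2}) yields $\pi \circ f_t = J(\pi \circ f_s) \equiv 0$ as well. Combined with the first two equations of (\ref{loc2}), this gives
\[
f_s = \A(f_s)\, R_\A(f) = -a_t\, R_\A(f), \qquad f_t = \A(f_t)\, R_\A(f) = a_s\, R_\A(f),
\]
so $Df$ takes values in the one-dimensional line $\R R_\A(f) \subset TM$ at every point of $\ds$.

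The final step is a connectedness argument showing that $f(\ds)$ is contained in a single integral curve of $R_\A$. I would fix $z_0 \in \ds$, let $\Gamma$ be the integral curve of $R_\A$ through $f(z_0)$, and consider $E := \{z \in \ds \mid f(z) \in \Gamma\}$. This set is non-empty and closed by continuity of $f$; for openness, near any $z \in E$ I would apply the flow-box theorem for $R_\A$ to straighten $\Gamma$ locally and use the tangency $Df \in \R R_\A(f)$ to conclude that $f$ stays on $\Gamma$ in a neighborhood of $z$. Since $\ds = \R \times S^1$ is connected, this forces $E = \ds$.

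The main, and only mildly subtle, obstacle will be handling points where $Df$ vanishes: there $f$ is merely locally constant, but its value still lies on an integral curve through itself, so openness of $E$ is preserved. This is also why the conclusion is formulated as containment of $f(\ds)$ in a trajectory rather than $f$ parametrizing one.
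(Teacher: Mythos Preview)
Your proposal is correct and follows exactly the line the paper intends: the paper simply states the corollary as following from the non-negativity of the integrand in Proposition \ref{posarea} combined with the local identities (\ref{loc2}), and Remark \ref{noarea1} records the equivalence $A(u)=0 \iff \pi u_X = \pi u_Y = 0$. You have fleshed out the connectedness argument that the paper leaves implicit, but the underlying approach is the same.
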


\begin{remark}\label{noarea1}\em
 $A(u)=0$ is equivalent to $\pi u_X =\pi u_Y = 0.$
\end{remark}

To define the \textbf{Hofer energy} of a \murve \ $u$, denoted by $E(u)$, we need to introduce a class of smooth maps, which will be used extend the contact form $\A$ on $M$ to a 1-form on $\R \times M$.    Let
\begin{equation}\label{extend1form}
 \mathscr{S} = \{ \phi \in \Ci(\R, [0,1]) \ | \ \phi' \geq 0\}
\end{equation}
and define for $\phi \in  \mathscr{S}$ the 1-form $\A_\phi$ on  $\R \times M$ by:
\[
\A_\phi(\tau, p)(\rho, v) := \phi(\tau)\A_p(v) \mbox{ for } (\rho, v) \in T_{(\tau, p)}(\R \times M).
\]

\begin{definition} \label{hofer-energie} \em 
The \textbf{Hofer energy} of $u$ is given by
\[
E(u):=\sup_{\phi \in \mathscr{S}} \int_{\dot{\Sigma}} u^*d\A_\phi.
\]
\end{definition}
The Hofer energy of a \curve \ is also referred to as \textbf{$\A$-energy}.  In this paper we will refer to it simply as \textbf{energy}, which is also typical. 

\begin{proposition}[Non-negativity of energy]\label{posarea}
For any \murve \ $u$,
\[
E(u) := \sup_{\phi \in \mathscr{S}} \int_{\dot{\Sigma}} u^*d\A_\phi \geq 0.
\]
\end{proposition}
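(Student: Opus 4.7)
The plan is to proceed in close parallel with the proof of non-negativity of area (Proposition \ref{posarea} as stated for area), exploiting the explicit formula for $\A_\phi$ together with the local pseudoholomorphic identities (\ref{loc2})--(\ref{loc3}) to show that $u^*d\A_\phi$ is pointwise a non-negative 2-form for every admissible $\phi$.

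First I would unpack $\A_\phi$. By the defining formula $\A_\phi(\tau,p)(\rho,v)=\phi(\tau)\A_p(v)$, we may write $\A_\phi=\phi(\tau)\,\A$, where on the right $\A$ denotes the pullback of the contact form along the projection $\R\times M\to M$ (so it has no $d\tau$ component). Taking exterior derivative and using the Leibniz rule gives
\[
d\A_\phi \;=\; \phi'(\tau)\, d\tau\wedge\A \;+\; \phi(\tau)\, d\A.
\]
Pulling back along $u=(a,f)\colon\ds\to\R\times M$ yields
\[
u^*d\A_\phi \;=\; \phi'(a)\, da\wedge f^*\A \;+\; \phi(a)\, f^*d\A.
\]

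Next I would show each summand is a non-negative 2-form. For the second summand, Proposition \ref{posarea} (and its proof) shows that $f^*d\A=|\pi u_s|^2\, ds\wedge dt\geq 0$ pointwise; since $\phi\geq 0$, the term $\phi(a)\,f^*d\A$ is non-negative. For the first summand, I would invoke the first two equations of (\ref{loc2}), which in local coordinates read $a_s=\A(f_t)$ and $a_t=-\A(f_s)$, i.e.\ $f^*\A=-a_t\,ds+a_s\,dt$. A direct computation then gives
\[
da\wedge f^*\A \;=\; (a_s\,ds+a_t\,dt)\wedge(-a_t\,ds+a_s\,dt) \;=\; (a_s^{\,2}+a_t^{\,2})\,ds\wedge dt \;\geq\;0.
\]
Since $\phi'\geq 0$ by definition of $\mathscr{S}$, the term $\phi'(a)\,da\wedge f^*\A$ is also non-negative. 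In a global Hermitian frame $\{X,Y\}$ as in Remark \ref{global}, the same identity holds with $ds\wedge dt$ replaced by $\Omega_{\{X,Y\}}$, so the pointwise non-negativity is independent of the choice of local chart.

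Adding the two contributions, $u^*d\A_\phi\geq 0$ pointwise on $\ds$, hence $\int_{\ds} u^*d\A_\phi\geq 0$ for every $\phi\in\mathscr{S}$. Taking the supremum over $\mathscr{S}$ yields $E(u)\geq 0$. (One can also note for free that $\phi\equiv 0$ lies in $\mathscr{S}$, so the supremum is at worst $0$, but the pointwise bound above is what one really wants for later applications.) There is no serious obstacle in this proof; the only point to be careful about is the sign bookkeeping when computing $da\wedge f^*\A$ using the pseudoholomorphic equations, and noting that both factors in each summand are non-negative under the constraints $\phi\in[0,1]$ and $\phi'\geq 0$ imposed by $\mathscr{S}$.
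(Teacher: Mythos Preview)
Your proof is correct and follows essentially the same approach as the paper: both compute $u^*d\A_\phi$ in a local frame to obtain $(\phi'(a)|\nabla a|^2 + \phi(a)|\pi f_s|^2)\,ds\wedge dt$, observe pointwise non-negativity using $\phi\geq 0$ and $\phi'\geq 0$, and note via Remark~\ref{global} that this holds globally. Your write-up is slightly more explicit in separating the two summands and spelling out the $da\wedge f^*\A$ computation, but the argument is the same.
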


\begin{proof}
One may compute the integrand in light of the local computations of Section \ref{local} with respect to the local frame $\{\pa_s, \pa_t\}$,
\[
u^*d\A_\phi = (\phi'(a)|\nabla a|^2 + \phi(a) \Delta a) ds \we dt =  (\phi'(a)|\nabla a|^2 + \phi(\tau)|\pi f_s|^2 ) ds \we dt.
\]
As discussed previously in this section, we can convert the above into the following non-negative globally defined expression,  
\begin{equation}\label{zeroenergie}
 u^*d\A_\phi =  (\phi'(a)|\nabla a|^2 + \phi(\tau)|\pi f_X|^2 ) \Omega_{\{X,Y\}} \geq 0.
\end{equation}
\end{proof}

\begin{remark}\label{E0const} \em
As a result of the above expression for the integrand (\ref{zeroenergie}), we see that pseudoholomorphic maps $u$ with $E(u)>0$ are necessarily non-constant.  Moreover, $E(u) =0$ if and only if $u$ is constant.
\end{remark}

Stokes' theorem shows any \murve \ defined on a closed Riemann surface is constant.

\begin{proposition}\label{constant}
A \murve \ $u$ defined on a closed Riemann surface $(\Sigma, J)$ into the symplectization of a contact manifold $(\R \times M, d(e^\tau\A), \Jt)$ is  constant.
\end{proposition}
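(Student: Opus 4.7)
The plan is to combine Stokes' theorem with the non-negativity of the energy integrand derived in the preceding subsection, and then invoke Remark \ref{E0const}. The key point is that for a \emph{closed} Riemann surface $(\Sigma, j)$ there are no punctures and no boundary, so the integral of any exact form vanishes.

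Concretely, for each $\phi \in \mathscr{S}$, the 1-form $\alpha_\phi$ is smooth on $\R \times M$, hence $u^*\alpha_\phi$ is a smooth 1-form on $\Sigma$. Since exterior derivative commutes with pullback, $u^* d\alpha_\phi = d(u^*\alpha_\phi)$, and Stokes' theorem applied to the closed surface $\Sigma$ gives
\[
\int_\Sigma u^* d\alpha_\phi \;=\; \int_\Sigma d(u^*\alpha_\phi) \;=\; \int_{\partial \Sigma} u^*\alpha_\phi \;=\; 0.
\]
Taking the supremum over $\phi \in \mathscr{S}$ yields $E(u) = 0$.

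By Remark \ref{E0const}, $E(u) = 0$ forces $u$ to be constant, completing the proof. I do not foresee any real obstacle here; the only subtlety worth flagging is that one uses $\alpha_\phi$ rather than $\alpha$ itself precisely so that the integrand in (\ref{zeroenergie}) is pointwise non-negative, which made the supremum definition sensible in the first place, and here it even lets one bypass that consideration entirely because each individual integral vanishes by Stokes.
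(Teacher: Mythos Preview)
Your proof is correct and matches the paper's own argument essentially verbatim: apply Stokes' theorem on the closed surface to get $\int_\Sigma u^*d\A_\phi = 0$ for every $\phi \in \mathscr{S}$, hence $E(u)=0$, and conclude via Remark~\ref{E0const}. The only difference is that you spell out the justification for Stokes and the appeal to Remark~\ref{E0const} explicitly, whereas the paper leaves these implicit.
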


\begin{proof}
Stokes' theorem yields
\[
 \int_\Sigma u^*d\A_\phi = \int_{\pa \Sigma}u^*\A_\phi = 0.
\]    
for all $\phi \in \mathscr{S}$ hence $E(u)=0$.
\end{proof}

A pseudoholomorphic map $u$ is said to be a \textbf{finite energy map\footnote{In earlier literature these were sometimes referred to as finite energy parametrized surfaces.}} whenever 
\[
0 < E(u) < \infty.
\]
For any \murves \ $u, u'$ in the same equivalence class, we have
\[
E(u) = E(u') \ \ \ \mbox{and} \ \ \ A(u)= A(u').
\]
Thus it is common practice to refer to the energy or area of a curve.

The following examples will illuminate the different controls that area and energy have on a pseudoholomorphic map.

\begin{example}\label{energyex} \em
Define a pseudoholomorphic cylinder over a periodic orbit of the Reeb vector field by
\[
\begin{array}{crcl}
v:& (\R \times S^1, j) &\to& (\R \times M, \Jt) \\
&(s, e^{2\pi i t}) &\mapsto& (Ts, \gamma(Tt)), \\
\end{array}
\]
where the Reeb orbit $\ga$ is $T$-periodic.  

Since 
\[
u_s = T\ptau, \ \ \ u_t= T R_{\A} = \Jt u_s,
\]
the area vanishes, $A(u)=0$.  But $E(u)=T,$ because 
\[
E(u)=\sup_{\phi \in \mathscr{S}} \lim_{R \to \infty} \int_{[-R, R] \times S^1}u^*d\A_\phi = \sup_{\phi  \in \mathscr{S}} \lim_{R \to \infty} (\phi(R)T - \phi(-R)T) = T.
\]
Moreover in this situation we note that
\begin{equation}\label{fineass}
\begin{array}{lcll}
\displaystyle \lim_{s \to \infty} f(s,t) &=& \ga(Tt) & \mbox{ in } C^\infty(M), \\
\displaystyle \lim_{s \to \infty} \frac{a(s,t)}{s} &=& T & \mbox{ in } C^\infty(\R).\\
\end{array}
\end{equation}
In other words, the $M$-part of $u$ converges to a periodic orbit of the Reeb vector field of period $T$, while the $\R$-part is asymptotic to $(s,t) \to Ts$ as $s \to \infty$.   This will also be the situation for general finite energy cylinders; see Theorem \ref{reebasym}.
\end{example}

\bigskip

The following trivial \murve \ satisfies $E(u) = \infty$.
\begin{example}\em
Let
\[
\begin{array}{crcl}
u: &(\C, j_0)& \to &(\R \times M, \Jt) \\
&s+it& \mapsto& (s, \gamma(t)) \\
\end{array}
\]
where $\gamma: \R \to M$ is a closed orbit of the Reeb flow.  Then if we take a function $\phi \in \mathscr{S}$ with $\phi \neq 0$ we compute
\[
\int_\C u^* d\A_\phi  = \int_\C \phi'(s) ds dt = \left(\phi(\infty) - \phi(-\infty)\right)\int_\R dt = \infty.
\]
\end{example}


\subsection{Hofer energy and asymptotics}\label{asymptotics}
The finiteness of Hofer energy is an extremely important distinguishing characteristic of \curves, as it implies asymptotic convergence in $C^\infty$ to nondegenerate periodic orbits of the Reeb vector field at the punctures.  Moreover, the existence of periodic orbits of a Reeb vector field will follow from the existence of a finite energy surface.

The following theorem is an amalgamation of Theorems 1.2 and 1.3 in \cite{HWZ1}as well as a removable singularity result that is not explicitly stated as a theorem in \cite{HWZ2}, but proven within the exposition of the latter's introduction.

\begin{theorem}\label{reebasym}
Let $v=(a, f): ((\rho,\infty) \times S^1 ,j) \to (\R \times M, \Jt)$ be a \murve \ such that, 
\[
0\leq E(v) < \infty.
\]
then the following holds.
\begin{itemize}
\item The following limit exists,
\begin{equation}\label{eqT}
T := \lim_{s\to \infty} \int_{S^1} f^*\A.
\end{equation}
\item If $T=0$ then the corresponding curve $u$ on the punctured disk $D\setminus \{ 0 \}$ defined by $u(e^{2\pi(s+it)})=v(s,t)$ can be extended smoothly to the whole disk $D \subset \C$.
\item If $T \neq 0$ then there exists a $|T|$-periodic orbit $\ga(t)$ of the Reeb vector field.  In addition, there exists a sequence $\rho_k \to \infty$ such that 
\[
\lim_{k \to \infty}f(\rho_k,t) = \ga(tT) \mbox{ in } C^{\infty}(\R)
\]
and 
\[
\lim_{s \to \infty} \frac{a(s,t)}{s} = T.
\]
 If $\ga$ is nondegenerate then $f(s, \cdot)$ converges to a parametrization of $\ga$, namely 
\[
\lim_{s \to \infty}f(s, \cdot) = \ga(tT), 
\]
 with convergence in $C^{\infty}(\R).$ 
 \end{itemize}
 \end{theorem}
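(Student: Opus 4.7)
The plan is to proceed in three steps: first prove existence of the limit $T$, then use compactness to extract subsequential asymptotic profiles of $f(s,\cdot)$, and finally analyze the two cases $T=0$ and $T\neq 0$ separately. Throughout, I would exploit the local Cauchy-Riemann identities (\ref{loc2})--(\ref{loc3}).

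First I would show $T := \lim_{s\to\infty} m(s)$ exists, where $m(s) := \int_{\{s\}\times S^1} f^*\A$. Using (\ref{loc3}) one computes $m'(s) = \int_{S^1} d\A(f_s, f_t)\,dt = \int_{S^1} |\pi f_s|^2\,dt \geq 0$, so $m$ is $C^1$ and non-decreasing. For an upper bound I would prove the general inequality $A(u) \leq E(u)$ by testing the supremum defining $E(u)$ against functions $\phi \in \mathscr{S}$ that approach the constant $1$ pointwise from below, then invoking monotone convergence applied to the decomposition $u^*d\A_\phi = \phi'(a)|\nabla a|^2\,\Omega_{\{X,Y\}} + \phi(a)f^*d\A$ whose two summands are independently non-negative. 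It follows that $\int_{s_0}^\infty m'(s)\,ds \leq A(u) \leq E(u) < \infty$, so $m$ converges to a finite limit $T$.

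Second, I would rule out gradient blow-up as $s\to\infty$ by a standard rescaling argument in the style of \cite{HWZ1}: if $|du(s_n,t_n)|\to\infty$ along some sequence $s_n\to\infty$, one rescales to extract a nonconstant finite energy sphere or plane, contradicting either Proposition \ref{constant} or the fact that each bubble absorbs a uniformly positive quantum of energy from disjoint regions of the finite-energy tail. Elliptic bootstrapping for the Cauchy-Riemann equation then upgrades the uniform $C^0$ gradient bound to uniform $C^\infty$ bounds on translates, so by Arzela-Ascoli every sequence $\rho_k\to\infty$ admits a subsequence along which $f(\rho_k,\cdot)$ converges in $C^\infty(S^1)$ to a loop $\gamma_\infty\colon S^1\to M$ satisfying $\int_{S^1}\gamma_\infty^*\A = T$.

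Third, I would split into the two cases. If $T=0$ then $\int_{s_0}^\infty m'(s)\,ds$ vanishes in the limit, and combining this with the maximum principle (Proposition \ref{max-principle}) and the first equation of (\ref{loc2}) one confines the image of $u$ near the puncture to a compact subset of $\R\times M$; the classical removable singularities theorem for bounded-energy pseudoholomorphic maps then extends $u$ smoothly across $0\in D$ after the change of coordinates $u(e^{2\pi(s+it)}) = v(s,t)$. If $T\neq 0$, then the finiteness of $\int_{s_0}^\infty m'(s)\,ds$ forces $m'(s) = \int_{S^1}|\pi f_s|^2\,dt \to 0$ as $s\to\infty$, so $\pi\dot\gamma_\infty \equiv 0$, i.e.\ $\dot\gamma_\infty$ is parallel to $R_\A$; together with $\int\gamma_\infty^*\A = T$ this realizes $\gamma_\infty$ as a reparametrization of a $|T|$-periodic Reeb orbit $\gamma(T\cdot)$. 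The asymptotic identity $a(s,t)/s \to T$ follows by integrating $a_s = \A(f_t)$ over $[0,s]$ and passing to the limit. The most delicate step, and the main analytic obstacle, is upgrading subsequential convergence to genuine convergence in the nondegenerate case: here I would linearize $\bar\partial_{\Jt}$ at the trivial cylinder over $\gamma$, observe that nondegeneracy of $\gamma$ is exactly the statement that the asymptotic operator $-\Jt(\partial_t - S(t))$ on $\xi|_\gamma$ has a spectral gap at $0$, and use this gap together with an exponential decay estimate in weighted Sobolev spaces to conclude uniform $C^\infty(\R)$ convergence of $f(s,\cdot)$ to $\gamma(T\cdot)$.
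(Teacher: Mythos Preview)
The paper does not actually prove this theorem: it is stated as ``an amalgamation of Theorems 1.2 and 1.3 in \cite{HWZ1} as well as a removable singularity result \ldots\ in \cite{HWZ2},'' and is simply cited without proof. Your sketch is therefore not being compared against an argument in the paper but rather against the original HWZ proofs, whose broad outline it follows faithfully: monotonicity of $m(s)$ from the local identities, gradient bounds via bubbling analysis, Arzel\`a--Ascoli plus the vanishing of the tail $d\A$-area to identify subsequential limits as Reeb trajectories, removal of singularities when $T=0$, and an asymptotic-operator/spectral-gap argument for genuine convergence in the nondegenerate case.

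Two small points you should tighten if you flesh this out. First, the implication ``$\int_{s_0}^\infty m'(s)\,ds < \infty$ and $m'\ge 0$ forces $m'(s)\to 0$'' is false without additional control (think of tall narrow spikes); you need the uniform $C^1$ bound on $m'$ coming from your Step~2 bootstrapping, so say so explicitly. Second, in the bubbling argument the contradiction for a plane bubble is cleanest phrased via $d\A$-area rather than Hofer energy: the tail $\int_{[s,\infty)\times S^1} f^*d\A$ tends to zero as $s\to\infty$, so for $s$ large there is not enough $d\A$-area to support a nonconstant finite energy plane (whose $d\A$-area equals the period of its asymptotic orbit, bounded below on a compact $M$). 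Your phrase ``positive quantum of energy from disjoint regions'' gestures at this but conflates the two notions of energy.
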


\begin{proposition}[Characterization of finite energy cylinders in symplectizations]
Let $u = (a,f) : (\R \times S^1, j) \to (\R \times M, \Jt)$ be a pseudoholomorphic cylinder with $0<E(u) <\infty$.  Then the negative end $u$ has either a removable puncture or it converges to a Reeb orbit in the $M$-component at the $-\infty$ end of $(\R \times M, \Jt)$.  The positive end of $u$ always converges to a Reeb orbit at the $+\infty$ end of $(\R \times M, \Jt)$. 
\end{proposition}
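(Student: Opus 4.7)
The plan is to apply Theorem \ref{reebasym} separately to each of the two ends of the cylinder $u$. The end at $s=+\infty$ is already in the form of a positive half-cylinder $(\rho,\infty)\times S^1$ of finite Hofer energy, so the theorem applies directly and yields either a removable puncture (the asymptotic period $T_+=0$ and $u$ extends smoothly across the puncture) or $C^\infty$-convergence to a parametrized Reeb orbit, with $a(s,t)/s\to T_+\ne 0$ determining the sign of $a$ at infinity. For the end at $s=-\infty$ I would first perform the holomorphic reparametrization $(s,t)\mapsto(-s,-t)$, so that the same theorem applies to the reparametrized map on a positive half-cylinder. Tracking the signs carefully, this produces the analogous dichotomy at $s=-\infty$: either removable, or $C^\infty$-convergence to a Reeb orbit with the sign of $T_-$ dictating whether $a$ tends to $+\infty$ or $-\infty$.

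Next I would combine Stokes' theorem with Propositions \ref{max-principle} and \ref{constant} to rule out the configurations incompatible with the claimed classification. Stokes applied to $u^*d\A = f^*d\A$ over the truncated cylinder $[-R,R]\times S^1$, together with the non-negativity $f^*d\A\ge 0$ from Proposition \ref{posarea}, yields in the limit $R\to\infty$ the inequality $T_+-T_-=A(u)\ge 0$. If both punctures were removable, then $u$ would extend smoothly to a pseudoholomorphic sphere $S^2\to\R\times M$ and would be constant by Proposition \ref{constant}, contradicting $E(u)>0$. Any configuration in which $a$ is bounded above on $\R\times S^1$ (e.g.\ both ends with $a\to-\infty$, or one removable puncture paired with an end at which $a\to-\infty$) forces $a$ to attain its supremum at an interior point of the appropriate compactified domain, and Proposition \ref{max-principle} then forces $u$ constant, again contradicting $E(u)>0$.

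To finish, I would establish that a positive end must exist: choosing $\phi\in\mathscr{S}$ with $\phi(+\infty)=1$ and $\phi(-\infty)=0$, a direct Stokes computation of $\int u^*d\A_\phi$ gives $E(u)\geq T_+$, so $E(u)>0$ forces at least one end of $u$ to be a positive puncture. Combined with the Stokes inequality $T_+\geq T_-$ and the exclusions above, one reads off the stated dichotomy: the positive end is asymptotic to a Reeb orbit at the $+\infty$ end of $(\R\times M,\Jt)$, while the negative end is either removable or asymptotic to a Reeb orbit at the $-\infty$ end. The most delicate step, and the one I would expect to require the most care, is the sign bookkeeping after the reparametrization at $s=-\infty$ together with the verification that the Hofer energy is indeed computed by the total asymptotic period at the positive punctures.
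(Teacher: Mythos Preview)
Your approach matches the paper's: invoke Theorem \ref{reebasym} for the dichotomy at each end, then use Proposition \ref{constant} to exclude both punctures being removable and Proposition \ref{max-principle} to exclude $a$ being bounded above. The paper's proof is exactly those two sentences and nothing more; your reparametrization discussion and the Stokes inequality $T_+\ge T_-$ are additional detail that the paper leaves implicit.

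One caution: your step 5 is both unnecessary and logically off. From $E(u)\ge T_+$ and $E(u)>0$ you cannot conclude $T_+>0$; the inequality goes the wrong way. The existence of at least one positive puncture is already secured by your step 4 via the maximum principle, so you should simply drop step 5 rather than try to repair it.
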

\begin{proof}
From Proposition \ref{constant}, any pseudoholomorphic curve defined on a closed surface is necessarily constant.  As a result we know that both ends cannot be removable punctures unless $u$ is the constant curve.  In addition, at least one of the ends must tend towards $+ \infty$ otherwise we obtain a contradiction with the maximum principle in Proposition \ref{max-principle}.  
\end{proof}

One can interpret Theorem \ref{reebasym} to say that an finite energy pseudoholomorphic map $u:( S^2 \setminus \{ x, y_1, ... y_s \}, j) \to (\R \times M, \Jt)$ converges to vertical cylinders over closed Reeb orbits at $t=\pm \infty$. We illustrate this in Figure \ref{curve}.

 \begin{figure}[h!]
  \centering
    \includegraphics[width=0.5\textwidth]{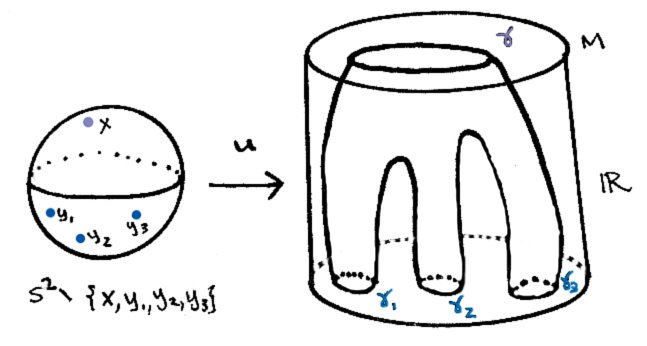}
      \caption{A pseudoholomorphic curve $u$ in $\R \times M$ with $s=3$.}
      \label{curve}
\end{figure}

Recall that the symplectic action of a $T$-periodic Reeb orbit $\ga$ is defined to be 
\[
\cala(\ga) := \int_{\ga} \A = \int_{S^1} \ga^*\A = \displaystyle \int_0^T \A(\dot{\ga}(t))dt = T.
\]
The following lemma shows that the action decreases along finite energy pseudoholomorphic cylinders $u:=(a,f)$, which converge to the Reeb trajectories $\ga_+$ at the positive end and $\ga_-$ at the negative end of the symplectization, as in Example \ref{energyex}, equation (\ref{fineass}). 

\begin{lemma}
Let $u=(a,f)$ be a finite energy pseudoholomorphic cylinder in $\calm(\gp; \gm)$. Then
\[
\cala(\ga_+) \geq \cala(\ga_-),
\]
with equality if and only if $\ga_+ = \ga_-$ and the image of $u$ is an $\R$-invariant pseudoholomorphic cylinder.
\end{lemma}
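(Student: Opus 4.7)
The plan is to apply Stokes' theorem to $f^*\alpha$ on a compact exhaustion of the cylinder and identify $A(u)$ with the action difference $\mathcal{A}(\gp) - \mathcal{A}(\gm)$. Fix cylindrical coordinates $(s,t) \in \R \times S^1$ and integrate over the compact cylinder $[-R,R] \times S^1$. Since $f^*d\alpha = d(f^*\alpha)$, Stokes yields
\begin{equation*}
\int_{[-R,R] \times S^1} f^*d\alpha \; = \; \int_{\{R\}\times S^1} f^*\alpha \; - \; \int_{\{-R\}\times S^1} f^*\alpha.
\end{equation*}
The asymptotic convergence $f(s,\cdot) \to \gamma_\pm(T_\pm\, \cdot)$ in $C^\infty(S^1)$ supplied by Theorem \ref{reebasym} (equivalently \eqref{cylasymptotics}) allows me to pass to the limit as $R \to \infty$ on the right-hand side, obtaining $\mathcal{A}(\gp) - \mathcal{A}(\gm)$. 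On the left-hand side, Proposition \ref{posarea} ensures the integrand $f^*d\alpha$ is pointwise non-negative, so its integral over the exhausting family converges monotonically to the (possibly infinite) area $A(u)$. Finiteness of $A(u)$ follows from the finiteness of the Hofer energy of $u$ (in fact $A(u) \leq E(u)$, as $d\alpha$ is precisely $d\alpha_\phi$ for $\phi \equiv 1$, a limit of elements in $\mathscr{S}$). This gives the clean identity
\begin{equation*}
\mathcal{A}(\gp) - \mathcal{A}(\gm) \; = \; A(u) \; \geq \; 0.
\end{equation*}

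For the equality statement, suppose $\mathcal{A}(\gp) = \mathcal{A}(\gm)$, so $A(u) = 0$. Since the integrand $f^*d\alpha = |\pi f_X|^2\, \Omega_{\{X,Y\}}$ from the local/global computation in Section \ref{local} is non-negative, vanishing of the integral forces $\pi \circ df \equiv 0$ on $\dot{\Sigma}$. By Remark \ref{noarea1} and Corollary \ref{noarea}, the image $f(\dot{\Sigma})$ is contained in a single Reeb trajectory. Combined with the local equations \eqref{loc2}, which on the complement of the critical set read $a_s = \alpha(f_t)$ and $a_t = -\alpha(f_s)$, one finds that $u$ is (up to reparametrization) a trivial cylinder $(s,t) \mapsto (Ts + c,\, \gamma(Tt + t_0))$ over a single Reeb orbit $\gamma$. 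In particular $\gp = \gm = \gamma$ and $\mbox{image}(u) = \R \times \gamma$ is $\R$-invariant. Conversely, if $\gp = \gm$ and $u$ has $\R$-invariant image, then its image lies in a Reeb trajectory so $f^*d\alpha \equiv 0$, giving $A(u) = 0$ and hence equality of the actions.

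The only potentially subtle point is justifying the interchange of limit and integration in passing from finite $R$ to $R \to \infty$ on both sides. On the boundary terms this is immediate from $C^\infty$ convergence of $f(\pm R,\cdot)$ to the asymptotic Reeb orbits. On the area side, monotone convergence suffices once one knows $A(u) \leq E(u) < \infty$; this bound is what ties the a priori unrelated notions of area and energy together and is the one place where the finite-energy hypothesis is essential. Everything else is a direct appeal to Stokes, Proposition \ref{posarea}, and Corollary \ref{noarea}.
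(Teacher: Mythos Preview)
Your proof is correct and follows essentially the same approach as the paper: Stokes' theorem identifies $A(u)$ with $\cala(\gp)-\cala(\gm)$, nonnegativity of $f^*d\A$ (Proposition~\ref{posarea}) gives the inequality, and $A(u)=0$ forces the image into a Reeb trajectory (Corollary~\ref{noarea}) for the equality case. If anything, you are more careful than the paper about the compact exhaustion and the finiteness of $A(u)$; note also that $\phi\equiv 1$ is already an element of $\mathscr{S}$, not merely a limit, so $A(u)\leq E(u)$ is immediate.
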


\begin{proof}
Let $u : (\R \times S^1, j) \to (\R \times M, \Jt)$ be a pseudoholomorphic cylinder which converges to the Reeb trajectory $\ga_+$ at the positive end and to $\ga_-$ at the negative end.  By Stokes' theorem,
\[
\cala(\gp) - \cala(\gm) =  \int_{\R \times S^1} f^* d\A
\]
The asymptotics of $u$ imply that the above integral converges.  We obtain
\[
A(u) = \int_{\R \times S^1} f^* d\A = \int_{\ga_+} \A - \int_{\ga_-} \A = \cala(\ga_+) - \cala(\gm) = T_+ - T_-.
\]
From the condition that $\Jt$ is a compatible almost complex structure we know that $f^*d\A \geq 0$ in $\R \times S^1$.  We obtain equality only in the case that $u$ is tangent to $\R \times R_\A$, i.e. when $\gp = \gm$ and $u$ is as in Example \ref{energyex}.
\end{proof}

\section{Traversing transversality troubles}\label{transversality}
Regularity results applicable to the moduli spaces necessary to the conjectures of \cite{EGH} have only been given under specialized circumstances \cite{HWZ2, HWZ3, HT2, Wtrans}.  In dimension 3, we are able to combine these  results with the Conley-Zehnder index computations of Section \ref{regularity} to construct a well-defined chain complex without the use of virtual chains.  We begin this section with a recollection of the results of \cite{HWZ3, HT2} for immersed somewhere injective curves and end it by providing the results of \cite{HWZ2, Wtrans}, which will be used in Section \ref{regularity}.

\subsection{Somewhere injective and immersed \curves}\label{modulispaces}
The transversality results of \cite{HWZ3} apply to moduli spaces consisting of immersed somewhere injective asymptotically cylindrical curves.  After appealing to \cite{HT2, Wtrans} we conclude that all cylinders\footnote{This includes unbranched multiply covered cylinders.} of index $\leq 2$ are immersed.  We also state the theorem that a non-constant asymptotically cylindrical pseudoholomorphic map factors through a somewhere injective one, which follows from the corresponding result for finite energy planes, proven in the appendix of  \cite{HWZ2}. 

\begin{definition}\em
An asymptotically cylindrical \curve \ 
\[
u: (\ds:=\Sigma \setminus ( \g_+ \sqcup \g_- ), j) \to (W, J)
\]
is said to be \textbf{multiply covered} whenever there exists a \curve \ 
\[
v: (\ds':=\Sigma' \setminus ( \g'_+ \sqcup \g'_-), j') \to (W, J),
\]
 and a holomorphic branched covering $\varphi: (\Sigma, j) \to (\Sigma', j') $ with $\g'_+ = \varphi(\g_+)$ and  $\g'_- = \varphi(\g_-)$ such that
\[
u = v \circ \varphi, \ \ \ \mbox{deg}(\varphi)>1,
\]
allowing for $\varphi$ to not have any branch points.
\end{definition}
The expression of both cylindrical contact differentials (\ref{pa1}, \ref{pa2}) involved the \textbf{multiplicity} of a finite energy pseudoholomorphic cylinder $\calc$, defined by $\mult(\calc) := \mbox{deg}(\varphi).$

An asymptotically cylindrical \curve \ $u$ is called \textbf{simple} whenever it is not multiply covered.  From \cite{HWZ2} we can conclude that every simple asymptotically cylindrical curve  is \textbf{somewhere injective}, meaning for some $z \in \ds$,
\[
du(z) \neq 0 \ \  \ u^{-1}(u(z))=\{z\}.
\]
A point $z \in \ds$ with this property is called an \textbf{injective point} of $u$.  

An \textbf{immersed} pseudoholomorphic curve (with one positive puncture) is an equivalence class of tuples $(\Sigma, j, \Gamma, u)$, as defined in Section \ref{pseudsymp}, such that $u$ is an immersion. Theorem \ref{HT2thm} and Proposition \ref{Z} imply that after a generic choice of $\Jt$ somewhere injective curves of sufficiently small index are immersed because generically the order of the critical points of a curve, defined in (\ref{w-Z}), provides a lower bound on the index of a curve.

Before we can state the results of \cite{HWZ3}, we must define a suitable Banach space on which we can vary $\Jt$.  To accomplish this we follow the approach of Floer, as in \cite{Fgrad} and introduce Floer's $\ce$-space, as follows.  Let $\bar{J}_0: \xi \to \xi$ be compatible with $d\A$.  Let $\Jt_0$ be the corresponding $\A$-compatible extension to  $T(\R \times M)$, as in Definition \ref{complexstruc}.  Consider the space of all smooth bundle maps $\Psi(p): \xi_p \to \xi_p$ satisfying
\begin{equation}
\begin{array}{lcl}
\Psi(p)J_0(p) + J_0(p)\Psi(p)& = &0 \\
d\A(\Psi X, Y) + d\A(X, \Psi Y) &= & 0 \ \ \ \mbox{for } X, Y \in \xi.
\end{array}
\end{equation}
Let $\ee = \{ \ee_n\}_{n=1}^\infty$ be a sequence of positive numbers such that $\lim_{n \to \infty} \ee_n =0$.  The $\ce$-space consists of $C^\infty$ homomorphisms of $\xi$ whose sums of  weighted $C^k$ norms decay sufficiently fast.  It is defined as
\[
\ce = \left\{ \Psi \in \mbox{Hom}_\R(\xi), \ \Psi \in \Ci \ | \ ||\Psi||_\ee := \sum_{n=1}^\infty \ee_n ||\Psi||_n < \infty \right\},
\]
where $||\Psi||_k$ is the $C^k$ norm with respect to a metric on $W$.  
If $\ee_n \to 0$ sufficiently fast then $(\ce, ||\cdot||_\ee)$ is a separable Banach space, which is dense
in $\Ci$; see \cite{Fgrad}.  For $\Delta > 0$, denote by
\[
U_\Delta = \left\{ \Jt \ | \ \bar{J} = \bar{J}_0 \exp (-J_0 \Psi), \ \Psi \in \ce, \ ||\Psi||_\ee < \Delta  \right\}.
\]
Here it is understood that we extend $\Jt$ from $\bar{J}$ in the usual manner, that $\Jt \ptau = R_{\A}$. The inverse map given by $\Psi \mapsto \Jt \in U_\Delta$ provides a global chart for $U_\Delta$, endowing $U_\Delta$ with the structure of a Banach manifold diffeomorphic to an open subset in a  separable Banach manifold.  

\begin{remark}\em
The expression, ``choosing $\Jt$ generically,'' means that $\Jt$ has been chosen from the residual subset $\mathcal{S} \subset U_{\Delta}$ such that Corollary \ref{bigdim} holds.  
\end{remark}

Next we state Theorem 1.10 of \cite{HWZ3}, from which the subsequent corollaries follow for the usual reasons described in the text of Section 1 in \cite{HWZ3}.

\begin{theorem}[Hofer-Wysocki-Zehnder \cite{HWZ3}]
Let \[
u: (\ds := \Sigma \setminus \{ x; y_1, ... y_s\}, j_0) \to (\R \times M, \Jt)
\]
 be an immersed somewhere injective pseudoholomorphic curve asymptotic to the nondegenerate Reeb orbits $\gamma$ at the positive puncture $x$ and $\gamma_1,...,\gamma_s$ at the negative punctures $y_1, ..., y_s$ and $\Jt$ an $\A$-compatible almost complex structure.  Then the set $\ncal(\gamma;\gamma_1,...\gamma_s)$ of all pairs $(C, \Jt)$, consisting of an equivalence class $C$ of tuples $(\Sigma,  j, \Gamma, u)$ and an $\A$-compatible $\R$-invariant almost complex structure $J$, carries the structure of a separable manifold and the projection map
 \[
 pr: \ncal(\gamma; \gamma_1,...\gamma_s) \to U_{\Delta}
 \]
 \[
pr(C, \Jt) = \Jt
\]
is a Fredholm map with Fredholm index near $u$ given by
\begin{equation}\label{dim}
\mbox{\em ind}(u) = (s+1 - \chi(\Sigma)) + \czm(\gamma) - \displaystyle \sum_{i=1}^s\czm(\gamma_i).
\end{equation}
\end{theorem}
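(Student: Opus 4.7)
The plan is to follow the standard Sard--Smale framework adapted to the punctured/asymptotically cylindrical setting, with the key analytic input being transversality of a universal $\bar\partial$-section at somewhere injective immersed curves. First I would set up a Banach manifold $\calb$ of asymptotically cylindrical maps $u = (a,f) : \dot\Sigma \to \R \times M$ which converge to the prescribed Reeb orbits $\gamma, \gamma_1, \dots, \gamma_s$ in the sense of (\ref{ass1})--(\ref{ass2}). Concretely, fix a smooth reference map $u_0$ with the correct asymptotic data, and model $\calb$ near $u_0$ on the weighted Sobolev space $W^{1,p}_\delta(\dot\Sigma, u_0^*T(\R\times M))$ augmented by the finite-dimensional space of asymptotic translation parameters $(s_0, t_0)$ at each puncture; here $\delta > 0$ is smaller than the spectral gap of the asymptotic operators associated to the nondegenerate orbits, so that the resulting Cauchy--Riemann operator is Fredholm. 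The bundle $\cale \to \calb \times U_\Delta$ has fiber $L^p_\delta(\dot\Sigma, \mathrm{Hom}^{0,1}(T\dot\Sigma, u^*T(\R\times M)))$, and the section $\bar\partial : (u, \Jt) \mapsto du + \Jt \circ du \circ j$ is smooth on the interior of $U_\Delta$ by standard arguments.

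Next I would compute the linearization $D\bar\partial_{(u,\Jt)}(\eta, Y) = D_u \eta + \tfrac12 Y(u) \circ du \circ j$, where $D_u : W^{1,p}_\delta(u^*T(\R\times M)) \to L^p_\delta$ is the usual linearized Cauchy--Riemann operator twisted by the asymptotic data. Fredholmness of $D_u$ (and hence of the partial section with $J$ fixed modulo the finite-dimensional asymptotic cokernel/kernel contributions) follows from the spectral theory of the asymptotic operators as in \cite{HWZ3, schwarz}, and Riemann--Roch for cylindrical ends yields the index in (\ref{dim}):
\[
\mathrm{ind}(u) = (s+1-\chi(\Sigma)) + \czm(\gamma) - \sum_{i=1}^s \czm(\gamma_i),
\]
where the $s+1-\chi(\Sigma)$ summand is the Euler-characteristic contribution coming from the punctures and the shift $\delta$.

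The heart of the proof is showing that the full linearization $D\bar\partial_{(u,\Jt)}$ is \emph{surjective} at every zero $(u, \Jt)$ with $u$ somewhere injective and immersed; I expect this to be the main obstacle. The standard strategy is: suppose $\eta^* \in L^q_{-\delta}$ annihilates the image of $D\bar\partial$ with respect to the $L^2$-pairing. Testing against $(\eta, 0)$ and applying elliptic regularity together with the Carleman-type unique continuation of Floer--Hofer--Salamon \cite{FHS} shows $\eta^*$ is smooth and satisfies a first-order equation, so its zero set is discrete. Testing then against $(0, Y)$ for $Y \in \ce$ supported near an injective immersed point $z_0$ and tangent to $\xi$ forces the pointwise pairing of $Y(u(z_0))\circ du(z_0)\circ j$ with $\eta^*(z_0)$ to vanish for all admissible $Y$; the injectivity condition $u^{-1}(u(z_0)) = \{z_0\}$ rules out cancellation from other sheets, the immersion condition ensures $du(z_0)$ is injective, and the $\R$-invariance of $\Jt$ is handled by the standard trick of splitting off the Reeb and $\R$-directions and exploiting that the image of the projection $\pi \circ du(z_0)$ is two-dimensional in $\xi$ (which is where some genericity and nonverticality at the injective point is invoked, as in Proposition 6.1 of \cite{HWZ3}). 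The density of $\ce$ in $C^\infty$, via the Floer space construction, then yields $\eta^*(z_0) = 0$, and unique continuation upgrades this to $\eta^* \equiv 0$.

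Once transversality of the universal section is established, the implicit function theorem presents $\ncal(\gamma; \gamma_1, \dots, \gamma_s) = \bar\partial^{-1}(0)$ as a smooth separable Banach manifold, and the projection $pr : \ncal \to U_\Delta$ is Fredholm of the stated index because its kernel and cokernel agree with those of $D_u$. The Sard--Smale theorem then provides the residual set $\mathcal S \subset U_\Delta$ of regular values over which $pr^{-1}(\Jt)$ is a manifold of dimension $\mathrm{ind}(u)$, giving the corollaries alluded to in the text. The only wrinkles are (i) the need to work in a fixed weighted space where $D_u$ is Fredholm but surjectivity can still be tested against smooth sections (handled by choosing $\delta$ less than the spectral gap and smoothing $\eta^*$ via elliptic regularity), and (ii) ensuring the Floer space $\ce$ contains enough $\R$-invariant perturbations of $\Jt$ that are also $d\A$-compatible on $\xi$, which is the reason for restricting $\Psi$ to anticommute with $J_0$ and be $d\A$-antisymmetric in the definition of $U_\Delta$.
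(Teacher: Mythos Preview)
The paper does not prove this theorem at all: it is quoted verbatim as Theorem~1.10 of \cite{HWZ3}, and the paper immediately passes to the corollaries ``for the usual reasons described in the text of Section~1 in \cite{HWZ3}.'' So there is no in-paper proof to compare your proposal against.

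That said, your sketch is exactly the argument of \cite{HWZ3} in outline: weighted Sobolev setup with exponential weights below the spectral gap of the nondegenerate asymptotic operators, Riemann--Roch with cylindrical ends for the index formula, and surjectivity of the universal linearization at somewhere-injective immersed points via a cokernel-annihilator argument combined with unique continuation. Your identification of the two delicate points is accurate. For~(ii), the restriction to $\R$-invariant, $d\A$-compatible perturbations supported in $\xi$ is precisely why the argument needs an injective point $z_0$ where $\pi\circ du(z_0)\neq 0$ (i.e.\ the curve is not tangent to $\R\partial_\tau\oplus\R R_\A$ there); this is the content of the $\pi\circ Du\not\equiv 0$ hypothesis appearing in Corollary~\ref{bigdim}, and without it the variations $Y(u)\circ du\circ j$ available from $\ce$ do not span enough directions to kill $\eta^*$. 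You might make that dependence explicit, since your current wording (``some genericity and nonverticality at the injective point'') underplays that this is a genuine hypothesis rather than something one arranges.
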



We obtain the following two useful corollaries from the above result.
\begin{corollary}\label{regJdim}
For regular values $\Jt$ of $pr$, $pr^{-1}(\Jt)$ is a smooth finite dimensional manifold, whose dimension is given by
\[
\mbox{\em dim } \ncal(\gamma; \gamma_1,...\gamma_s) := (s+1 - \chi(\Sigma)) + \czm(\gamma) - \displaystyle \sum_{i=1}^s\czm(\gamma_i). 
\]
\end{corollary}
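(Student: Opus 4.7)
The plan is to deduce the corollary as a direct application of the infinite-dimensional implicit function theorem (the regular value theorem for Fredholm maps) to the projection $pr$ furnished by the preceding Hofer--Wysocki--Zehnder theorem. Everything needed is already on the table: $\ncal(\gamma;\gamma_1,\dots,\gamma_s)$ is a separable Banach manifold, $U_\Delta$ is an open subset of a separable Banach manifold, and $pr$ is Fredholm with index given by \eqref{dim}.

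First I would recall the standard fact that if $F\colon \mathcal{X}\to \mathcal{Y}$ is a smooth Fredholm map of separable Banach manifolds and $y\in \mathcal{Y}$ is a regular value, then $F^{-1}(y)$ is a smooth submanifold of $\mathcal{X}$ of dimension equal to the Fredholm index of $F$. This is the Banach-manifold version of the regular value theorem: at each $x\in F^{-1}(y)$ the linearization $DF(x)\colon T_x \mathcal{X}\to T_y\mathcal{Y}$ is surjective with finite-dimensional kernel (whose dimension equals $\operatorname{ind}(F)$ when $y$ is regular), so the implicit function theorem in Banach spaces produces a smooth chart for $F^{-1}(y)$ near $x$ modeled on $\ker DF(x)$. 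Applying this to $F=pr$ at a regular value $\Jt\in U_\Delta$ gives that $pr^{-1}(\Jt)$ is smooth and finite-dimensional, with local dimension equal to $\operatorname{ind}(pr)$ near each $(C,\Jt)$.

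Next, I would invoke the formula from the theorem: the Fredholm index of $pr$ near a pair $(C,\Jt)$ represented by an immersed somewhere injective curve $u$ is
\[
\operatorname{ind}(u) = (s+1-\chi(\Sigma)) + \czm(\gamma) - \sum_{i=1}^{s}\czm(\gamma_i),
\]
and substituting this into the dimension formula above yields exactly the claimed value of $\dim \ncal(\gamma;\gamma_1,\dots,\gamma_s)$. Because this index is locally constant on the connected component containing $(C,\Jt)$ (Fredholm index is locally constant, and here it depends only on the asymptotic data $\gamma,\gamma_1,\dots,\gamma_s$ and the Euler characteristic of $\Sigma$, all of which are fixed), the dimension of $pr^{-1}(\Jt)$ is globally the same constant.

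The only subtlety worth flagging is that this statement is a pointwise consequence of the theorem and carries no claim about existence of regular values; that is provided separately by the Sard--Smale theorem applied to $pr$, which gives a residual set of regular $\Jt\in U_\Delta$. No further analytic input is required, so I do not expect any genuine obstacle: the corollary is essentially a bookkeeping step translating "Fredholm of index $d$ with $\Jt$ regular" into "smooth manifold of dimension $d$."
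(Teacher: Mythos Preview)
Your proposal is correct and is precisely the ``usual reasons'' the paper alludes to: the corollary is an immediate consequence of the regular value theorem for Fredholm maps between Banach manifolds applied to the projection $pr$ from the Hofer--Wysocki--Zehnder theorem. The paper does not give an explicit proof of this corollary, instead deferring to the exposition in Section~1 of \cite{HWZ3}, so your argument matches the intended one.
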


\begin{corollary}\label{bigdim}
There exists a dense subset $\mathcal{S} \subset U_{\Delta}$ such that for every $\Jt \in \mathcal{S}$ if $u$ is a somewhere injective immersed $\Jt$-curve in $\ncal(\gamma;\gamma_1,...\gamma_s)$,  then
\[
(s+1 - \chi(\Sigma)) + \czm(\gamma) - \displaystyle \sum_{i=1}^s\czm(\gamma_i)  \geq 1
\]
provided that $\pi \circ Du$ does not vanish identically.  Recall that $\pi: TM \to \xi$ is the projection along the Reeb vector field $R_\A$.
\end{corollary}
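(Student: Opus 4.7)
The plan is to invoke Sard--Smale on the Fredholm projection $pr: \ncal(\gamma;\gamma_1,\ldots,\gamma_s) \to U_\Delta$ furnished by the preceding theorem, and then use the $\R$-action on the symplectization to upgrade the resulting dimension estimate from $\geq 0$ to $\geq 1$ under the hypothesis $\pi \circ Du \not\equiv 0$.

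First I would invoke the Sard--Smale theorem: since $pr$ is a Fredholm map of Banach manifolds with index equal to the expression $(s+1-\chi(\Sigma)) + \czm(\gamma) - \sum_{i=1}^s \czm(\gamma_i)$, the set of regular values forms a residual, and therefore dense, subset $\mathcal{S} \subset U_\Delta$. For any $\Jt \in \mathcal{S}$, Corollary \ref{regJdim} tells us that $pr^{-1}(\Jt)$ is a smooth finite-dimensional manifold whose dimension at a point $u$ equals the Fredholm index $\mbox{ind}(u)$.

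Next I would use the ambient $\R$-action to gain an extra dimension. Given an immersed somewhere injective $u = (a,f) \in pr^{-1}(\Jt)$ with $\pi \circ Du \not\equiv 0$, consider the family of translates $u_\rho := (a+\rho, f)$ for $\rho \in \R$. Because $\Jt$ is $\R$-invariant, each $u_\rho$ is $\Jt$-holomorphic, has the same asymptotic orbits, and defines an element of $pr^{-1}(\Jt)$. The key claim is that the $u_\rho$ are pairwise inequivalent in $\ncal$: if some $u_\rho$ with $\rho \neq 0$ were related to $u$ by a biholomorphism $\phi$ of $\dot\Sigma$, then iterating shows $Du$ would have to be everywhere tangent to the $\R$-direction of translation, i.e. $Du(T\dot\Sigma) \subset \R\partial_\tau \oplus \R R_\A$; together with the Cauchy--Riemann equation this forces $u(\dot\Sigma)$ to lie inside $\R \times \gamma_0$ for a single Reeb orbit $\gamma_0$, which gives $\pi \circ Du \equiv 0$ and contradicts the hypothesis. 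Hence the map $\rho \mapsto [u_\rho]$ is an injective smooth curve in $pr^{-1}(\Jt)$ through $u$.

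Consequently $\dim_u pr^{-1}(\Jt) \geq 1$, and since at a regular value this dimension equals $\mbox{ind}(u) = (s+1-\chi(\Sigma)) + \czm(\gamma) - \sum_{i=1}^s \czm(\gamma_i)$, the desired inequality follows. The only non-routine step is the characterization of $\R$-invariant curves as those with $\pi \circ Du \equiv 0$; everything else is a direct application of the Sard--Smale machinery already set up by the preceding theorem and corollary.
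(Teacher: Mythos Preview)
Your argument is correct and is exactly the ``usual reasons'' the paper defers to: the paper itself gives no proof of this corollary, stating only that it follows from Theorem~1.10 of \cite{HWZ3} ``for the usual reasons described in the text of Section~1 in \cite{HWZ3},'' and your Sard--Smale application followed by the nontriviality of the $\R$-translation action is precisely that standard argument. The only step worth sharpening is the ``iterating'' justification for freeness; it is cleaner to argue infinitesimally that the section $\partial_\tau\in\ker D\bar\partial_J(j,u)$ cannot lie in $du\big(\mathfrak{aut}(\dot\Sigma,j)\big)$, since $du(\xi)=\partial_\tau$ would give $\pi\circ du(\xi)=0$ and hence (by $J$-linearity of $\pi\circ du$) also $\pi\circ du(j\xi)=0$ on the dense set where $\xi\neq 0$, forcing $\pi\circ Du\equiv 0$.
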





The next result that we need is Theorem 4.2 from \cite{HT2}, which will be used to obtain regularity for all finite energy cylinders of index $\leq 2$ in Section \ref{arbitrary}.

\begin{theorem}[Hutchings-Taubes \cite{HT2}]\label{HT2thm}
Let $(M, \A)$ be a nondegenerate contact 3-manifold.   If the $\A$-compatible almost complex structure $\Jt$ on $\R \times M$ is generic, then all somewhere injective asymptotically cylindrical pseudoholomorphic curves of index $\leq 2$ are immersed.
\end{theorem}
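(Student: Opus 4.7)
The approach is a parametric transversality argument applied to the universal moduli space of somewhere injective curves carrying a marked critical point, combined with the Micallef--White local model, and (for the borderline case) the relative adjunction formula of \cite{Hu2}.

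First I recall the Micallef--White representation theorem: near any critical point $z_0$ of a somewhere injective $\Jt$-holomorphic curve $u:(\ds,j)\to(\R\times M,\Jt)$ there exist local holomorphic coordinates on the domain and on a neighborhood of $u(z_0)$ such that $u(z)=(z^k,\phi(z))$ with $k\geq 2$ and $\phi(z)=O(z^{k+1})$. Hence critical points are isolated and the order of $du$ at $z_0$ is a well-defined integer $k-1\geq 1$. For each integer $m\geq 1$, form the universal moduli space
\[
\mathcal{N}^{(m)}=\{(u,z_0,\Jt)\ :\ u\in\mathcal{M}(\gp;\gm)\ \text{s.i., }\ du\ \text{vanishes to order}\ \geq m\ \text{at}\ z_0\},
\]
equipped with its forgetful projection $\pi:\mathcal{N}^{(m)}\to U_\Delta$ to Floer's Banach manifold of $\A$-compatible almost complex structures.

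The crucial step is to show that $\mathcal{N}^{(m)}$ is a smooth Banach submanifold by verifying surjectivity of the linearization of the augmented defining operator $(u,z_0,\Jt)\mapsto(\bar\partial_{\Jt}u,\ j^{m-1}du(z_0))$. Splitting tangent vectors as $(\eta,\dot z_0,Y)$ into variations of curve, puncture, and almost complex structure respectively, the first component is the standard linearized Cauchy--Riemann operator $D_u\eta+\tfrac12 Y\circ du\circ j$ while the second captures the $(m-1)$-jet of $du$ at $z_0$. Surjectivity in the jet factor exploits perturbations $Y$ of $\Jt$ supported in a small neighborhood of $u(z_0)$, combined with a Carleman-type unique continuation argument that rules out nontrivial $L^2$-cokernel elements. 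I expect this to be the main technical obstacle: one must show that the $\ce$-class of perturbations is rich enough to realize arbitrary jet values of $du$ at $z_0$ modulo the image of $D_u$, and some care is required because $u$ need not be injective at $z_0$ itself, though it is injective on a dense set of nearby points by somewhere injectivity and behaves near $z_0$ as dictated by the Micallef--White normal form.

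Finally, a direct Fredholm index computation yields $\mathrm{index}(\pi)=\mathrm{ind}(u)-(4m-2)$, since the vanishing of $du$ to order $\geq m$ imposes $2m$ complex constraints offset by the real $2$-dimensional freedom of $z_0\in\ds$. Sard--Smale then produces a Baire-generic $\Jt\in U_\Delta$ for which each fiber of $\pi$ is a smooth manifold of the expected dimension. For $m=1$ this is $\mathrm{ind}(u)-2$, negative when $\mathrm{ind}(u)\leq 1$, so such curves are generically immersed. The borderline case $\mathrm{ind}(u)=2$ produces a generically $0$-dimensional critical stratum; one rules it out by combining the above transversality with the relative adjunction formula (cf.\ Section 4.4 of \cite{Hu2} and equation (\ref{adjunctionfun})): the presence of a critical point forces a strictly positive singularity count $\Delta(u)\geq 1$, which contributes $2\Delta(u)$ to the writhe--adjunction defect and translates into a lower bound on the effective index strictly exceeding $2$, contradicting the index-$2$ hypothesis and completing the proof.
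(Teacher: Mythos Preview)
The paper does not prove this theorem; it merely quotes it as Theorem~4.2 of \cite{HT2}. So there is no ``paper's own proof'' to compare against, and your attempt must be judged on its own merits.

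Your parametric transversality setup is standard and correct for the range $\ind(u)\leq 1$: the universal space of somewhere injective curves with a marked critical point projects to $U_\Delta$ with Fredholm index $\ind(u)-2$, and Sard--Smale makes the corresponding stratum generically empty when this is negative. This part is fine and is essentially the content of Proposition~\ref{Z} (Wendl's Corollary~3.17).

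Your treatment of the borderline case $\ind(u)=2$, however, does not work. You correctly observe that the critical stratum is then generically $0$-dimensional, and you correctly note that a critical point contributes positively to the singularity count $\Delta(u)$ in the relative adjunction formula. But the adjunction formula relates $\Delta(u)$ to the writhe and the relative self-intersection number $Q_\Phi(u)$, not to the Fredholm index. There is no mechanism by which $\Delta(u)\geq 1$ forces $\ind(u)>2$; indeed, in a general almost complex $4$-manifold without extra structure, somewhere injective index-$2$ curves \emph{can} generically carry a single critical point, consistent with $2Z(du)\leq\ind(u)$. Your claimed ``translation into a lower bound on the effective index'' is simply not there.

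What actually closes the gap in the symplectization setting is the free $\R$-action by target translation. The stratum $\{(u,z_0):du(z_0)=0\}$ is $\R$-invariant because $d(\tau_c\circ u)=du$, and the action is free on curves that are not trivial cylinders (which are immersed anyway). A nonempty $0$-manifold cannot carry a free $\R$-action, so for generic $\Jt$ the index-$2$ critical stratum is empty. This is the ingredient specific to $\R\times M$ that your argument is missing; once you insert it in place of the adjunction step, the proof goes through.
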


\subsection{Factoring multiply covered curves through simple curves}

In this section we provide a precise statement and proof of the folk theorem that any non-constant asymptotically cylindrical curve factors through a somewhere injective curve.    As a result we can conclude that if an asymptotically cylindrical \curve \ is not somewhere injective then it is necessarily multiply covered.  The author learned this proof from Chris Wendl.\footnote{The proof of Theorem \ref{multiplycovered} is modeled on the blog post by Chris Wendl, \\
$\mbox{\scriptsize https://symplecticfieldtheorist.wordpress.com/2014/12/10/somewhere-injective-vs-multiply-covered/}$ \\
This was in turn modeled on the ``shorter'' proof of Proposition 2.5.1 \cite{MS2}.  The ``longer'' proof of Proposition 2.5.1 was adapted to give a proof of Theorem \ref{multiplycovered} for finite energy planes,  subsuming the  11-page appendix of \cite{HWZ2}.  This longer proof extends to asymptotically cylindrical pseudoholomorphic curves we are considering, but this paper is already long enough. }  Our proof follows along the proof given for the closed case in \cite{MS2}, appealing to the local results of \cite{MiWh} and the asymptotic behavior of nonconstant finite energy curves near a puncture in \cite{Sief}.

\begin{theorem}\label{multiplycovered}
Assume $(W,J)$ is a symplectic cobordism with cylindrical ends and 
\[
u : (\dot{\Sigma},j) \to (W,J) 
\]
is a nonconstant asymptotically cylindrical $J$-holomorphic curve asymptotic to nondegenerate Reeb orbits. Let $(\Sigma,j)$ denote the closed Riemann surface from which $(\dot{\Sigma},j)$ is obtained by deleting finitely many points. Then there exists a factorization 
\[
u = v \circ \varphi, 
\]
where
\begin{itemize}
\item $v : (\dot{\Sigma}',j') \to (W,J)$ is an asymptotically cylindrical $J$-holomorphic curve that is embedded outside a finite set of critical points and self-intersections, and
 
\item $\varphi : (\Sigma,j) \to (\Sigma',j')$ is a holomorphic map of positive degree, where $(\dot{\Sigma}',j')$ is obtained from the closed Riemann surface $(\Sigma',j')$ by deleting finitely many points.
\end{itemize}
\end{theorem}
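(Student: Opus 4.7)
The plan is to mirror the closed-case argument for Proposition 2.5.1 of \cite{MS2}, with the behavior at punctures governed by Siefring's asymptotic analysis and the interior local structure by Micallef-White. The output will be an explicit factorization through the "fiber" equivalence relation defined by the germs of $u$.

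First, I would define the injective locus
\[
\mathrm{Inj}(u) := \{z \in \ds : du(z) \neq 0 \text{ and } u^{-1}(u(z)) = \{z\}\}
\]
and show that its complement in $\ds$ is discrete. Openness of $\mathrm{Inj}(u)$ is immediate once one knows that $u$ is proper away from the punctures (so that $u^{-1}(u(z))$ stays close to $\{z\}$), and that the condition $du(z) \neq 0$ makes $u$ locally injective near $z$. Density requires two local inputs: in the interior, the Micallef-White representation writes $u$ near any point as a holomorphic branched cover of an embedded $J$-holomorphic disk, so critical and self-intersection points are isolated; near each puncture, Siefring's asymptotic formula writes $u$ as an exponentially small perturbation of an iterated Reeb cylinder, so self-intersections and critical points cannot accumulate at the punctures. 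This rules out clustering and yields the discreteness statement.

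Next I would define an equivalence relation on $\ds$ by declaring $z \sim z'$ iff there exists a germ of biholomorphism $\phi : (U,z) \to (U',z')$ with $u|_U = u|_{U'} \circ \phi$. Via Micallef-White, every non-injective point $z$ has a neighborhood on which $u$ factors through a branched cover of an embedded local disk, so the local "same-sheet" map gives the required $\phi$; thus each equivalence class is finite. Unique continuation for $J$-holomorphic maps promotes a germ of $\phi$ at one point to a globally defined biholomorphism wherever it can be continued, and connectedness of $\ds$ together with discreteness of $\ds \setminus \mathrm{Inj}(u)$ implies that over $\mathrm{Inj}(u)$ every equivalence class has the same finite cardinality $k$, which will be the degree of the eventual cover.

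With the equivalence relation in hand, I would set $\ds' := \ds / \sim$ and equip it with the quotient Riemann surface structure; away from the projection of the non-injective set this is transparent, and at the exceptional points the Micallef-White local normal form provides the standard cyclic branched-cover chart. The projection $\varphi : \ds \to \ds'$ is then a holomorphic map of degree $k > 1$, and $u$ descends to a map $v : \ds' \to (W,J)$ which is somewhere injective by the very definition of $\sim$. At each puncture of $\ds$, the Siefring expansion forces $\varphi$ to extend across the puncture with some local degree $\ell \mid k$, sending it to a puncture of $\ds'$ at which $v$ is asymptotically cylindrical over the corresponding iterate of the underlying simple Reeb orbit; filling in both sets of punctures gives the closed surfaces $(\Sigma,j)$ and $(\Sigma',j')$ and completes the factorization.

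The main obstacle is Step one, specifically the discreteness of $\ds \setminus \mathrm{Inj}(u)$ in the presence of punctures. Interior discreteness is a classical consequence of Micallef-White and unique continuation, but near the ends one cannot use compactness, and one genuinely needs the stronger Siefring-type asymptotic convergence (exponential decay controlled by eigenfunctions of the asymptotic operator) to prevent the non-injective set from accumulating at infinity. Once this is in hand, the construction of $\sim$, the quotient $\ds'$, and the extension across punctures and branch points are essentially formal.
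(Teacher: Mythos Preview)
There is a genuine confusion in your first step. You define $\mathrm{Inj}(u) = \{z : du(z)\neq 0 \text{ and } u^{-1}(u(z))=\{z\}\}$ and claim its complement is discrete. But this is exactly what fails for a multiply covered curve: if $u$ factors through a $k$-fold cover with $k>1$, then generically $u^{-1}(u(z))$ has $k$ points, so $\mathrm{Inj}(u)$ is empty (or at best nowhere dense). Your later claim that ``over $\mathrm{Inj}(u)$ every equivalence class has the same finite cardinality $k$'' is then self-contradictory: on $\mathrm{Inj}(u)$ as you defined it, every $\sim$-class has cardinality exactly~$1$. The set whose complement is actually discrete is the \emph{regular locus for the cover}: the points where $du(z)\neq 0$ and every other preimage $z'$ of $u(z)$ is $\sim$-equivalent to $z$ (equivalently, the image germs agree). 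That is the set on which the quotient map is an unbranched covering of degree $k$; $\mathrm{Inj}(u)$ plays no role here.

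Once this is corrected, your outline is essentially the same as the paper's proof, just phrased dually. The paper constructs $\dot{\Sigma}'$ \emph{from the image}: it removes from $u(\dot{\Sigma})$ the images of critical points and of the finite set $\Delta$ of ``isolated'' self-intersections, obtains a $J$-complex submanifold $\ddot{\Omega}\subset W$, takes $v$ to be (the extension of) the inclusion, and recovers $\varphi$ by restriction and removal of singularities. You instead construct $\dot{\Sigma}'$ as the quotient $\dot{\Sigma}/\!\sim$. Both routes rest on the identical analytic inputs---Micallef--White for the interior local normal form and intersection dichotomy, and Siefring's relative asymptotic formulas to prevent accumulation at the punctures and to control $\varphi$ near the ends---and produce the same factorization. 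The image construction is slightly more direct (one never has to verify that the quotient carries a Riemann surface structure, since $\ddot{\Omega}$ inherits $j'$ from $J$), while the quotient construction makes the covering-space picture more explicit; neither buys anything the other does not.
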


\begin{remark} \em
  In the statement of the above theorem we can take $(W,J)$ to be an almost complex manifold with cylindrical ends adapted to stable Hamiltonian structures, cf. \cite{Wtrans} for the definition of such a manifold. We can also relax the assumption that the Reeb orbits are nondegenerate, and merely stipulate that they  be Morse-Bott \cite{Wtrans}.  This result does not apply to asymptotically cylindrical curves with boundary \cite{LL}, e.g. $\partial \Sigma \neq \emptyset$, which are not of interest in this paper.  
  \end{remark}
  
As in the closed case of \cite{MS2}, we construct $\ds'$ (minus some extra punctures) explicitly from $\mbox{im}(u)$ by removing finitely many singular points.   This allows us to take $v$ to be the inclusion,
\[
v: \ds \hookrightarrow W.
\]
As a result the map
\[
\varphi: \ds \to \ds'
\]
is uniquely determined and extends holomorphically over all punctures because of a removal of singularities theorem.  To proceed we need to know how $\mbox{im}(u)$ behaves near each of its singularities.  The demeanor of the curve near its singularities can be classified into the following three types: branchings, intersections, and asymptotics.  The following three lemmas elucidate these behaviors.  Throughout $(\Sigma, j)$ is a closed Riemann surface.

\begin{lemma}[Branchings of curves]\label{lemmabranchings}
Suppose 
\[
u: (\Sigma, j) \to (W, J) 
\]
is a nonconstant pseudoholomorphic curve and $z_0 \in \Sigma$ is a critical point of $u$.  Then a neighborhood $z_0 \in \mathcal{U} \subset \Sigma$ can be biholomorphically identified with the unit disk $\mathbb{D} \subset \C$ such that
\[
u(z)=v(z^k) \mbox{ for } z \in \mathbb{D}=\mathcal{U},
\]
where $k \in \N$ and $v: (\mathbb{D},j_0) \to (W,J)$ is an injective pseudoholomorphic map with no critical points except possibly at the origin.
\end{lemma}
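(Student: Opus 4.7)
The plan is to reduce the statement to classical holomorphic geometry via the local representation theorem of Micallef--White \cite{MiWh} and then apply a Puiseux-type parametrization of analytic curve germs to obtain the desired factorization.

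First I would choose a biholomorphic identification of a neighborhood of $z_0$ with $\mathbb{D}$ sending $z_0$ to $0$. Because $u$ is nonconstant, the similarity principle applied to $du$ implies that critical points of $u$ are isolated; after shrinking $\mathcal{U}$, we may assume that $0$ is the only critical point of $u$ in $\mathbb{D}$. The Micallef--White theorem then provides $C^{1,\alpha}$ coordinates on a neighborhood of $u(z_0) \in W$ in which $u$ becomes a genuinely holomorphic map $\tilde{u}: \mathbb{D} \to \C^n$ with respect to the standard complex structure on $\C^n$, with an isolated critical point at $0$.

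Next I would analyze the image $\tilde{u}(\mathbb{D})$. After shrinking $\mathbb{D}$, this is a one-dimensional analytic subvariety of a neighborhood of $0 \in \C^n$; its connectedness forces it to consist of a single irreducible branch through the origin. The Puiseux parametrization theorem produces an injective holomorphic map $\tilde{v}: \mathbb{D} \to \C^n$ onto this branch, immersed on $\mathbb{D} \setminus \{0\}$. Since $\tilde{u}$ and $\tilde{v}$ parametrize the same irreducible germ, there exists a nonconstant holomorphic $\varphi: \mathbb{D} \to \mathbb{D}$ with $\varphi(0)=0$ and $\tilde{u} = \tilde{v} \circ \varphi$; a further holomorphic reparametrization of $\mathcal{U}$ may be chosen so that $\varphi(z) = z^k$ for $k = \mbox{deg}(\varphi) \in \N$. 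Transporting $\tilde{v}$ back through the Micallef--White coordinate chart on $W$ then defines the candidate $v: \mathbb{D} \to W$ satisfying $u = v \circ \varphi$.

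The main obstacle will be verifying that this $v$ is genuinely $J$-holomorphic, rather than merely holomorphic in the auxiliary Micallef--White coordinates (in which $J$ need not agree with the standard complex structure away from $u(z_0)$). The way I would handle this is to observe that the image of $v$ coincides as a set with $u(\mathbb{D})$ near $u(z_0)$, hence is a $J$-holomorphic subvariety of $W$. Since $\tilde{v}$ is an immersion on $\mathbb{D} \setminus \{0\}$, the pullback structure $v^*J$ must agree with the standard complex structure on the punctured disk, forcing $v$ to be $J$-holomorphic there; the $C^{1,\alpha}$ regularity supplied by Micallef--White then allows the Cauchy--Riemann equation to be extended across $0$. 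Injectivity of $v$ and the absence of critical points on $\mathbb{D} \setminus \{0\}$ are inherited directly from the corresponding properties of $\tilde{v}$, completing the factorization.
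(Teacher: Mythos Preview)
Your proposal is correct and takes the same route as the paper, which gives no detailed argument but simply cites Theorem~6.1 of Micallef--White \cite{MiWh} (and its softer analogue, Theorem~2.114 of \cite{Wnotes}), noting that pseudoholomorphic curves locally behave like holomorphic ones. One minor tightening: once you have the factorization $u = v \circ (z \mapsto z^k)$, the $J$-holomorphicity of $v$ on $\mathbb{D}\setminus\{0\}$ follows directly from that of $u$ because $z \mapsto z^k$ is a holomorphic local diffeomorphism there, so there is no need to argue via the pullback $v^*J$; removable singularities then extends $v$ across the origin.
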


The above result follows primarily via Theorem 6.1 of \cite{MiWh}.   A weaker, less analytically involved version of Lemma \ref{lemmabranchings} is given in Theorem 2.114 in Section 2.13 of \cite{Wtrans}. The main idea in both proofs is that every almost complex structure is locally $C^\infty$-close to an integrable complex structure so locally the behavior of pseudoholomorphic curves should match the integrable case.

The above local singularity formula of Micaleff-White yields a coordinate system in which a pseudoholomorphic curve can be written locally as a complex polynomial.  Theorem 7.1 of \cite{MiWh} allows us to use this local formulation when investigating any intersection point of finitely many pseudoholomorphic curves to obtain the following result.  For proofs and recollection of the Micaleff-White results aimed at a symplectic geometer, we direct the inquisitve to the wonderful exposition found in Appendix E of \cite{MS2}.

\begin{lemma}[Intersections of curves]
Suppose 
\[
\begin{array}{lcl}
u: (\Sigma, j) &\to& (W, J), \\
v:(\Sigma', j') & \to & (W,J) \\
\end{array}
\]
are two nonconstant pseudoholomorphic curves with an intersection
\[
u(z)=v(z').
\]
Then there exist neighborhoods $z\in \mathcal{U} \subset \Sigma$ and $z' \in \mathcal{U'} \subset \Sigma'$ such that either
\[
\begin{array}{l c l}
u( \mathcal{U}) =  v(\mathcal{U'}) & \mbox{or} & u( \mathcal{U}\setminus \{z\}) \cap v( \mathcal{U'} \setminus \{z' \}) = \emptyset. \\
\end{array}
\]
\end{lemma}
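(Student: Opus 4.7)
The plan is to reduce the dichotomy for images to a classical algebraic-geometric fact about complex-analytic curve germs via the Micallef--White simultaneous normal form (Theorem 7.1 of \cite{MiWh}), which the paper already singles out as the key input.

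First, fix the intersection point $p := u(z) = v(z') \in W$. I would apply Theorem 7.1 of \cite{MiWh} to the pair $(u,v)$ at their common value $p$: this furnishes a single local chart on $W$ centered at $p$, together with local holomorphic uniformizing coordinates $\zeta$ near $z \in \Sigma$ and $\eta$ near $z' \in \Sigma'$, in which \emph{both} $u$ and $v$ simultaneously become complex-analytic (in fact polynomial) maps into $\C^n$. By Lemma \ref{lemmabranchings}, each of $u, v$ is locally the branched cover of an injective immersion, so after shrinking the parameter domains to small disks $\mathcal{U}, \mathcal{U'}$ the images $u(\mathcal{U}), v(\mathcal{U'})$ are irreducible one-dimensional complex-analytic subsets of the image chart in $\C^n$, meeting at the origin.

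Next I would invoke the standard dichotomy for complex-analytic germs: two irreducible one-dimensional complex-analytic germs in $\C^n$ either coincide as germs or meet in a germ of dimension zero, i.e.\ at an isolated point. This follows because, if $X \neq Y$ are distinct irreducible one-dimensional analytic germs, then $X \cap Y$ is a proper analytic subgerm of $X$, hence of strictly smaller dimension. In the first case, $u(\mathcal{U}) = v(\mathcal{U'})$ after a further shrinking, giving the first alternative of the lemma. In the second case, $u(\mathcal{U}) \cap v(\mathcal{U'})$ consists of the single point $\{p\}$ after shrinking; combining this with the fact (from Lemma \ref{lemmabranchings}) that on small enough neighborhoods $u^{-1}(p) \cap \mathcal{U} = \{z\}$ and $v^{-1}(p) \cap \mathcal{U'} = \{z'\}$, we obtain
\[
u(\mathcal{U} \setminus \{z\}) \cap v(\mathcal{U'} \setminus \{z'\}) = \emptyset,
\]
which is the second alternative.

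The substantive work is entirely concentrated in the application of Theorem 7.1 of \cite{MiWh}, whose proof relies on the similarity principle together with a careful simultaneous choice of target coordinates tailored to both intersecting curves; this is the main obstacle, but it is black-boxed by direct citation. The remainder of the argument is routine local complex analysis. One subtle point worth highlighting in the write-up is the order of shrinkings: one must first shrink $\mathcal{U}, \mathcal{U'}$ to validate both the M--W normal form and the branching-lemma conclusions, and then shrink further in the isolated-intersection alternative so that the preimages of $p$ under $u$ and $v$ are reduced to $\{z\}$ and $\{z'\}$ respectively. Both shrinkings are possible because the preimage of $p$ under each nonconstant pseudoholomorphic map is discrete.
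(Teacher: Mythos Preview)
Your proposal is correct and follows essentially the same approach as the paper: the paper's own argument consists entirely of citing Theorem~7.1 of \cite{MiWh} (with a pointer to Appendix~E of \cite{MS2} for exposition), and you use exactly this result as the black-box input, then supply the standard complex-analytic germ dichotomy that the paper leaves implicit. If anything, you have written out more of the reduction than the paper does.
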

A weaker version of the above result is stated and proven as Theorem
 2.116 of \cite{Wnotes}.  The above two lemmas are sufficient to prove the analogue of Theorem \ref{multiplycovered} in the closed setting.  To proceed to work with asymptotically cylindrical curves we need the following generalized version of the aforementioned Micaleff-White results for punctured pseudoholomorphic disks, e.g. half cylinders.  The following lemma follows from the set of relative asymptotic formulas found in Theorems 2.2 and 2.3 of \cite{Sief}.   

\begin{lemma}[Asymptotics of curves]\label{lemmaasymptotics}
Assume 
\[
u: (\ds, j) \to (W, J) 
\]
is an asymptotically cylindrical curve and $z_0 \in \Sigma$ is a puncture at which the asymptotic orbit is nondegenerate.  Then a punctured neighborhood $\dot{\mathcal{U}}$ of $z_0$ in $\ds$ can be biholomorphically identified with the puncture unit disk $\dot{\mathbb{D}}= \mathbb{D} \setminus \{ 0 \}\subset \C$ such that
\[
u(z)=v(z^k) \mbox{ for } z \in \dot{\mathbb{D}}=\dot{\mathcal{U}},
\]
where $k \in \N$ and $v: (\dot{\mathbb{D}},j_0) \to (W,J)$ is an embedded asymptotically cylindrical map.
\bigskip
\noindent If 
\[
w: (\ds', j') \to (W, J) 
\]
is another asymptotically cylindrical curve with a puncture $z_0' \in \Sigma'$, then the images of $u$ near $z_0$ and $w$ near $z_0'$ are either identical or disjoint.
\end{lemma}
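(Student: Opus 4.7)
The analytic engine is Siefring's asymptotic formula (Theorem 2.2 of \cite{Sief}) together with the relative version (Theorem 2.3). First I would identify the punctured neighborhood $\dot{\mathcal{U}}$ biholomorphically with a positive half-cylinder $[R,\infty)\times S^1$, so that $u$ takes the form $u=(a,f)$ with asymptotic limit the (possibly multiply covered) nondegenerate $T$-periodic Reeb orbit $\gamma=\bar\gamma^m$, where $\bar\gamma$ is simple. Siefring's formula then gives the expansion
\[
u(s,t)=\exp_{(Ts,\gamma(Tt))}\!\bigl(e^{\lambda s}\eta(t)+r(s,t)\bigr),
\]
where $\lambda<0$ is an eigenvalue of the asymptotic operator $A_\gamma$ on $\Gamma(\gamma^*\xi)$, $\eta\in\ker(A_\gamma-\lambda)$ is nowhere vanishing, and $r$ decays in $C^\infty$ strictly faster than $e^{\lambda s}$. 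This is the starting point for everything.

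Next I would extract the covering number $k$. Let $G\subset S^1$ be the subgroup of $t_0$'s such that $\eta(t+t_0)=\eta(t)$; because $\gamma=\bar\gamma^m$ and eigenfunctions of $A_\gamma$ split according to characters of $\Z/m$, the group $G$ is a cyclic subgroup $\Z/k$ with $k\mid m$, generated by the rotation $t\mapsto t+1/k$. I would then upgrade this symmetry from the leading term to $u$ itself on a neighborhood of infinity: apply the relative asymptotic formula of Theorem 2.3 in \cite{Sief} to the pair $u(s,t)$ and $u(s,t+1/k)$; both have identical leading exponential term $e^{\lambda s}\eta(t)$, so their difference decays strictly faster than $e^{\lambda s}$. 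If the two maps were not eventually equal, the relative formula would produce a strictly smaller eigenvalue $\lambda'<\lambda$ and a nowhere-vanishing eigenfunction $\xi$ as the new leading term; iterating shows the leading coefficients all vanish, contradicting nontriviality. Hence $u(s,t+1/k)=u(s,t)$ on a neighborhood of $\infty$, so $u$ descends through the quotient $[R',\infty)\times S^1 \to ([R',\infty)\times S^1)/(\Z/k)$, which is again a punctured disk, and in the original holomorphic coordinate $z$ this quotient is exactly the branched cover $\varphi(z)=z^k$. The descended map $v$ is embedded on a sufficiently small punctured neighborhood of its puncture: a would-be pair of distinct preimages of a single point would pull back via $\varphi$ to an additional symmetry of $u$ not in $\Z/k$, contradicting the maximality of $k$ chosen as the full stabilizer of $\eta$.

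For the second statement, consider $u$ near $z_0$ and $w$ near $z_0'$ with asymptotic orbits $\gamma,\gamma'$. If $\gamma$ and $\gamma'$ are geometrically distinct as subsets of $M$, then for $s$ sufficiently large the two ends lie in disjoint tubular neighborhoods of the trivial cylinders $\R\times\gamma$ and $\R\times\gamma'$, so after shrinking $\dot{\mathcal{U}},\dot{\mathcal{U}}'$ the images are disjoint. If the two asymptotic orbits trace the same loop in $M$ (possibly with different multiplicities or starting points), I would apply Siefring's relative asymptotic formula to the two ends: either they agree on a common neighborhood of infinity, in which case the images near the punctures are identical, or their vector-valued difference has leading behavior $e^{\mu s}(\xi(t)+o(1))$ with $\mu<0$ and $\xi$ nowhere vanishing. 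In the latter case, for $s$ large the ends are separated by a term that is uniformly nonzero transversely to the asymptotic cylinder, so their images do not intersect on the (further shrunk) punctured neighborhoods; combined with the local Micaleff–White dichotomy from Lemma \ref{lemmabranchings} applied at any putative intersection point, this forces the images to be disjoint on small enough neighborhoods.

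The main obstacle is Step 2, i.e.\ promoting the rotational symmetry of the leading eigenfunction $\eta$ to a genuine symmetry of $u$ on a full neighborhood of the puncture. This requires Siefring's relative asymptotic formula together with the ``unique continuation in the eigenspace filtration'' argument, ruling out a cascade of progressively smaller eigenvalues as leading differences. The geometric content of the Lemma is essentially encoded in this analytic fact, and once it is in place the factorization $u=v\circ\varphi$ with $\varphi(z)=z^k$ and the disjoint-or-identical dichotomy between different ends follow in a parallel manner.
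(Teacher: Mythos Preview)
The paper's own proof is a one-line citation to Siefring's Theorems 2.2 and 2.3, so your sketch is necessarily more detailed, and your overall strategy---extract the factorization and the disjoint-or-identical dichotomy from the absolute and relative asymptotic formulas---is the intended one. However, your Step 2 contains a genuine gap. You define $k$ as the order of the stabilizer of the leading eigenfunction $\eta$, then try to promote this to a $\Z/k$-symmetry of $u$ via the relative formula. But the relative formula only says that if $u(s,t)\neq u(s,t+1/k)$ then their difference has a nowhere-vanishing leading term $e^{\lambda' s}\xi(t)$ with $\lambda'<\lambda$; this is not a contradiction, and there is nothing to ``iterate.'' Concretely: if $\gamma=\bar\gamma^2$ and $\eta$ happens to be an \emph{even} eigenfunction of $A_{\bar\gamma^2}$ (one that descends to $A_{\bar\gamma}$), then $\eta$ has $\Z/2$-symmetry, yet $u$ may well carry an odd next-order term and be a genuinely embedded end rather than a double cover. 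Your $k$ would then be $2$ while the correct value is $1$, and $u$ does not descend through $z\mapsto z^2$ at all.

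The fix is to define $k$ directly as the largest divisor of $m$ for which $u(s,t+1/k)=u(s,t)$ near infinity; this is decidable by applying the relative formula to the finitely many shifts $t\mapsto t+j/m$. Embeddedness of the quotient $v$ then follows from maximality of $k$: any self-intersection of $v$ near the puncture lifts to $u(s,t)=u(s',t')$, the $\R$-component forces $s-s'\to 0$, the $M$-component forces $t-t'\in\frac{1}{m}\Z$, and for each such shift not in $\frac{1}{k}\Z$ the relative formula gives disjointness for all sufficiently large $s$. Note also that your one-line embeddedness argument conflates a single coincidence $v(p_1)=v(p_2)$ with a global rotational symmetry of $u$; one genuinely needs this accumulation-plus-relative-formula step to pass from the former to the latter. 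Your treatment of the second statement is essentially correct, modulo spelling out how the relative formula is applied when the two ends are asymptotic to different multiples $\bar\gamma^{m_1}$ and $\bar\gamma^{m_2}$ of the same simple orbit.
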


\begin{remark}\em
In Lemma \ref{lemmaasymptotics} the second statement is not particularly remarkable provided $u$ and $w$ are asymptotic to different orbits at the two punctures under consideration.  What is remarkable, is that this statement is still true even when both curves are asymptotic to (covers of) the same orbit.  

\bigskip

We note that Siefring's results as well as the above lemma holds when the pseudoholomorphic curve is asymptotic to Reeb orbits which are merely Morse-Bott as opposed to non-degenerate.  These results also hold when we allow $(W,J)$ to be an almost complex manifold with cylindrical ends adapted to stable Hamiltonian structures.  However, the author does not wish to define these notions here, preferring to refer the interested reader to \cite{Wtrans}.
\end{remark}

At last we are ready to legitimize the above folk theorem with the following proof.
\begin{proof}[Proof of Theorem \ref{multiplycovered}]
In this proof we will construct the domain $\ds'$ from the image curve $u(\ds)$ in W.  Let 
\[
\mbox{Crit}(u) = \{ z \in \ds \ | \ du(z) =0\}
\]
denote the set of critical points of $u$.  

\medskip

\noindent Define $\Delta \subset \ds$ to be the set of all points $z \in \ds$ such that there exists
\begin{itemize}
\item $z'\in \ds$ such that $z \neq z'$ and $u(z)=u(z')$, but also
\item neighborhoods $z\in \mathcal{U} \subset \ds$ and $z' \in \mathcal{U'}\subset \ds$ with $ u( \mathcal{U}\setminus \{z\}) \cap u( \mathcal{U'} \setminus \{z' \}) = \emptyset.$
 \end{itemize}
By Lemmas \ref{lemmabranchings}-\ref{lemmaasymptotics} both of these sets are discrete, meaning they have no accumulation points near the punctures.  As a result both are finite sets.  

Let 
\[
\ddot{\Sigma}:=\ds \setminus \{ \mbox{Crit}(u) \cup \Delta \}.
\]
Moreover, the set
\[
\ddot{\Omega}=u(\ds \setminus \{ \mbox{Crit}(u) \cup \Delta \}) \subset W
\]
is then a smooth submanifold of $W$ with $J$-invariant tangent spaces.  Thus   $\ddot{\Omega}$ inherits a natural complex structure $j'$ for which the inclusion
\[
\iota: (\ddot{\Omega},j') \hookrightarrow (W,J)
\]
is pseudoholomorphic.  

\medskip

Next we explain how to construct the desired Riemann surface $(\dot{\Sigma'}, j')$, from $(\ddot{\Omega}, j')$ by appropriately adding in points to $\ddot{\Omega}$ and extending the almost structure.  Let
\[
\widehat{\Delta} =  \{ \mbox{Crit}(u) \cup \Delta \} / \sim
\]
where two points in  $\{ \mbox{Crit}(u) \cup \Delta \}$ are defined to be equivalent whenever they have neighborhoods in $\ds$ with identical images under $u$.   Then for each $[z] \in \widehat{\Delta}$, the branching lemma, Lemma \ref{lemmabranchings}, provides an injective $J$-holomorphic map
\[
u_{[z]} :(\mathbb{D}, j_0) \to (u(\mathcal{U}), J)
\]
where $\mathcal{U}$ is an appropriate neighborhood of $z$.  We define  $(\dot{\Sigma'}, j')$ as follows.

\medskip

Define
\[
\ds':= \ddot{\Omega} \bigcup_\Phi \left( \bigsqcup_{[z] \in \widehat{\Delta}} \mathbb{D} \right),
\]
where the gluing map $\Phi$ is the disjoint union for each $[z] \in \widehat{\Delta}$ of the maps
\[
u_{[z]}: \mathbb{D}\setminus \{0\} \to \ddot{\Omega}. 
 \]
Moreover, since $u_{[z]}$ is holomorphic, the complex structure $j'$ extends from $\ddot{\Omega}$ to $\dot{\Sigma'}.$

We have already established that $u_{[z]}$ extends over the whole disk,
\[
u_{[z]}: (\mathbb{D}, j_0) \to (W, J).
\]
Thus the inclusion
\[
\iota: (\ddot{\Omega}, j') \hookrightarrow (W, J)
\]
must extend to a pseudoholomorphic map
\[
v: (\ds', j') \to (W,J)
\]
which restricts to $\ddot{\Omega}$ as an embedding and otherwise has at most finitely many critical points and double points.  Moreover, the following restriction of $u$ defines a holomorphic map
\[
u\arrowvert_{\ds \setminus \{ \mbox{\scriptsize Crit}(u) \cup \Delta \}} \to (\ddot{\Omega}, j').
\] 
By the removal of singularities the map $u\arrowvert_{\ds \setminus \{ \mbox{\scriptsize Crit}(u) \cup \Delta \}}$ extends to a proper holomorphic map
\[
\varphi: (\ds, j) \to (\ds', j')
\]
such that $u =v \circ \varphi$.  

To finish proving the last part of the theorem we observe that the first statement in the asymptotics lemma implies that the complements of certain compact subsets in $(\dot{\Sigma},j)$ and $(\dot{\Sigma}',j')$ can each be identified biholomorphically with punctured disks $\dot{\mathbb D}$ on which $\varphi(z) = z^k$ for various $k \in {\mathbb N}.$  As a result we can glue disks to $\dot{\Sigma}'$ so that it becomes the complement of a finite set of punctures in some closed Riemann surface $(\Sigma',j'). $ Thus $\varphi$ extends to a nonconstant holomorphic map $(\Sigma,j) \to (\Sigma',j')$.
\end{proof}

\subsection{Automatic transversality}\label{trans}
The criterion for automatic transversality of asymptotically cylindrical curves in 4-dimensional symplectic cobordisms $(W,J)$ with cylindrical ends is expressed in terms of the asymptotic data, homological properties and number of critical points of the curve in \cite{Wtrans}.  When these numerical conditions are met one can demonstrate that multiply covered curves will be cut out transversally by the Cauchy-Riemann equations without genericity assumptions on $J$, providing these geometrically natural moduli spaces the structure of globally smooth orbifolds. Before stating the key results we briefly review the setting and notation of interest.  In the simpler setting of the symplectization $\R \times M$ equipped with an $\A$-compatible $J$, proofs of the automatic transversality results are substantially shorter and appear in Section 4 of \cite{HNdyn}.

If the asymptotic orbits of a curve $u \in \calm(\ga; \ga_1,...,\ga_s)$ are all nondegenerate, then the \textbf{virtual dimension} of $\calm(\ga; \ga_1,...,\ga_s)$ is equal to the \textbf{index}, which is given by
\begin{equation}\label{w-index}
{\ind}(u) = -\chi(\ds)   + \mu^\Phi(u) + 2c_1^\Phi(u^*TW),
\end{equation}
as in \cite{Wtrans}, with $\chi(\ds) =(2-2g - \#\g^+ - \#\g^-)$ and $\Phi$ a trivialization of $\xi$ along the asymptotic orbits of $u$.   In particular, $c_1^\Phi(u^*TW)$ is the relative first Chern number of $(u^*T, J) \to \ds $ with respect to a suitable choice of $\Phi$ along the ends and boundary.  

Additionally, with respect to $\Phi$,
  \[
\mu^\Phi(u) + 2c_1^\phi(u^*TW)= \mu_{CZ}^\Phi(\gamma) - \displaystyle \sum_{i=1}^s\mu_{CZ}^\Phi(\gamma_i).
 \]
 
\begin{remark}\label{trivchoice}\em 
We can always choose a trivialization $\Phi$ (fixed up to homotopy) such that $c_1^\Phi(u^*TW)=0$; see Section 1.1.1 and Remarks \ref{trivtpy}-\ref{vanishingtriv}.  More precisely, we choose a trivialization $\Phi$ so that $c_1^\Phi(v^*T(\R \times M))=0$ for a somewhere injective curve genus 0 asymptotically cylindrical curve $v$ with one positive puncture and at least one negative puncture.  This implies for any (branched) cover  $u := \varphi \circ v$,  that $c_1^\Phi(u^*T(\R \times M))=0$. 

We fix such a trivialization $\Phi$ and will write $\czm$ as a shorthand for $\mu_{CZ}^\Phi$.  When $g=0$, as in the setting of interest to this paper, without loss of generality we can work with the following index formula 

   \begin{equation}\label{indexformula}
 {\ind}(u) = -(1-s) + \czm(\gamma) - \displaystyle \sum_{i=1}^s\czm(\gamma_i).
 \end{equation}
 \end{remark}
With this notation understood we move onwards. If $(\Sigma, j) = (S^2, j_0)$ and $\g=\{x\}$ or $\{x, y\}$, then $\ds$ is biholomorphic to the complex plane or the cylinder.  In these cases we must say a little more about the role of the moduli space of these domains before proceeding with the usual functional analytic set up for asymptotically cylindrical pseudoholomorphic curves.   Doing so requires a slight detour to visit some classical results regarding moduli spaces of Riemann surfaces as they are related to the analysis of Cauchy-Riemann type operators, following \cite[\S 3.1]{Wtrans}.

Let $\mathcal{J}(\Sigma)$ denote the space of smooth complex structures on $\Sigma$ that induce the given orientation.  Denote 
\[
\mbox{Diff}_+(\Sigma, \Gamma)
\]
 to be the group of orientation preserving diffeomorphisms on $\Sigma$ that fix $\Gamma$.  Let
\[
\mbox{Diff}_0(\Sigma, \Gamma)\subset\mbox{Diff}_+(\Sigma, \Gamma)
\]
be the subgroup of those diffeomorphisms homotopic to the identity.  Both of these groups act on $\mathcal{J}(\Sigma)$ by
\[
(\varphi, j) \mapsto \varphi^* j.
\]
The \textbf{Teichm\"uller space} of $\ds$ is a smooth finite dimensional manifold defined by
\[
\mathscr{T}(\ds) := \mathcal{J}(\Sigma) / \mbox{Diff}_0(\Sigma, \Gamma),
\]
whose quotient by the mapping class group
\[
\mathscr{M}(\ds) := \mbox{Diff}_+(\Sigma, \Gamma)/\mbox{Diff}_0(\Sigma, \Gamma)
\]
yields the \textbf{moduli space of Riemann surfaces} 
\[
\mathcal{M}(\ds) := \mathscr{T}(\ds) / \mathscr{M}(\ds)  = \mathcal{J}(\Sigma)/ \mbox{Diff}_+(\Sigma, \Gamma).
\]
The moduli space of Riemann surfaces $ \mathcal{M}(\ds) $ of  genus $g$ and $\# \g$ interior marked points and no boundary components is an orbifold and in general has the same dimension as $\mathscr{T}(\ds)$.  We say that $\ds$ is \textbf{stable} whenever $\chi(\ds):=2-2g-\#\g <0$, in which case
\[
\mbox{dim}  \mathcal{M}(\ds) = 6g  + 2 \#\g = -3 \chi(\ds) - \# \g,
\]
and the \textbf{automorphism group} 
\[
\mbox{Aut}(\ds, j) = \{ \varphi \in \mbox{Diff}_+(\Sigma, \Gamma) \ | \ \varphi^* j =j \} 
\]
is finite for any $j \in \mathcal{J}(\Sigma)$, though the order may depend on $j$.  

The cylinder $\R \times S^1$ is non-stable and 
\[
\mathcal{M}(\R \times S^1) = \{ [i] \}, \ \ \ \mbox{dim Aut}(\R \times S^1,i)=2.
\]
The plane $\C$ is also non-stable and
\[
\mathcal{M}(\C) = \{ [i] \}, \ \ \ \mbox{dim Aut}(\C,i)=4.
\]
Otherwise all other domains of asymptotically cylindrical curves of interest in this paper are stable.  Since the mapping class groups of both the cylinder and plane are trivial we know for $\Sigma = S^2$ and $\g = \{x\}$ or $\{x, y\}$
\[
\mathcal{M}(S^2 \setminus \g ) = \mathscr{T}(S^2 \setminus \g).
\]
Additionally for unstable $\ds$ we have that
\[
\mbox{dim Aut}(\ds,j) - \mbox{dim} \mathcal{M}(\ds) = 3\chi(\ds) + \#\g.
\]
Fixing $p > 2$ the latter is the Fredholm index of the standard linear Cauchy-Riemann operator.

Going back to the $\deebar$ operator, recall that after fixing a complex structure $j$ on $\Sigma$ there is a Banach space  bundle $\mathcal{B} \to \cale$ whose fibers are spaces of complex antilinear bundle maps. The nonlinear Cauchy-Riemann operator $\deebar$ can be expressed as a smooth section of this Banach space bundle,
\[
\deebar: \mathcal{B} \to \cale
\]
\[
\deebar(u)=du + \Jt \circ du \circ j.
\]
whose zeros are parametrization of asymptotically cylindrical pseudoholomorphic curves $u: (\ds, j) \to (W,J)$.  The linearization of $\deebar$ at a zero $u$ defines a Cauchy-Riemann type operator
\[
\begin{array}{lrcl}
\mathbf{D}_u : & \Gamma(u^*TW) &\to & \Gamma(\overline{\mbox{Hom}}_\C(T\ds, u^*TW)) ,\\
& v & \mapsto & \Delta v + J \circ \Delta v \circ j + (\Delta_v J ) \circ du \circ j,
\end{array}
\]
where $\Delta$ is any symmetric connection on $W$.  As a bounded linear operator $T_u\mathcal{B} \to \cale_u$ is Fredholm and for dim$(W)=4$ we have that the Fredholm index of $\mathbf{D}_u$ is
\[
\ind(\mathbf{D}_u) = 2\chi(\ds) + 2c_1^\Phi(u^*TW) + \mu^\Phi(u) + \#\g.
\]

The above Teichm\"uller set up and accompanying theory \cite[\S 3.2]{Wtrans} allows one to vary complex structures on the domain so that one can appropriately make sense of the ``total linearization'' at $(j,u)$ of $\deebar^{-1}(0)$, expressed as $D\deebar(j,u)$.   Moreover  since $ \mathbf{D}_u$ is Fredholm and $\mathscr{T}$ is finite dimensional we can conclude that $D\deebar(j,u)$ is also Fredholm with Fredholm index given by
\[
\ind D \deebar (j, u) = \mbox{dim} \mathscr{T} + \ind ( \mathbf{D}_u) = \ind (u) + \mbox{dim Aut}(\ds, j). 
\]   
For  non-stable $\ds$ this immediately  yields (\ref{w-index}).  For the genus 0 stable domains of interest in this paper, e.g. $\Sigma = S^2$ and $\#\g \geq 3$, we have that $\mbox{dim}  \mathcal{M}(\ds) = \mbox{dim}  \mathscr{T}(\ds)$ and again (\ref{w-index}) immediately follows.

Any neighborhood of any non-constant $ u \in \calm(\ga, \ga_1,...,\ga_s)$ is in one-to-one correspondence with $\deebar^{-1}(0) / \mbox{Aut}(\ds,j)$, where the group  $\mbox{Aut}(\ds,j)$
of biholomorphic maps $(\Sigma, j) \to (\Sigma,j)$ fixing $\g$ acts on pairs $(j',u') \in \deebar^{-1}(0)$ by
\[
\varphi \cdot (j', u') = (\varphi^* j', u' \circ \varphi).
\]
Note that in the cases where $\ds$ is not stable and is $\C$ or $\R \times S^1$ then $\mathscr{T}$ contains only $j_0$ and $(j_0,u) \sim (j_0, u')$ if and only if $u' = u \circ \varphi$ for some $\varphi \in \mbox{Aut}(\ds, j_0).  $

\begin{definition}\em
One says that  $u \in \calm$ is \textbf{regular} whenever it represents a transverse intersection with the zero-section.  This is equivalent to requiring that the linearization
\[
D\deebar(u): T_u\mathcal{B} \to T_{(u,0)}\cale
\]
be surjective. 
\end{definition}

 If $u$ is a non-constant curve then the action of $\mbox{Aut}(\ds,j)$ induces a natural inclusion of its Lie algebra $\mathfrak{aut}(\dot{\Sigma},j)$ into $\ker D\deebar(u).$ The first theorem in \cite{Wtrans} is the following standard folk theorem, which we have restricted to curves limiting on nondegenerate orbits in symplectizations\footnote{This theorem holds for almost complex manifolds with noncompact cylindrical ends approaching codimension 1 manifolds $M_\pm$ equipped with stable Hamiltonian structures, and allows for Morse-Bott orbits.} of contact 3-manifolds with $\Phi$ chosen as in Remark \ref{trivchoice}.

\begin{theorem}[Theorem 0, \cite{Wtrans}]\label{folk0}
Assume that $u\colon (\dot{\Sigma},j) \to (\R \times M, \Jt)$ is a non-constant curve in $\calm(\ga; \ga_1,...,\ga_s)$ asymptotic to nondegenerate orbits.  If $u$ is regular, then a neighborhood of $u$ in $\calm(\ga, \ga_1,...,\ga_s)$ naturally admits the structure of a smooth orbifold of dimension 
\[
{\mbox{\em ind}}(u) =  -(1-s) + \czm(\gamma) - \displaystyle \sum_{i=1}^s\czm(\gamma_i),
\] 
whose isotropy group at $u$ is given by
\[
\mbox{\em Aut}:=\{ \varphi \in \mbox{\em Aut}(\dot{\Sigma}, j) \ | \ u = u \circ \varphi \}.
\] 
Moreover, there is a natural isomorphism
\[
T_u\calm(\ga; \ga_1,...,\ga_s) = \ker D\deebar(j,u)/\mathfrak{aut}(\dot{\Sigma},j).
\]
\end{theorem}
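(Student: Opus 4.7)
The plan is to apply the implicit function theorem to the section $\deebar:\mathcal{B}\to\mathcal{E}$ of the Banach bundle set up in the excerpt, and then pass to the quotient by the $\mbox{Aut}(\ds,j)$-action. By hypothesis $u$ is regular, i.e.\ $D\deebar(j,u)$ is surjective, and this operator is Fredholm of index $\ind(u)+\dim\mbox{Aut}(\ds,j)$. The implicit function theorem for Banach manifolds then yields a smooth finite-dimensional submanifold $\widetilde{\mathcal{N}}\subset\mathcal{B}$ through $(j,u)$ modelling $\deebar^{-1}(0)$ locally, of dimension $\ind(u)+\dim\mbox{Aut}(\ds,j)$, with tangent space $\ker D\deebar(j,u)$ at $(j,u)$. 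A neighborhood of $[u]$ in $\calm(\gamma;\gamma_1,\dots,\gamma_s)$ is then identified with $\widetilde{\mathcal{N}}/\mbox{Aut}(\ds,j)$ via $\varphi\cdot(j',u')=(\varphi^{*}j',u'\circ\varphi)$.

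Next I would identify the isotropy subgroup of $(j,u)$ for this action as exactly the group $\mbox{Aut}=\{\varphi\in\mbox{Aut}(\ds,j)\mid u=u\circ\varphi\}$ named in the theorem, and verify that it is finite. Equality with $\mbox{Aut}$ is tautological from the formula for the action. Finiteness is automatic for stable $\ds$, since $\mbox{Aut}(\ds,j)$ is then itself finite. For the unstable domains $\ds=\C$ and $\ds=\R\times S^1$, where $\mbox{Aut}(\ds,j)$ is positive-dimensional, finiteness is a consequence of asymptotic analysis: by Theorem \ref{reebasym} combined with Lemma \ref{lemmaasymptotics}, any $\varphi$ with $u=u\circ\varphi$ must preserve the $C^{\infty}$ asymptotic parametrization $(Ts,\gamma(Tt))$ at each puncture, and nondegeneracy of the limit orbits forces $\varphi$ to restrict near each end to a rotation through an angle commensurate with the covering multiplicity of the asymptotic Reeb orbit, leaving only a finite cyclic subgroup. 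To extract an orbifold chart around $[u]$, I would then choose a local slice $\mathcal{S}\subset\mathcal{B}$ through $(j,u)$ transverse to the $\mbox{Aut}(\ds,j)$-orbit, i.e.\ with $T_{(j,u)}\mathcal{S}$ complementary to the image of $\mathfrak{aut}(\ds,j)$ inside $T_{(j,u)}\mathcal{B}$ under the infinitesimal action. The intersection $\widetilde{\mathcal{N}}\cap\mathcal{S}$ is then a smooth submanifold of dimension $\ind(u)$ on which the finite stabilizer $\mbox{Aut}$ acts, and its quotient furnishes the asserted orbifold chart.

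For the tangent space identification, the inclusion $\mathfrak{aut}(\ds,j)\hookrightarrow \ker D\deebar(j,u)$ already noted in the excerpt comes from differentiating the $\mbox{Aut}(\ds,j)$-action at $(j,u)$. By transversality of the slice, $T_{(j,u)}\mathcal{S}\cap\ker D\deebar(j,u)$ is a linear complement to $\mathfrak{aut}(\ds,j)$ inside $\ker D\deebar(j,u)$, so
\[
T_{[u]}\calm(\gamma;\gamma_1,\dots,\gamma_s)\;\cong\;\ker D\deebar(j,u)/\mathfrak{aut}(\ds,j),
\]
as claimed. The main obstacle is arranging the Banach setup---in particular the weighted Sobolev completions on maps asymptotic to the given nondegenerate orbits---so that the $\mbox{Aut}(\ds,j)$-action is smooth and a slice through $(j,u)$ exists in the unstable cases $\ds=\C$ and $\ds=\R\times S^1$; once this and the asymptotic finiteness of the isotropy are in place, the orbifold structure and tangent-space isomorphism follow from the implicit function theorem together with the standard quotient construction.
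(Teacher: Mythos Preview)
The paper does not actually prove this theorem: it is stated as Theorem~0 of \cite{Wtrans} and cited as a ``standard folk theorem,'' with the surrounding discussion in Section~\ref{trans} supplying only the functional-analytic setup (the Teichm\"uller space $\mathscr{T}(\ds)$, the Cauchy--Riemann section $\deebar:\mathcal{B}\to\mathcal{E}$, the Fredholm index computation $\ind D\deebar(j,u)=\ind(u)+\dim\mbox{Aut}(\ds,j)$, and the identification of a neighborhood of $u$ with $\deebar^{-1}(0)/\mbox{Aut}(\ds,j)$). So there is no in-paper proof to compare against.

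That said, your sketch is the correct standard argument and matches what Wendl does in \cite{Wtrans}: implicit function theorem on the zero set of a Fredholm section with surjective linearization, followed by a slice construction for the $\mbox{Aut}(\ds,j)$-action. Your handling of the unstable cases is the right idea, though the finiteness of the isotropy can be argued slightly more directly: any $\varphi\in\mbox{Aut}$ is a holomorphic automorphism of $(\ds,j)$ covering the identity on the image of $u$, so on the complement of the finite set $\mbox{Crit}(u)$ it is determined by a deck transformation of a degree-$\mult(u)$ covering, giving $|\mbox{Aut}|\leq\mult(u)$ immediately. The one point you flag as a potential obstacle---smoothness of the reparametrization action and existence of a slice in the weighted Sobolev setup for unstable domains---is indeed where the technical work lies, and is handled in \cite[\S 3.2]{Wtrans} by working with the Teichm\"uller slice rather than directly with $\mathcal{J}(\Sigma)$.
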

\noindent In particular, if regularity can be achieved when $u$ is somewhere injective then $\calm$ is a manifold near $u$.  However, when $u$ is multiply covered the isotropy group for an orbifold singularity has order bounded by the covering number of $u$. 

Before we can state Wendl's criterion for automatic transversality, we must review a few numbers that encode certain topological and geometric data.  The subset $\g_0(u) \subset \g$ consists of the \textbf{punctures for which the asymptotic orbit has even Conley-Zehnder index}\footnote{This is in the case that the orbits are nondegenerate. If the orbits are Morse-Bott, the definition is more complicated; see \cite{Wtrans}.  }.   A related quantity is $\g_1(u):=\g \setminus \g_0(u)$, which consists of the \textbf{punctures for which the asymptotic orbit has odd Conley-Zehnder index}.

The \textbf{normal first Chern number}, $c_N(u;c)$, is a half integer, which when $\Sigma$ is closed and of genus 0 is given by
\begin{equation}\label{normalchern}
2c_N(u)={\mbox{ind}}(u) - 2  + \#\g_0(u).
\end{equation}

If $\Sigma$ is closed and there are no punctures then (\ref{w-index}) and (\ref{normalchern}) yields 
\[
c_N(u)=c_1(u^*TW) - \chi(\Sigma).
\]
Thus if $u$ is immersed then $c_N(u;c)$ is the first Chern number of the normal bundle.  

We also need to encode the \textbf{total order of critical points} of a curve $u: \ds \to W$, as follows.  Since a non-constant curve $u$ is necessarily immersed near the ends, it can have at most finitely many critical points.  The bundle 
\[
u^*TW  \to \ds
\]
admits a natural holomorphic structure such that the section
\[
du \in \g(\mbox{Hom}_{\C}(T\ds, u^*TW))
\]
is holomorphic.  Thus its critical points are isolated and have positive order, which we will denvote by $\mbox{ord}(du;z)$ for any $z \in \mbox{Crit}(u).$ This yields the desired quantity,
\begin{equation}\label{w-Z}
Z(du):= \sum_{z\in du^{-1}(0)\cap \mbox{\tiny int}(\ds)}\mbox{ord}(du;z), 
\end{equation}
an integer because $\Sigma$ is closed. 

\begin{remark}\em
Note that $Z(du)=0$ if and only if $u$ is immersed. 
\end{remark} 

In later applications we will often need $Z(du)=0$.  To better understand which somewhere injective curves are immersed, we state the following simple version of the folk theorem which states that generically, spaces of \curves \ with more than a minimum number of critical points have positive codimension.

\begin{proposition}[Corollary 3.17, \cite{Wtrans}]\label{Z}
For generic $\A$-compatible $\Jt$, all somewhere injective curves $u \in \calm$ satisfy
\[
2Z(du) \leq {\mbox{\em ind}}(u).
\]
\end{proposition}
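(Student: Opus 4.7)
The plan is to stratify the universal moduli space by the total critical-point order $Z(du)$ and conclude via the Sard--Smale theorem. Recall from Theorem 1.10 of \cite{HWZ3} that the universal moduli space $\ncal(\gamma;\gamma_1,\ldots,\gamma_s)$ of pairs $(u,\Jt)$ admits a smooth Fredholm projection $\pi\colon \ncal \to U_\Delta$ of local index $\ind(u)$.

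For each positive integer $k$, let $\ncal_k \subset \ncal$ denote the locus of $(u,\Jt)$ with $Z(du) \geq k$. The key claim is that $\ncal_k$ has real codimension at least $2k$ in $\ncal$. Granting this, the restricted projection $\pi|_{\ncal_k}$ is Fredholm of index at most $\ind(u) - 2k$, and applying Sard--Smale produces a residual subset $\mathcal{S}_k \subset U_\Delta$ of regular values on which the fiber has dimension at most $\ind(u) - 2k$. For this fiber to be nonempty one needs $\ind(u) \geq 2k$, forcing $2Z(du) \leq \ind(u)$ for any somewhere injective $u$ arising over $\Jt \in \mathcal{S}_k$. Intersecting the countable family $\{\mathcal{S}_k\}_{k \geq 1}$ gives the desired residual $\mathcal{S} \subset U_\Delta$.

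To establish the codimension claim, I would stratify $\ncal_k$ further by the combinatorial type of the critical divisor, i.e.\ by a partition $\mathbf{m} = (m_1, \ldots, m_\ell)$ of $k$ into positive integers. The condition that $du$ vanish to order exactly $m_i$ at a point $z_i \in \ds$ is a complex-analytic condition of real codimension $4m_i$ on the section $du \in \g(\overline{\mbox{Hom}}_\C(T\ds, u^*TW))$ after fixing $z_i$; allowing the $\ell$ positions to vary freely on $\ds$ relaxes this by $2\ell$, giving total real codimension $4k - 2\ell \geq 2k$ for the stratum $\ncal_{\mathbf{m}}$, with the minimum $2k$ attained when all critical points are simple.

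The main obstacle is promoting this heuristic codimension count to an honest Banach submanifold structure. The analytic input is the surjectivity of the jet-evaluation map sending $((z_1,\ldots,z_\ell), u, \Jt)$ to the $(m_i-1)$-jets of $du$ at each $z_i$, where variations in $u$ are controlled by the linearization $\mathbf{D}_u$ and variations in $\Jt$ are allowed via the ambient Banach manifold structure on $U_\Delta$. The somewhere-injective hypothesis is essential: an injective point $z^* \in \ds \setminus \{z_1,\ldots,z_\ell\}$ permits perturbations of $\Jt$ supported in a small neighborhood of $u(z^*)$ that do not coincide with any other part of the image of $u$, and a unique-continuation argument applied to the formal adjoint of $\mathbf{D}_u$, as in \cite{FHS} and its punctured analogue in \cite{HWZ3, Wtrans}, ensures that these perturbations yield independent variations of the jets of $du$ at the $z_i$.
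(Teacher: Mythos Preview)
The paper does not supply its own proof of this proposition: it is quoted verbatim as Corollary~3.17 of \cite{Wtrans} and used as a black box, with the sentence immediately following it (``Hence generically if $u$ is somewhere injective with $\ind(u)\leq 1$ then $u$ is immersed'') being the only commentary. There is therefore nothing in the present paper to compare your argument against.

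That said, your outline is the standard route and is in the spirit of what Wendl does: stratify the universal moduli space by the combinatorial type of the critical divisor, compute the codimension of each stratum via a jet-evaluation map, use somewhere injectivity plus unique continuation to obtain universal transversality of that evaluation, and then Sard--Smale. Your codimension count $4k-2\ell\geq 2k$ is correct in the $4$-dimensional setting, since $du$ is a holomorphic section of a complex rank-$2$ bundle over a complex $1$-dimensional domain, so vanishing to order $m$ at a fixed point imposes $4m$ real conditions and the point itself moves in $2$ real dimensions. The one place to be careful is the promotion of the heuristic to an honest submanifold: the strata $\ncal_{\mathbf m}$ are not a priori smooth, and one must check that each is contained in a countable union of Banach submanifolds of the stated codimension (or work with the closed strata $Z(du)\geq k$ directly), so that Sard--Smale applies. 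You flag this as ``the main obstacle'' and sketch the right fix, so the proposal is sound as an outline; for a complete proof one would defer to \cite{Wtrans} or the analogous arguments in \cite{IS, HLS}.
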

Hence generically if $u$ is somewhere injective with $\ind(u)\leq1$ then $u$ is immersed.  
The following description of a multiply covered curve is relevant to the statement of the following lemmas.  Let $u: (\dot{\Sigma}, j) \to (W,J)$ be a multiple cover of a simple curve $v:(\ds', j') \to (W,J)$.  Then by Theorem \ref{multiplycovered} there exists a holomorphic branched covering $\varphi: (\Sigma, j) \to (\Sigma', j')$ with $\g' = \varphi(\g)$, $deg(\varphi)\geq1$, and a simple \curve \ such that
\[
u = v \circ \varphi. 
\]

\begin{definition}\label{unbanched} \em
We say that $u$ is an \textbf{unbranched} cover of an asymptotically cylindrical pseudoholomorphic curve whenever $u$ can be expressed as 
\[
u = v \circ \varphi. 
\]
where $v$ is a simple asymptotically cylindrical curve and $\varphi: (\Sigma, j) \to (\Sigma', j')$ is a holomorphic covering with no branch points.  
\end{definition}

\begin{lemma}\label{remZ}
For generic $\A$-compatible $J$, if $u$ is a multiple cover of an immersed finite energy cylinder and $\ind(u) \leq 2$ then 
\[
Z(du) = Z(d\varphi).
\]
\end{lemma}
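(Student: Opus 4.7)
The plan is to apply the chain rule $du(z) = dv(\varphi(z)) \circ d\varphi(z)$ to the factorization $u = v \circ \varphi$ from Theorem \ref{multiplycovered}, and exploit the immersion hypothesis on $v$ to identify $\mbox{Crit}(u)$ with $\mbox{Crit}(\varphi)$ along with matching vanishing orders.

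First I would observe that since $v$ is immersed, $dv(w)$ is a complex-linear injection for every $w \in \ds'$. By the chain rule, $du(z) = 0$ if and only if $d\varphi(z) = 0$, so $\mbox{Crit}(u) = \mbox{Crit}(\varphi)$. In particular, if $\varphi$ is unbranched then both sides of the desired equality are already zero and we are done.

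Next, for each $z \in \mbox{Crit}(\varphi)$ I would compare vanishing orders in holomorphic coordinates. Choose a chart on $(\Sigma, j)$ centered at $z$ and one on $(\Sigma',j')$ centered at $\varphi(z)$ in which $\varphi$ has the normal form $\zeta \mapsto \zeta^k$ with $\mbox{ord}(d\varphi; z) = k-1$. Pick a holomorphic trivialization of the complex vector bundle $v^*T(\R \times M)$ near $\varphi(z)$; in this trivialization $dv$ is represented by a holomorphic section of $\mbox{Hom}_\C(T\ds', v^*T(\R \times M))$ whose value at $\varphi(z)$ is nonzero because $v$ is immersed there. Writing $du$ in these coordinates, the chain rule expresses it (up to a nonvanishing factor) as $dv(\zeta^k) \cdot k\zeta^{k-1}$, which vanishes to order exactly $k-1$ at $\zeta = 0$. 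Hence $\mbox{ord}(du; z) = \mbox{ord}(d\varphi; z)$.

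Summing over the finitely many critical points then yields
\[
Z(du) = \sum_{z \in \mbox{Crit}(u)} \mbox{ord}(du; z) = \sum_{z \in \mbox{Crit}(\varphi)} \mbox{ord}(d\varphi; z) = Z(d\varphi).
\]
The genericity of $J$ and the hypothesis $\ind(u) \leq 2$ enter only through the surrounding context: they ensure, via Theorem \ref{HT2thm} and Proposition \ref{Z}, that the simple curve underlying $u$ is indeed immersed, which is the one analytic input being used. The only delicate step is the local vanishing-order computation at a branch point, and since $dv$ is nonvanishing at $\varphi(z)$ this reduces entirely to the local normal form of $\varphi$, so no additional analysis is needed.
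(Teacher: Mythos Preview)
Your proof is correct and follows essentially the same approach as the paper: factor $u = v \circ \varphi$, use that $dv$ is nowhere vanishing since $v$ is immersed, and apply the chain rule to identify the critical points and their orders. The paper's proof is terser, simply asserting that ``it follows from the chain rule that the critical points of $u$ can only arise from branch points of the cover $\varphi$,'' while you supply the explicit local normal-form computation verifying that the vanishing orders agree; this extra detail is harmless and arguably clarifying. You are also right that the hypotheses on generic $J$ and $\ind(u)\leq 2$ are not actually used in the argument itself (the immersion of $v$ is already assumed), and indeed the paper's proof does not invoke them either.
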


\begin{proof}
We can write any $u$ as the composition $ u=v \circ \varphi,$ where $v$ is an somewhere injective cylinder and $\varphi$ is a holomorphic covering of the source of $u$.  Since $v$ is immersed we have that $Z(dv)=0$. Then it follows from the chain rule that the critical points of $u$ can only arise from branch points of the cover $\varphi$, hence $Z(du)=Z(d\varphi)$.

\end{proof}

Putting this together we obtain the following lemma.

\begin{lemma}\label{Z2}
For generic $\A$-compatible $\Jt$, any unbranched cylinder $u$ of index $\leq 2$ in a symplectization satisfies 
\[
Z(du) =0.
\]
\end{lemma}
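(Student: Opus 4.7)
The plan is to reduce the claim to showing that the underlying simple cylinder is immersed, and then invoke Theorem \ref{HT2thm}. First I would apply Theorem \ref{multiplycovered} to write $u = v \circ \varphi$, where $v\colon (\R\times S^1,j_0)\to(\R\times M,\Jt)$ is a somewhere injective finite energy cylinder and $\varphi\colon(\R\times S^1,j_0)\to(\R\times S^1,j_0)$ is a holomorphic covering of some degree $k\geq 1$ (with $k=1$ corresponding to $u$ itself being simple). Unbranchedness of $u$ means $\varphi$ has no critical points, so $d\varphi$ is a complex-linear isomorphism at every point and $Z(d\varphi)=0$.

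Next, the chain rule $du(z) = dv(\varphi(z))\circ d\varphi(z)$ together with the fact that $d\varphi$ is pointwise invertible shows that $du(z)=0$ iff $dv(\varphi(z))=0$, and in local holomorphic charts in which $\varphi$ is a biholomorphism one sees that the order of a critical point of $u$ equals that of its image under $\varphi$. Summing over the $k$ sheets gives $Z(du) = k \cdot Z(dv)$, so the lemma is equivalent to showing that $v$ is immersed.

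For this I would appeal to Theorem \ref{HT2thm}, which states that for generic $\Jt$ every somewhere injective asymptotically cylindrical curve of index $\leq 2$ is immersed. It therefore suffices to bound $\ind(v)\leq 2$. Since $\varphi$ has degree $k$, the asymptotic orbits of $u$ are the $k$-fold iterates $\gp_v^{\,k}$ and $\gm_v^{\,k}$ of those of $v$, so with the trivialization choice of Remark \ref{trivchoice} we have $\ind(u)=\czm(\gp_v^{\,k})-\czm(\gm_v^{\,k})$ and $\ind(v)=\czm(\gp_v)-\czm(\gm_v)$. Conditions (A)--(B) of Theorem \ref{conditions} (which hold for generic $\Jt$ in the dynamically convex setting governing Section \ref{regularity}) give $\ind(v)\geq 0$, with $\ind(v)=0$ only when $v$ is a trivial cylinder, in which case $u$ is tautologically immersed. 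In the nontrivial case, combining $\ind(u)\leq 2$ with the iteration inequalities on the Conley--Zehnder index under dynamical convexity yields $\ind(v)\leq\ind(u)\leq 2$, so Theorem \ref{HT2thm} applies and gives $Z(dv)=0$.

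The main obstacle is the index comparison $\ind(v)\leq\ind(u)$ for unbranched covers of cylinders: this is not a purely topological statement but uses the Conley--Zehnder index iteration formulas that form the content of Section \ref{numerology}. Once that bound is in hand, the proof concludes cleanly: $Z(dv)=0$ via Theorem \ref{HT2thm}, and then $Z(du)=k\cdot Z(dv)=0$ as required. Equivalently, this last step can be packaged by invoking Lemma \ref{remZ} directly, since $v$ being immersed and $\ind(u)\leq 2$ are exactly its hypotheses, yielding $Z(du)=Z(d\varphi)=0$.
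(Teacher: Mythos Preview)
Your overall strategy matches the paper's: factor $u=v\circ\varphi$, reduce to showing $\ind(v)\leq 2$, then invoke Theorem~\ref{HT2thm} and Lemma~\ref{remZ}. However, there are two genuine gaps in how you justify the key step $\ind(v)\leq 2$.

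First, your appeal to Conditions~(A)--(B) of Theorem~\ref{conditions} is circular. That theorem is \emph{proven} using Lemma~\ref{Z2} (see the Corollary immediately following Proposition~\ref{genericJgood}), so you cannot invoke it here. In fact you do not need it at all: the trivial-cylinder case is immediate since trivial cylinders are embeddings, and the lower bound $\ind(v)\geq 1$ plays no role in bounding $\ind(v)$ from above.

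Second, the assertion ``the iteration inequalities on the Conley--Zehnder index under dynamical convexity yield $\ind(v)\leq\ind(u)$'' is both vague and misattributed. Dynamical convexity is irrelevant here; the lemma holds for any nondegenerate contact $3$-manifold. What the paper actually does is argue by contradiction using only Proposition~\ref{almostlinear}: if $\ind(v)=\czm(\gp)-\czm(\gm)>2$, then
\[
\ind(u)=\czm(\gp^k)-\czm(\gm^k)\geq k\bigl(\czm(\gp)-\czm(\gm)\bigr)-2k+2>2,
\]
contradicting $\ind(u)\leq 2$. This is the missing computation; once you supply it in place of the circular reference and the unproven inequality, your argument becomes the paper's proof.
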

\begin{proof}
If $u$ is somewhere injective this follows from Theorem \ref{HT2thm}.   If $u$ is not somewhere injective, then $u$ is the unbranched multiple cover of an immersed cylinder $v$.  After we show that $\ind(u) \leq 2$ forces $\ind(v) \leq 2$ we obtain the desired result.  This is because any unbranched cover of a cylinder has no branch points, hence by Lemma \ref{remZ}, $Z(du)=Z(d\varphi)=0.$

We proceed with a proof by contradiction that $\ind(u) \leq 2$ forces $\ind(v) \leq 2$. Let $u$ be a $k$-fold unbranched covering of $v \in \calm(\gp;\gm)$.   Proposition \ref{almostlinear} states that for any closed Reeb orbit $\gamma$ of a nondegenerate contact 3-manifold the following inequality holds involving $\ga^k$, the $k$-fold cover of $\gamma: $
 \begin{equation}\label{czlove1}
 k\czm(\ga) - k+1 \leq \czm(\ga^k) \leq k \czm(\ga)+k-1.
 \end{equation}

Assume that
\[
\ind(v) = \czm(\gp)-\czm(\gm) > 2.
\]
In combination with (\ref{czlove1}) we compute
\[
\begin{array}{lcl}
\ind(u) &=& \czm(\gp^k)-\czm(\gm^k) \\
&\geq & k \left( \czm(\gp) - \czm(\gm) \right) - 2k + 2\\
&>& 2,\\
\end{array}
\]
a contradiction.  Thus $\ind(u)\leq 2$ forces $\ind(v)\leq 2$, hence $v$ is immersed.

\end{proof}

Lastly we define for given constants $r \in \R$ and $G \geq 0$ the nonnegative integer,
\begin{equation}
K(r,G)= \mbox{min} \{ k + \ell \ | \ k \in \Z_{\geq 0},  \ \ell \in 2\Z_{\geq 0}, \ k \leq G, \mbox{ and } 2k + \ell > 2r \}.
\end{equation}
In most of our applications, it turns out that $r<0$, so $K(r,G)=0$. With everything in place we can state the main automatic transversality result.  In the simple case of a symplectization equipped with a $\A$-compatible almost complex structure, full details of this proof are provided in Section 4.1 of \cite{HNdyn}.  This simpler automatic transversality result is the one used throughout the paper.

\begin{theorem}[Theorem 1 \cite{Wtrans}]\label{Wtrans1}
Suppose that $\mbox{\em dim }W = 4$ and $u \in \calm(\ga; \ga_1,...\ga_s)$ is a non-constant curve asymptotic to nondegenerate orbits.  If
\begin{equation}\label{auttranseq}
{\mbox{\em ind}}(u) > c_N(u)  + Z(du),
\end{equation}
then $u$ is regular.  Moreover, when this condition is not satisfied, we have the following bounds on the dimension of $\ker D\deebar(j,u)$.  If ${\mbox{\em ind}}(u) \leq 2Z(du)$, then
\[
\begin{array}{lcl}
2Z(du) &\leq& \dim ( \ker D\deebar (j,u))/\mathfrak{aut}(\dot{\Sigma},j) \\
& \leq & 2Z(du) + K\left(c_N(u) - Z(du), \# \g_0(u)\right), \\
\end{array}
\]
and if $2Z(du) \leq {\mbox{\em ind}}(u)$, then
\[
\begin{array}{lcl}
 {\mbox{\em ind}}(u)& \leq & \dim (\ker D\deebar(j,u))/\mathfrak{aut}(\dot{\Sigma},j)\\
 &\leq&   {\mbox{\em ind}}(u) +  K\left(c_N(u) +Z(du) - {\mbox{\em ind}}(u), \# \g_0(u)\right).\\
\end{array}
\]
\end{theorem}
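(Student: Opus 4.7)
The strategy is a reduction to a zero-counting argument for a Cauchy-Riemann operator on a complex line bundle. Since $\dim W = 4$ and $u$ is non-constant, $u^*TW$ fits into a short exact sequence
\[
0 \to T\ds \otimes \mathcal{O}(-\mathrm{Crit}(u)) \to u^*TW \to N_u \to 0
\]
with $N_u$ a complex line bundle, the twist by the divisor $\mathrm{Crit}(u)$ (of total order $Z(du)$) handling the finitely many critical points of $u$. The linearization $\mathbf{D}_u$ decomposes accordingly: its restriction to the tangential factor is absorbed into the Teichm\"uller directions together with the infinitesimal automorphisms $\mathfrak{aut}(\ds,j)$, and the quotient descends to a \emph{normal Cauchy-Riemann operator}
\[
D_u^N : W^{1,p,\delta}(N_u) \to L^{p,\delta}(\overline{\mathrm{Hom}}_\C(T\ds, N_u)).
\]
Surjectivity of $D\deebar(j,u)$ is equivalent to surjectivity of $D_u^N$, and Riemann-Roch for punctured surfaces computes $\ind D_u^N$ in terms of $c_N(u)$, $\#\g_0(u)$ and $\ind(u)$.

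Next I would invoke two classical analytic tools to bound $\ker D_u^N$. The similarity principle of Hartman-Wintner and Micallef-White ensures that every nontrivial $\eta \in \ker D_u^N$ has isolated interior zeros of positive order, and the Hofer-Wysocki-Zehnder asymptotic analysis attaches to $\eta$ an asymptotic winding number $\mathrm{wind}^\infty(\eta;z)$ at each puncture $z \in \g$, determined by the eigenspace decomposition of the asymptotic operator. Expressing $c_1^\Phi(N_u)$ as the algebraic sum of these interior and asymptotic contributions yields the central inequality
\[
Z(\eta) \;\leq\; c_N(u) + Z(du) \;-\; (\text{asymptotic defect}),
\]
where the asymptotic defect is controlled by the extremal winding numbers permitted at each puncture relative to $\Phi$.

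Finally, I would convert this bound into a dimension estimate for $\ker D_u^N$. At a puncture in $\g_1(u)$ the eigenvalues of the asymptotic operator force winding numbers to jump in steps of $2$, while at a puncture in $\g_0(u)$ jumps of $1$ are permitted; this asymmetry is exactly what is packaged by $K(r, \#\g_0(u))$. An algebraic manipulation—choosing $\eta \in \ker D_u^N$ vanishing to prescribed order at an interior point or with prescribed asymptotic behaviour, and comparing with the zero-count ceiling—produces the stated upper bound on $\dim \ker D_u^N$. Since $\dim \mathrm{coker}\, D_u^N = \dim \ker D_u^N - \ind D_u^N$, the hypothesis $\ind(u) > c_N(u) + Z(du)$ forces the cokernel to vanish, giving regularity; in the non-regular case the same accounting directly produces the two stated upper bounds on $\dim(\ker D\deebar(j,u)/\mathfrak{aut}(\ds,j))$, with the $2Z(du)$ baseline coming from automatic non-regularity at branch points of the cover.

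The main obstacle is executing this winding-number bookkeeping faithfully at the punctures: one must coordinate the HWZ relative asymptotic formulas with the trivialization $\Phi$ and carefully track the parity constraints separating $\g_0(u)$ from $\g_1(u)$, so that the zero-counting inequality sharpens to exactly $c_N(u) + Z(du)$ and the correction by $K(r, \#\g_0(u))$ emerges in the precise form stated. The hypothesis $\dim W = 4$ is used essentially here, since only in that case is $N_u$ a complex line bundle to which the one-dimensional similarity-principle methods apply; in higher dimensions the argument fails because zeros of sections of higher-rank bundles are no longer forced to be isolated.
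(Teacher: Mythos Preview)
The paper does not prove this theorem; it is quoted verbatim from \cite{Wtrans} (Wendl's automatic transversality paper), and the surrounding text only refers the reader to \cite{Wtrans} and to Section~4.1 of \cite{HNdyn} for the simpler symplectization case. So there is no ``paper's own proof'' to compare against.

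That said, your sketch is an accurate outline of Wendl's argument: the splitting of $u^*TW$ into tangent and normal line bundles (twisted by the critical divisor), the reduction of regularity of $D\deebar(j,u)$ to surjectivity of the normal operator $D_u^N$, and the zero-counting via the similarity principle together with the HWZ asymptotic winding numbers are exactly the ingredients Wendl uses. The parity bookkeeping distinguishing $\g_0(u)$ from $\g_1(u)$ and the role of $K(r,\#\g_0(u))$ are also correctly identified. The one place where your sketch is vague is the precise mechanism by which the zero-count ceiling on elements of $\ker D_u^N$ converts into the stated dimension bounds; in Wendl's argument this goes through an explicit linear-algebra lemma (evaluating kernel elements on jets at a point and at asymptotic eigenspaces), and filling that in is where the actual work lies. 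But as a strategic summary of the proof in \cite{Wtrans}, your proposal is on target.
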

\begin{remark}\label{indexremark}
\em
If we plug in the first Chern number of the normal bundle, $c_N(u)$, into the index formula, then condition (\ref{auttranseq}) is equivalent to
\begin{equation}\label{transcyl}
{\mbox{ind}}(u) > 2g + \# \g_0(u)  -2 + 2Z(du),
\end{equation}
or
\[
 2c_1^\Phi(u^*TW) + \mu^\Phi(u) + \# \g_1(u) > 2Z(du),
\]
where $\g_1(u):=\g \setminus \g_0(u)$ consists of the punctures for which the asymptotic orbit has odd Conley-Zehnder index.  
\end{remark}

\begin{remark}
\em
There is an important special case of the dimension bound which we will use in the applications.  Namely, if $c_N(u) < Z(du)$, then 
\[
K\left(c_N(u) - Z(du), \#\g_0(u)\right)=0,
\]
 and $\dim \ker (D\deebar(u))$ becomes $2Z(du)$, which is its smallest possible size.
\end{remark}

\section{Requisite regularity results}\label{regularity}
In the first half of this section we prove regularity results for unbranched multiply covered nontrivial cylinders of index $\leq 2$ in a symplectization of a contact 3-manifold provided the $\A$-compatible almost complex structure has been chosen generically.  This is proven by making use of index calculations and results of \cite{HWZ3, HT2, Wtrans}.

The second half of this section is devoted to demonstrating that moduli spaces of (branched) multiply covered curves, which would obstruct $\pa_\pm^2=0$, are empty in the symplectization of a dynamically separated contact form.   These results are obtained via index calculations, allowing us to complete the proof of Theorem \ref{conditions}.  Moreover, we show that the only obstruction to defining a cylindrical contact chain complex for the class of dynamically convex contact forms on contact 3-manifolds is due to the presence of a specific type of branched cover of a trivial cylinder.  The problematic branch covers are precisely stated in Lemma \ref{nontrivialtentacles}.  

Together these results comprise Theorems \ref{conditions} and \ref{conditions2}, Conditions (A)-(B) and (D).

 \subsection{Index calculations for arbitrary nondegenerate contact 3-manifolds}\label{arbitrary}
 In this section we provide  Conley-Zehnder index calculations for arbitrary nondegenerate contact 3-manifolds.  We briefly recall the classification of  nondegenerate Reeb orbits in dimension 3, as it is relevant to the behavior of the Conley-Zehnder index.  We note that many of the results we obtain in this subsection were proven using alternate methods in Section 2.3 of \cite{Mo}.
 
  Let $(M, \A)$ be a nondegenerate contact 3-manifold with $\xi:=\ker \A$ and $\{\varphi_t\}_{t \in \R}$ be the one-parameter group of diffeomorphisms of $M$ given by the flow of the Reeb vector field $R_\A$. Let $\gamma: \R/T\Z \to M$ be a $T$-periodic Reeb orbit, e.g. a solution to $\gamma'(t) = R_\A(\gamma(t))$ modulo reparametrization. The linearized flow
\[
d\varphi_t:T_{\ga(0)} M \to T_{\ga(t)}M,
\]
induces the symplectic linear map
\begin{equation}\label{slm}
\phi_t : \xi_{\ga(0)} \to \xi_{\ga(t)}.
\end{equation}
Using a trivialization of $\xi$ we can regard (\ref{slm}) as a $2 \times 2$ symplectic linear matrix. By construction, $\phi_0 = \id$, and since $\A$ is nondegenerate, $\phi_T \neq \id$. We call $\phi_T$ the \textbf{linearized return map} along $\gamma$ restricted to $\xi$.   A nondegenerate Reeb orbit $\gamma$ is said to be one of three types, depending on its \textbf{Floquet multipliers}, which are defined to be the eigenvalues $\lambda$, $\lambda^{-1}$ of the linearized return map $\phi_T$: 
\begin{itemize}
\item[] $\gamma$ is elliptic if $\lambda, \lambda^{-1} := e^{\pm2\pi i \theta}$;
\item[] $\gamma$ is positive hyperbolic if  $\lambda, \lambda^{-1} > 0$; 
\item[] $\gamma$ is negative hyperbolic if $\lambda, \lambda^{-1} < 0$.
\end{itemize}

Let $\ga$ be a simple closed Reeb orbit and denote by $\ga^k$ its $k$-th iterate; note that the linearized return map associated to $\ga^k$ is $\phi_{kT}$.  The following proposition gives the properties of the Conley-Zehnder index associated to nondegenerate Reeb orbits.  These formulae are proven in Section 8.1 of \cite{Ylong} for paths in $\mbox{Sp}(2)$; see also the discussion in Section 3.2 of \cite{Hu2}.  
  
     
\begin{proposition}\label{CZs}
Let $(M,\A)$ be a nondegenerate contact 3-manifold.  For any trivialization $\Phi$ the following formula for the Conley Zehnder index holds
\[
\mu_{CZ}^\Phi = \lfloor \theta \rfloor + \lceil \theta \rceil,  
\]
where $\theta$ denotes the \textbf{rotation number} of $\gamma$ with respect to $\Phi$.  
\begin{itemize}
\item[] If $\gamma$ is elliptic then $\theta$ is an irrational number.
\item[] If $\gamma$ is hyperbolic then $\theta$ is the number of times  that the eigenspaces of the linearized return map rotate with respect to $\Phi$ as one goes around $\gamma$.  
\begin{itemize} 
\item If $\gamma$ is positive hyperbolic $\theta \in \Z$.
\item If $\gamma$ is negative hyperbolic $\theta + \dfrac{1}{2} \in \Z$.  
\end{itemize}
\end{itemize}
Changing the trivialization $\Phi$ will shift the rotation number $\theta$ by an integer.  If $k$ is a positive integer and if $\gamma^m$ denotes the Reeb orbit that is the $k$-fold cover of $\gamma$ then 
\[
\mu_{CZ}^\Phi (\gamma^k) =  \lfloor k \theta \rfloor + \lceil k \theta \rceil .
\]
\end{proposition}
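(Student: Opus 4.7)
The plan is to reduce to a computation with paths of symplectic $2\times 2$ matrices and then invoke the axioms of the Conley-Zehnder index. Picking the trivialization $\Phi$, I identify $\xi|_\gamma$ with $(\R^2,\omega_0)$ and obtain from (\ref{slm}) a path $\Psi\colon [0,T]\to \Sp(2,\R)$ with $\Psi(0)=\id$ and $\Psi(T)$ not having $1$ as an eigenvalue. By definition $\czm^\Phi(\gamma)=\czm(\Psi)$, and by the Robbin-Salamon axioms \cite{RS1, SZ} this quantity is invariant under homotopies of $\Psi$ rel endpoints through paths whose final matrix stays in the same connected component of $\Sp(2,\R)\setminus\{A:\det(A-\id)=0\}$.

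The first main step is a normal-form reduction in each of the three Floquet cases, following Long's classification of paths in $\Sp(2,\R)$ from \cite[\S 8.1]{Ylong}. In the elliptic case $\Psi$ is homotopic rel endpoints to a rotation path $t\mapsto R_{2\pi\theta(t)}$ where $\theta(t)$ interpolates continuously from $0$ to $\theta$, and the rotation number $\theta$ is by definition this endpoint value. Since nondegeneracy of the contact form forces every iterate $\gamma^k$ to be nondegenerate, $k\theta\notin\Z$ for each $k\in\Z_{>0}$, so $\theta$ must be irrational. In the hyperbolic cases one writes $\Psi(t)=R_{2\pi\theta(t)}\cdot H(t)$ with $H(t)$ lying in one of the two hyperbolic strata; the rotation number $\theta$ literally records how the eigenlines of $\Psi(t)$ rotate with respect to $\Phi$. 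The requirement that $H(T)$ have positive (resp.\ negative) real eigenvalues forces the configuration of eigenlines at $t=T$ to agree with (resp.\ be the half-turn of) that at $t=0$, hence $\theta\in\Z$ (resp.\ $\theta+\tfrac{1}{2}\in\Z$).

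The next step is to compute $\czm(\Psi)$ directly on each model via signed crossings with the Maslov cycle. For the elliptic rotation path one finds $\czm(R_{2\pi\theta(\cdot)})=2\lfloor\theta\rfloor+1$, which coincides with $\lfloor\theta\rfloor+\lceil\theta\rceil$ when $\theta\notin\Z$. For either hyperbolic model the contribution of the rotation factor gives $2\theta$, matching $\lfloor\theta\rfloor+\lceil\theta\rceil$ in both the integer and half-integer cases. The trivialization-independence statement is then immediate: replacing $\Phi$ by $\Phi'$ multiplies $\Psi(t)$ by a loop of symplectic matrices of some integer winding number $n$, which shifts $\theta$ by $n$ and $\czm$ by $2n$. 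Finally, the iteration formula follows from the normal form, as the linearized flow along $\gamma^k$ is the $k$-fold concatenation of $\Psi$: in the elliptic case this is the rotation path ending at $R_{2\pi k\theta}$, giving $\lfloor k\theta\rfloor+\lceil k\theta\rceil$, and the hyperbolic cases are analogous.

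The main obstacle is the normal-form reduction itself, that is, exhibiting an honest homotopy from the original $\Psi$ to each model that stays within the appropriate nondegeneracy stratum. The delicate point is that the homotopy must track the specific rotation number $\theta$ (not just its value modulo $\Z$) and never cross the Maslov cycle in a way that alters the signed count of crossings; this is precisely what Long's normal-form analysis in \cite[\S 8.1]{Ylong} supplies. Once that reduction is in hand, the remaining computations on each model are elementary.
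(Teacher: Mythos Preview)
Your proposal is correct and follows the same approach as the paper: both defer the core normal-form reduction to Long's analysis in \cite[\S 8.1]{Ylong}. In fact the paper does not supply an independent proof of this proposition at all, simply citing \cite{Ylong} and the exposition in \cite[\S 3.2]{Hu2}; your sketch fills in the standard details surrounding that citation.
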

Before proceeding with index calculations, we make a few remarks about trvializations  in connection with the Conely-Zehnder index iteration formulas.  


\begin{remark}\label{trivtpy}\em
 The choice of trivialization in the above proposition is implicitly one such that the trivialization over an iterated orbit is homotopic to the `iterated' trivialization.  Namely, the $d\A$-symplectic trivialization $\Phi$ of $\xi := \ker \A$ along $\gamma^k$,  which was used to define $ \mu_{CZ}^\Phi(\gamma^k)$ must be homotopic to the ``$k$-iterated'' trivialization of $\xi$ along $\gamma$, which was used to compute $ \mu_{CZ}^\Phi(\gamma)$.   This is indeed always the case when $c_1(\xi;\Z)=0$, otherwise some care must be taken to obtain a trivialization fixed up to homotopy. 
\end{remark}

After fixing trivializations up to homotopy as in the above Remark, we will want to ``normalize'' them in later Fredholm index calculations as follows.

\begin{remark}\label{vanishingtriv}\em
In our index calculations we will choose a trivialization $\Phi$ so that $c_1^\Phi(v^*T(\R \times M))=0$ for a somewhere injective curve genus 0 asymptotically cylindrical curve $v$ with one positive puncture and at least one negative puncture.  This implies for any (branched) cover  $u := \varphi \circ v$,  that $c_1^\Phi(u^*T(\R \times M))=0$. We fix such a trivialization $\Phi$ and write $\czm$ as a shorthand for $\mu_{CZ}^\Phi$.  
\end{remark}
  
Without loss of generality, after fixing the trivialization up to homotopy as in Remark \ref{trivtpy} and selecting one such that Remark \ref{vanishingtriv} we can write the iteration formulas for the Conley-Zehnder index as follows.

 \begin{list}{\labelitemi}{\leftmargin=2em }
\item[\textbf{Elliptic case:}]
Take $\Phi$ to be a trivialization such that each $\phi_t$ is rotation by the angle $2 \pi \vartheta_t \in \R$ where $\vartheta_t$ is a continuous function of $t \in [0, T]$ satisfying $\vartheta_0=0$ and $\vartheta:=\vartheta_T \in \R \setminus \Z $.  The number $\vartheta \in \R \setminus \Z $ the \textbf{rotation angle} of $\gamma$ with respect to the trivialization and
\[
\mu_{CZ}(\ga^k) = 2 \lfloor k \vartheta \rfloor + 1. 
\]
 \item[ \textbf{Hyperbolic case:}] 
 Let $v \in \R^2$ be an eigenvector of $\phi_T$. Then for any trivialization used, the family of vectors $\{ \phi_t(v) \}_{t \in [0,T]}$, rotates through angle $\pi r$ for some integer $r$.  The integer $r$ is dependent on the choice of trivialization $\Phi$, but is always even in the positive hyperbolic case and odd in the negative hyperbolic case.  We obtain
 \[
 \mu_{CZ}(\ga^k) = k r.
 \]
 \end{list}

 The following proposition shows that in dimension 3, the Conley-Zehnder index grows almost linearly and will be used in Section \ref{numerology}.  It follows immediately by considering the above Conley-Zehnder index formulas
 in Proposition \ref{CZs}.  

 \begin{proposition}\label{almostlinear}
 Let $(M, \A)$ be a nondegenerate contact 3-manifold.  Let $\ga$ be any closed Reeb orbit of $R$ and $\ga^k$ its $k$-fold cover. Then
 \begin{equation}\label{czlove}
 k\czm(\ga) - k+1 \leq \czm(\ga^k) \leq k \czm(\ga)+k-1.
 \end{equation}
\end{proposition}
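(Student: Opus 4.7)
The plan is to treat the elliptic and hyperbolic cases separately using the explicit iteration formulas recorded immediately before the statement (and coming from Proposition \ref{CZs}). In both cases the trivialization is fixed once and for all as in Remarks \ref{trivtpy} and \ref{vanishingtriv}, so that $\czm(\gamma^k)$ and $\czm(\gamma)$ are computed with compatible trivializations.

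In the hyperbolic case, $\czm(\gamma^k) = kr$ for an integer $r$ (with $r$ even when $\gamma$ is positive hyperbolic and odd when $\gamma$ is negative hyperbolic), and in particular $\czm(\gamma) = r$. Thus $\czm(\gamma^k) = k\czm(\gamma)$ on the nose, and the desired inequalities
\[
k\czm(\gamma) - k + 1 \;\leq\; \czm(\gamma^k) \;\leq\; k\czm(\gamma) + k - 1
\]
reduce to $-(k-1) \leq 0 \leq k-1$, which is immediate for $k \geq 1$.

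In the elliptic case, $\czm(\gamma^k) = 2\lfloor k\vartheta \rfloor + 1$ with $\vartheta \in \R \setminus \Z$, so in particular $\czm(\gamma) = 2\lfloor \vartheta \rfloor + 1$. Writing $\vartheta = \lfloor \vartheta \rfloor + \{\vartheta\}$ with $\{\vartheta\} \in (0,1)$, one has $\lfloor k\vartheta \rfloor = k\lfloor \vartheta \rfloor + \lfloor k\{\vartheta\} \rfloor$, and $k\{\vartheta\} \in (0,k)$ forces $\lfloor k\{\vartheta\} \rfloor \in \{0,1,\dots,k-1\}$. Substituting gives
\[
2k\lfloor \vartheta \rfloor + 1 \;\leq\; \czm(\gamma^k) \;\leq\; 2k\lfloor \vartheta \rfloor + 2k - 1,
\]
which is exactly $k\czm(\gamma) - (k-1) \leq \czm(\gamma^k) \leq k\czm(\gamma) + (k-1)$.

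There is no real obstacle here beyond bookkeeping: the only subtle point is making sure that the iteration formulas of Proposition \ref{CZs} may legitimately be applied with the same trivialization on $\gamma$ and on $\gamma^k$, which is guaranteed by the convention fixed in Remarks \ref{trivtpy} and \ref{vanishingtriv}. Since both inequalities are sharp in the elliptic case (as $\{\vartheta\}$ varies) and equalities are never violated in the hyperbolic case, the stated bound is the best linear-in-$k$ bound one can hope for in dimension three.
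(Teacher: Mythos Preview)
Your proof is correct and follows essentially the same approach as the paper: split into the hyperbolic case (where $\czm(\gamma^k)=k\czm(\gamma)$ makes the inequality trivial) and the elliptic case (where writing $\vartheta$ as its integer part plus a fractional part in $(0,1)$ and using $0\le\lfloor k\{\vartheta\}\rfloor\le k-1$ gives the bounds directly). The only differences are notational---you use $\{\vartheta\}$ where the paper writes $\vartheta=r+\theta$---and your added remarks on trivializations and sharpness, which are fine but not needed for the argument.
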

\begin{proof}
In the case that  $\ga$ is positive or negative hyperbolic this follows from the preceding lemma since the Conley-Zehnder index grows linearly,
\[
\czm(\ga^k) = k \czm(\ga).
\]
Thus the desired inequality, (\ref{czlove}), is trivially true.  In the case that $\ga$ is elliptic, the above inequality is more meaningful and follows directly from the formula
\[
\czm(\ga^k) = 2 \lfloor k \vartheta \rfloor + 1,
\]
where $\vartheta \in \R \setminus \Z$.  Since $\vartheta \in \R \setminus \Z$ we can write for $r \in \Z$ and $\theta \in (0,1)$
\[
\vartheta = r + \theta.
\]
Then basic properties of the floor function yield for any integer $k$,
\[
\lfloor k (r + \theta) \rfloor = kr + \lfloor k \theta \rfloor
\]
and
\begin{equation}\label{floorlove}
0 \leq \lfloor k \theta \rfloor \leq k-1 < k.
\end{equation}
We have
\[
k\czm(\ga) = 2kr+  k,
\]
thus
\[
k\czm(\ga) -k +1 =2kr+1
\]
and
\[
k\czm(\ga) +k -1=2kr+2k -1. 
\]
Appealing to (\ref{floorlove}) yields (\ref{czlove}) since
\[
2kr+ 1 \leq 2kr +2 \lfloor k \vartheta \rfloor + 1 \leq 2kr+ 2k-1,
\]
\end{proof}
We will use the almost linear behavior of the Conley-Zehnder index to prove the following result.

\begin{proposition}\label{genericJgood}
Let $(M,\A)$ be a nondegenerate contact 3-manifold and   $\gp$, $\gm$ be closed Reeb orbits. Then after a generic choice of $\Jt$ all nontrivial cylinders $u \in \calm(\gp; \gm)$ in $(\R \times M, \Jt)$ satisfy 
\begin{equation}\label{posu}
{\mbox{\em ind}}(u) \geq 1.
\end{equation}

\end{proposition}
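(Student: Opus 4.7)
The plan is to split into cases by whether $u$ is somewhere injective, and in the multiply covered case to reduce to the simple case via Theorem \ref{multiplycovered} together with the iteration estimate of Proposition \ref{almostlinear}. Fix $\Jt$ in the intersection of the residual subsets provided by Corollary \ref{bigdim} and Theorem \ref{HT2thm}.

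First I would handle the somewhere injective case. If $u$ is somewhere injective and nontrivial, then $\pi\circ Du \not\equiv 0$: by the local form (\ref{loc2}) of the Cauchy--Riemann equations, $\pi\circ Du \equiv 0$ forces $u$ to have vanishing area, and then Corollary \ref{noarea} together with the asymptotic convergence of Theorem \ref{reebasym} identifies $u$ as a trivial cylinder. Suppose for contradiction that $\ind(u)\leq 0$. Then $\ind(u)\leq 2$, so Theorem \ref{HT2thm} forces $u$ to be immersed, and Corollary \ref{bigdim} then yields $\ind(u)\geq 1$, a contradiction.

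For a multiply covered $u$, Theorem \ref{multiplycovered} gives a factorization $u = v\circ \varphi$ with $v$ somewhere injective and $\varphi$ a holomorphic cover of positive degree. A Riemann--Hurwitz count on the compactified map $\bar\varphi\colon S^2 \to \Sigma'$, combined with the observation that the punctures of $u$ must map to punctures of $v$ (else $u$ would take finite values there rather than escaping to $\pm\infty$) and that $u$ has one positive and one negative end, forces $\ds'$ to be a cylinder with one positive and one negative puncture and $\varphi$ to be an unbranched $k$-fold cover; hence $\gp = \tilde\gp^k$, $\gm = \tilde\gm^k$ with $v \in \calm(\tilde\gp;\tilde\gm)$. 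Since $u$ is nontrivial so is $v$, and the first case applied to $v$ gives $\ind(v)\geq 1$. Applying Proposition \ref{almostlinear} at each end yields
\begin{equation*}
\ind(u) \;=\; \czm(\tilde\gp^k) - \czm(\tilde\gm^k) \;\geq\; k\,\ind(v) - 2(k-1),
\end{equation*}
which already gives $\ind(u)\geq 2$ whenever $\ind(v)\geq 2$.

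The main obstacle is the borderline case $\ind(v)=1$, where this bound degenerates to $2-k$ and is useless for $k\geq 3$. Here I would run a short parity-based subcase analysis using the explicit iteration formulas of Proposition \ref{CZs}: since $\ind(v)$ is odd, exactly one of $\tilde\gp, \tilde\gm$ is positive hyperbolic. If both are hyperbolic then $\czm$ iterates exactly linearly, so $\ind(u)=k\,\ind(v)=k\geq 1$. In the remaining subcases, with one orbit positive hyperbolic of rotation integer $r$ and the other elliptic with irrational rotation angle $\vartheta$, the equality $\ind(v)=1$ pins $\lfloor\vartheta\rfloor$ to the integer adjacent to $r/2$; the irrationality of $k\vartheta$ combined with the integrality of $kr/2$ then controls $\lfloor k\vartheta\rfloor$ just sharply enough to conclude $\ind(u)\geq 1$ in each subcase.
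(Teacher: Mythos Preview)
Your proof is correct and follows essentially the same route as the paper: handle the somewhere injective case via Corollary~\ref{bigdim} and Theorem~\ref{HT2thm}, then reduce the multiply covered case to it using Proposition~\ref{almostlinear}. The paper dispatches the borderline $\ind(v)=1$ case more directly than your rotation-number computation: since one asymptotic orbit must be hyperbolic its Conley--Zehnder index iterates exactly linearly, so the estimate of Proposition~\ref{almostlinear} loses only $k-1$ on one side and gives $\ind(u)\geq k\cdot 1-(k-1)=1$ immediately.
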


\begin{proof}
We know that after generic choice of $\Jt$ all immersed somewhere injective cylinders have index $\geq 1$; see Corollary \ref{bigdim}.   Moreover any somewhere injective cylinder of index $\leq 2$ is immersed; see Theorem \ref{HT2thm}.  Let $u \in \calm (\ga_+^k; \ga_-^k)$ be a $k$-fold cover of some somewhere injective  $v\in \calm(\gp;\gm)$.  We will treat the cases when $\ind(v)\geq2$ and $\ind(v) =1$ separately. If $\ind(v) \geq 2$ then Proposition \ref{almostlinear} yields
\[
\begin{array}{lcl}
\ind(u)& =& \czm(\ga_+^k)- \czm(\ga_-^k) \\
&=& k\czm(\gp) - \czm(\gm) -2k + 2 \\
&\geq& 2. \\
\end{array}
\]
If $\ind(v)=1$ we can improve the estimate of Proposition \ref{almostlinear} as one of the orbits must be hyperbolic, yielding
\[
\ind(u) = k\czm(\gp) -k\czm(\gm)-k+1 \geq 1.
\]

 \end{proof}
 
 The next result provides the regularity needed for Conditions (A) and (B). 

 \begin{corollary}
 Let $(M,\A)$ be a nondegenerate contact 3-manifold.  Then after a generic choice of $\A$-compatible $\Jt$ all nontrivial cylinders of index $\leq 2$ in the symplectization $(\R \times M, d(e^\tau \A), \Jt)$  are regular, including unbranched multiply covered cylinders.
 \end{corollary}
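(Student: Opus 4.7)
The plan is to reduce the corollary to the automatic transversality criterion of Theorem \ref{Wtrans1}, using the two ingredients already supplied by Lemma \ref{Z2} and Proposition \ref{genericJgood}. For any cylinder $u$ one has $g=0$ and $\#\g(u)=2$, so the simplified form of the criterion recorded in Remark \ref{indexremark} reads
\[
\ind(u) > \#\g_0(u) - 2 + 2Z(du).
\]

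First I would invoke Lemma \ref{Z2}: for a generic $\A$-compatible $\Jt$, every unbranched cylinder $u$ of index $\leq 2$ satisfies $Z(du)=0$. In the somewhere injective case this follows from Theorem \ref{HT2thm}, and in the multiply covered case it follows from the absence of branch points of the covering $\varphi$, since by the chain rule $Z(du)=Z(d\varphi)=0$ once one knows the underlying simple curve is immersed. Second, I would invoke Proposition \ref{genericJgood}: after possibly shrinking the generic subset of almost complex structures, every nontrivial cylinder $u \in \calm(\gp;\gm)$ satisfies $\ind(u) \geq 1$.

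With these two ingredients in hand, Wendl's inequality reduces to $\ind(u) > \#\g_0(u) - 2$. Since a cylinder has $\#\g_0(u) \in \{0,1,2\}$, the right-hand side is at most $0$, whereas $\ind(u) \geq 1$. Hence the transversality inequality holds in every case, and Theorem \ref{Wtrans1} produces the required regularity of $u$.

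The real work has already been done upstream: the almost-linear behavior of the Conley-Zehnder index under iteration (Proposition \ref{almostlinear}) is what prevents the index of a multiple cover from collapsing below that of its underlying curve, and so drives both Proposition \ref{genericJgood} and the reduction to $\ind(v) \leq 2$ needed in Lemma \ref{Z2}. The remaining obstacle here is merely bookkeeping, namely verifying the case-analysis on $\#\g_0(u)$ and confirming that one may intersect the two generic subsets of $U_\Delta$ produced by Proposition \ref{genericJgood} and Lemma \ref{Z2} while preserving residuality.
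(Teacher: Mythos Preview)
Your proposal is correct and follows essentially the same route as the paper: both reduce to the inequality $\ind(u) > \#\g_0(u) - 2 + 2Z(du)$ from Theorem~\ref{Wtrans1}, then invoke Lemma~\ref{Z2} to obtain $Z(du)=0$, bound $\#\g_0(u) \leq 2$, and use Proposition~\ref{genericJgood} to get $\ind(u) \geq 1$. Your additional remarks about intersecting residual sets and the role of Proposition~\ref{almostlinear} upstream are accurate but not needed for the argument itself.
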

 
 \begin{proof}
To ensure that transversality holds, by Theorem \ref{Wtrans1} it suffices to demonstrate that the following inequality (\ref{transcyl}) holds, 
\begin{equation}\label{eqnreg}
{\ind}(u) > \# \g_0(u) -2 + 2Z(du).
\end{equation}
If $\Jt$ has been chosen generically, then $Z(du)=0$ for all finite energy cylinders of index $\leq 2$ by Lemma \ref{Z2}.  The subset $\g_0(u) \subset \g$ consists of punctures for which the asymptotic orbit has even Conley-Zehnder index, and hence $\# \g_0(u) \leq 2$.  Thus 
\[
\# \g_0(u) -2 + 2Z(du) \leq 0.
\]
 By Proposition \ref{genericJgood}, all nontrivial cylinders $u$ satisfy
\[
{\ind}(u) \geq 1,
\]
thus (\ref{eqnreg}) holds.
 \end{proof}

\subsection{Numerics of branched covers}\label{numerology}   All but the last result of this section hold for dynamically convex contact forms, as opposed to the more restrictive class of dynamically separated contact forms found in Definition \ref{taut}.  To prove Theorem \ref{conditions2} we  exclude all multiply covered curves which would obstruct the construction of a cylindrical contact chain complex, as described in Section \ref{quandaries}.  This is accomplished by obtaining a lower bound on the index of a multiply covered curve, making use of the Riemann-Hurwitz theorem and Conley-Zehnder index calculations.

Recall that we denote $\gamma^\ell_{+}$ to be the $\ell$-fold cover of a simple orbit $\gp$ and $\gamma^d_{-}$ the $d$-fold cover of a simple orbit $\gm$; see Figures \ref{argf1} and \ref{argf2}.  Depending on the multiplicities of the orbits and existence of a covering map, the curve $u \in \calm (\glp;\gdm)$ may or may not be multiply covered.  An example of a branched cover is given in Figure \ref{argf3}

\begin{figure}[ht]
\begin{minipage}[b]{0.32\linewidth}
\centering
 \includegraphics[width=.38\linewidth]{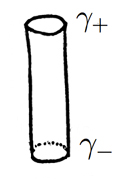}
\caption{\small A simple cylinder.}
\label{argf1}
\end{minipage}
\begin{minipage}[b]{0.33\linewidth}
\centering
\includegraphics[width=.43\linewidth]{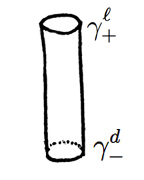}
\caption{$\mbox{\small The plot thickens.}$}
\label{argf2}
\end{minipage}
\begin{minipage}[b]{0.32\linewidth}
\centering
\includegraphics[width=.9\linewidth]{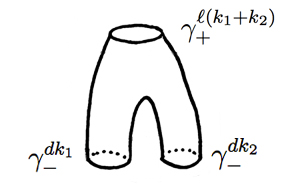}
\caption{$\mbox{\small A branched cover.}$}
\label{argf3}
\end{minipage}
\end{figure}

The definition of \textbf{dynamically convex} contact forms first appeared in 
\cite{HWZtight} and necessitates that the Conley-Zehnder index of contractible periodic orbits of the Reeb vector field be well-defined without any reference to a specific homotopy class of discs spanned by the orbits.  
If we only care about contractible loops admitting a well-defined Conley-Zehnder grading then for every map
\[
v: S^2 \to M
\]
the integer $c_1(v^*\xi)([S^2])$ must vanish.  The stipulation that $c_1(v^*\xi)([S^2])\equiv 0$ is equivalent to $\psi_\xi \equiv 0$, where $\psi_\xi$ is the natural homomorphism defined by
\begin{equation}\label{naturalhomo}
\begin{array}{crcl}
\psi_\xi: & \pi_2(M) & \to & \Z, \\
& [\sigma] & \mapsto& c_1(v^*\xi). \\
\end{array}
\end{equation}

\begin{definition}\label{dyncon}\em
Let $\A$ be a nondegenerate contact form associated to a closed 3-manifold $M$.  Assume that  the map $\psi_\xi =0$ from (\ref{naturalhomo}).  Then $\A$ is said to be \textbf{dynamically convex} whenever
\[
\czm(\ga) \geq 3
\]
for all contractible Reeb orbits $\ga$ of the Reeb vector field $R_\A$.
\end{definition}
Note that if we assume $c_1(\xi)=0$ then there is a well-defined Conley-Zehnder grading for all loops, modulo the choice of a complex volume form on $(\R \times M, \Jt)$.

Next we recall the Riemann-Hurwitz Theorem.
\begin{theorem}[Hartshorne, Corollary IV.2.4]\label{RH}
Let $\varphi:\widetilde{\dot{\Sigma}} \to \ds$ be a compact $k$-fold cover of the Riemann surface $\ds$.  Then
\[
\chi(\widetilde{\dot{\Sigma}}) =  k\chi(\ds) - \sum_{p \in\widetilde{\dot{\Sigma}}} (e(p)-1),
\]
where $e(p)-1$ is the ramification index of $\varphi$ at $p$.
\end{theorem}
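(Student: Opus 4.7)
The plan is to prove the formula by comparing Euler characteristics via a well-chosen triangulation of the base, pulled back to the cover through $\varphi$. First I would triangulate $\dot{\Sigma}$ so that every branch value of $\varphi$—that is, every image of a ramification point—appears as a vertex. Denote the resulting counts of vertices, edges and faces by $V$, $E$, $F$, so that $\chi(\dot{\Sigma}) = V - E + F$. Since the branch locus downstairs is discrete and $\varphi$ is a local homeomorphism away from ramification points, each open edge and each open face is disjoint from the branch locus, so I can pull the simplicial structure back along $\varphi$ to obtain a triangulation of $\widetilde{\dot{\Sigma}}$.

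Next I would count simplices upstairs. Each open edge and open face of $\dot{\Sigma}$ is covered by a trivial $k$-sheeted unramified cover, so it lifts to exactly $k$ distinct simplices in $\widetilde{\dot{\Sigma}}$; hence $\widetilde{E} = kE$ and $\widetilde{F} = kF$. For vertices I need to account for the collapsing that occurs at ramification points: if $q \in \dot{\Sigma}$ is a vertex with preimages $p_1, \dots, p_r$ having ramification indices $e(p_1), \dots, e(p_r)$, then $\sum_i e(p_i) = k$, so the number of distinct preimages of $q$ is $r = k - \sum_i (e(p_i) - 1)$. Because $e(p) = 1$ at every non-ramification point, summing over all vertices of $\dot{\Sigma}$ gives
\[
\widetilde{V} \;=\; kV \;-\; \sum_{p \in \widetilde{\dot{\Sigma}}} (e(p) - 1).
\]

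Assembling the three counts yields
\[
\chi(\widetilde{\dot{\Sigma}}) \;=\; \widetilde{V} - \widetilde{E} + \widetilde{F} \;=\; k(V - E + F) - \sum_{p \in \widetilde{\dot{\Sigma}}} (e(p) - 1) \;=\; k\chi(\dot{\Sigma}) - \sum_{p \in \widetilde{\dot{\Sigma}}} (e(p) - 1),
\]
which is the desired formula. The only step requiring care is justifying that the pullback triangulation makes sense at a ramification point $p$; for this I would invoke the local normal form for a holomorphic branched cover, which identifies $\varphi$ near $p$ with the map $z \mapsto z^{e(p)}$ on a disk. This model makes it manifest that a small star-shaped neighborhood of $q = \varphi(p)$ pulls back to a star-shaped neighborhood of $p$ with the expected combinatorics, after a simplicial refinement if needed. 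I do not anticipate a serious obstacle here; the essential content is the elementary Euler characteristic bookkeeping, with the local structure theorem doing all the analytic work.
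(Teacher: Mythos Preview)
Your argument is the standard triangulation proof of the Riemann--Hurwitz formula and is correct; the local normal form $z\mapsto z^{e(p)}$ is exactly what justifies the vertex count. There is nothing to compare against, however: the paper does not prove this statement but simply quotes it as Corollary~IV.2.4 of Hartshorne, treating it as a classical input to the subsequent index calculations.
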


We will use $b$ to keep track of the number of branch points counted with multiplicity:
\begin{equation}\label{b}
b:=\sum_{p \in \widetilde{\dot{\Sigma}}} (e(p)-1).
\end{equation}
At unbranched points $p$ we have $e(p)-1=0$, thus for any $q \in \ds$, 
\[
\sum_{p \in \varphi^{-1}(q)}e(p)=k.
\]

\begin{remark}\em
The multiplicity of the Reeb orbits of the cover of an asymptotically cylindrical curve are determined by the monodromy, which follows from Section 2 \cite{branchcourse}  with the local behavior of a curve near its punctures in \cite{MiWh, Sief}.
\end{remark}


We obtain the following result.
\begin{proposition}\label{Hurwitztentacles}
Let $(M,\A)$ be a nondegenerate contact manifold and $J$ be a generic $\A$-compatible almost complex structure.  Let $u \in \calm(\gp; \ga_0, ... \ga_s)$ be a somewhere injective curve.  Then any genus zero $k$-fold cover $\widetilde{\calc}$ of $\calc$  with 1 positive puncture must have  $1+ks+b$ negative punctures and satisfies
\begin{equation}\label{RHtentacleseqn}
{\mbox{\em ind}}(\widetilde{\calc}) \geq 2-k + 2b.
\end{equation}
\end{proposition}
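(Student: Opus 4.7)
The plan is to combine the topological count from Riemann--Hurwitz (Theorem \ref{RH}) with the iteration estimate for the Conley--Zehnder index of Proposition \ref{almostlinear}, closing with the generic positivity $\ind(u)\geq 1$ for somewhere injective curves from Corollary \ref{bigdim}.

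First I would pin down the topology of $\widetilde{\ds}$. Since $u$ is of genus zero with $1+(s+1)=s+2$ punctures, $\chi(\ds)=-s$. Riemann--Hurwitz applied to the degree $k$ branched cover $\varphi\colon\widetilde{\ds}\to\ds$ gives $\chi(\widetilde{\ds})=-ks-b$. Combined with $\widetilde{\calc}$ being of genus zero with one positive puncture, so that $\chi(\widetilde{\ds})=1-n_-$, this forces exactly $n_-=1+ks+b$ negative punctures, as claimed.

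Next I would identify the asymptotic orbits and compute the index. The unique preimage of the positive puncture must have ramification index $k$, so $\widetilde{\calc}$ is asymptotic to $\gp^k$ at its positive end. For each negative puncture of $u$ (with asymptotic $\ga_i$, $0\leq i\leq s$), let $m_{i,1},\dots,m_{i,p_i}$ be the ramification indices of its preimages in $\widetilde{\ds}$; then $\sum_j m_{i,j}=k$, the corresponding asymptotic orbits are $\ga_i^{m_{i,j}}$, and the total count $\sum_i p_i=1+ks+b$ matches the calculation above. Plugging into the index formula (\ref{indexformula}) and applying the lower bound $\czm(\gp^k)\geq k\,\czm(\gp)-k+1$ to the positive end together with the upper bound $\czm(\ga_i^{m_{i,j}})\leq m_{i,j}\czm(\ga_i)+m_{i,j}-1$ to each negative end from Proposition \ref{almostlinear}, the sums telescope via $\sum_j m_{i,j}=k$ and $\sum_i p_i=1+ks+b$, giving
\[
\ind(\widetilde{\calc}) \;\geq\; k\,\ind(u) + 2 - 2k + 2b.
\]

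Finally I would invoke Corollary \ref{bigdim}: for a generic $\A$-compatible $\Jt$, a somewhere injective curve $u$ with $\pi\circ Du\not\equiv 0$ satisfies $\ind(u)\geq 1$, and inserting this into the inequality above yields $\ind(\widetilde{\calc})\geq 2-k+2b$, as desired. A somewhere injective $u$ with $\pi\circ Du\equiv 0$ must be a simple trivial cylinder, whose covers fall under the branched covers of trivial cylinders treated separately in Lemma \ref{nontrivialtentacles}. The main obstacle is combinatorial rather than analytic: carefully matching the Riemann--Hurwitz count $b$ of ramification contributions with the partitions of $k$ describing the preimage structure at each puncture, so that the positive--end lower bound and the negative--end upper bounds telescope into a clean expression in terms of $\ind(u)$ alone.
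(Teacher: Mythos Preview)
Your approach is essentially identical to the paper's: Riemann--Hurwitz for the puncture count, the iteration bounds of Proposition \ref{almostlinear} at both ends, telescoping to $\ind(\widetilde{\calc})\geq k\,\ind(u)+2-2k+2b$, and then $\ind(u)\geq 1$ by genericity. Your bookkeeping with the ramification partitions $m_{i,j}$ is in fact a bit more explicit than the paper's inequality (\ref{waytogo}), but the content is the same.

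One small gap: Corollary \ref{bigdim} applies to \emph{immersed} somewhere injective curves, not merely somewhere injective ones, and you invoke it without verifying immersion. The paper closes this by a dichotomy: either $\ind(u)>1$ and there is nothing to prove, or $\ind(u)\leq 1$, in which case Proposition \ref{Z} forces $Z(du)=0$, so $u$ is immersed and Corollary \ref{bigdim} then gives $\ind(u)\geq 1$. You should insert this step before citing Corollary \ref{bigdim}. Your remark on the $\pi\circ Du\equiv 0$ case (trivial cylinders, handled by Proposition \ref{trivcyl}) is a reasonable aside, though in the applications of this proposition the underlying curve has $s\geq 1$ negative ends and is never a trivial cylinder.
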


\begin{proof}
Recall that the index of the underlying curve $\calc$ is
\begin{equation}\label{underlying}
\begin{array}{lcl}
{\ind}(\calc) &=& s + \czm(\gp) - \displaystyle \sum_{i=0}^s\czm(\ga_i), \\
\end{array}
\end{equation}
and that Lemma \ref{almostlinear}  yields
\begin{equation}\label{iterateineq}
k \czm(\ga) - k +1 \leq \czm(\ga^k) \leq k \czm(\ga) + k -1.
\end{equation}

From the Riemann-Hurwitz Theorem if $\widetilde{u}$ has 1 positive puncture then it must have $1+ks+b$ negative punctures.  Let $\delta_0,...,\delta_{ks+b}$ denote the Reeb orbits at which $\widetilde{u}$ has negative ends; these are covers of $\ga_0,...,\ga_s$. Moreover,
\begin{equation}\label{waytogo}
\sum_{i=0}^{ks+b}\czm(\delta_i) \leq k \sum_{i=0}^s \czm(\ga_i) + (k(s+1)-(ks+b+1))
 \end{equation}
Then (\ref{iterateineq}) and (\ref{waytogo}) yield
\[
\begin{array}{lcl}
{\ind}(\widetilde{\calc}) &=& ks+b+ \czm(\ga_+^k) - \displaystyle \sum_{i=0}^{ks+b}\czm(\delta_i) \\
& \geq & ks  + b + (\displaystyle k\czm(\gp) -k +1) - k \sum_{i= 0}^s \czm(\ga_i) - k +b +1  \\
& = & k\left(  s + \czm(\gp) -\displaystyle \sum_{i=0}^s\czm(\ga_i)\right) -2k + 2b + 2 \\
&=& k \cdot {\ind}(\calc) -2k + 2b +2 \\
& \geq & 2- k + 2b. \\
\end{array}
\]
Note that we obtain that ${\ind}(\calc) \geq 1$, since $\calc$ is somewhere injective with either $\ind(\calc) >1$ or $\ind(\calc) \leq 1$.  In the latter case the results of Proposition \ref{Z} allow us to conclude that $\calc$ is actually immersed, thus the results of \cite{HWZ3} hold given the genericity of $\Jt$. Thus (\ref{RHtentacleseqn}) is obtained as desired. 
\end{proof}

Before the reader worries that in some cases $2-k +2b \leq 0$, we note that this is not problematic because we will cap off $ks+b$ ends, each of which have index $\geq 2$; precise arguments appear in a subsequent series of lemmas.

Next we improve the preceding result when the underlying curve is a cylinder.

\begin{proposition}\label{cyl}
Let $(M,\A)$ be a nondegenerate contact manifold and $J$ be a generic $\A$-compatible almost complex structure.  Let $\calc \in \calm(\gp; \gm)$ be a nontrivial cylinder.  Then any genus zero branched $k$-fold cover $\widetilde{\calc}$ of $\calc$  with 1 positive puncture must be an element of $\calm(\ga_+^k;\ga_-^{k_1},...\ga_-^{k_n})$ where $k:=k_1+...+k_n$.  Moreover,
\begin{itemize}
\item[\em (i)]if ${\mbox{\em ind}}(\calc) \geq 2$ then ${\mbox{\em ind}}(\widetilde{u}) \geq 2n$;
\item[\em (ii)] if ${\mbox{\em ind}}(\calc) = 1$ with $\gp$ hyperbolic then ${\mbox{\em ind}}(\widetilde{u}) \geq 2n-1$;
\item[\em (iii)] if ${\mbox{\em ind}}(\calc) = 1$ with $\gm$ hyperbolic then ${\mbox{\em ind}}(\widetilde{u}) \geq n$.
\end{itemize}
\end{proposition}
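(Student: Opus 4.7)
The plan is to combine Riemann-Hurwitz with the Fredholm index formula and the almost-linearity of the Conley-Zehnder index under iteration. First, since $\widetilde{\calc}$ has a single positive puncture lying over the positive puncture of $\calc$, the covering map $\varphi$ is totally ramified there, forcing the positive asymptotic orbit to be $\gp^k$. The $n$ negative punctures of $\widetilde{\calc}$ lie over the negative puncture of $\calc$ with local multiplicities $k_1,\dots,k_n$ summing to $k$, so $\widetilde{\calc} \in \calm(\gp^k; \gm^{k_1},\dots,\gm^{k_n})$ as claimed. Theorem \ref{RH} applied to the compactified covering further constrains the number of interior branch points to be $n-1$, though this value will not appear explicitly in what follows.

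Next I would apply the index formula (\ref{indexformula}) with $s = n$ negative punctures, giving
\begin{equation*}
\ind(\widetilde{\calc}) = (n-1) + \czm(\gp^k) - \sum_{i=1}^n \czm(\gm^{k_i}),
\end{equation*}
while the underlying cylinder satisfies $\ind(\calc) = \czm(\gp) - \czm(\gm)$. From Proposition \ref{almostlinear}, $\czm(\gp^k) \geq k\czm(\gp) - k + 1$, and summing the companion inequality $\czm(\gm^{k_i}) \leq k_i\czm(\gm) + k_i - 1$ over $i$ produces $\sum_{i=1}^n \czm(\gm^{k_i}) \leq k\czm(\gm) + k - n$. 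Substituting and collecting terms yields the master inequality
\begin{equation*}
\ind(\widetilde{\calc}) \geq k\cdot \ind(\calc) + 2n - 2k,
\end{equation*}
which immediately handles case (i): if $\ind(\calc)\geq 2$ then $\ind(\widetilde{\calc}) \geq 2n$.

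For cases (ii) and (iii), in which $\ind(\calc) = 1$, the master inequality alone only gives $\ind(\widetilde{\calc}) \geq 2n - k$, which is too weak. The key refinement is Proposition \ref{CZs}: for a hyperbolic orbit $\ga$ the Conley-Zehnder index iterates exactly linearly, $\czm(\ga^m) = m\czm(\ga)$. In case (ii), with $\gp$ hyperbolic, I would replace the bound $\czm(\gp^k) \geq k\czm(\gp) - k + 1$ by the sharp equality $\czm(\gp^k) = k\czm(\gp)$, recouping the $k-1$ units of slack on the positive side and producing $\ind(\widetilde{\calc}) \geq 2n-1$. In case (iii), with $\gm$ hyperbolic, I would replace the summed upper bound by the sharp equality $\sum_{i=1}^n \czm(\gm^{k_i}) = k\czm(\gm)$, recouping the $k-n$ units of slack on the negative side and yielding $\ind(\widetilde{\calc}) \geq n$.

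The main obstacle is essentially careful bookkeeping: the stated bounds are tight and the proof succeeds only because the slack in Proposition \ref{almostlinear} is attributed cleanly to the top orbit (a loss of $k-1$) and the bottom orbits (a loss of $k-n$), so hyperbolicity of either endpoint of $\calc$ recoups exactly the slack from that side. No further analytic input is needed beyond Riemann-Hurwitz, the Fredholm index formula, and the iteration behavior of the Conley-Zehnder index already established.
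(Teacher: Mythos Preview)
Your proof is correct and follows essentially the same approach as the paper: both apply the index formula (\ref{indexformula}), bound the iterated Conley--Zehnder indices via Proposition \ref{almostlinear}, and then sharpen the estimate in cases (ii) and (iii) by using the exact linearity $\czm(\ga^m) = m\,\czm(\ga)$ for hyperbolic orbits. The only difference is organizational---you package the argument as a single ``master inequality'' $\ind(\widetilde{\calc}) \geq k\cdot\ind(\calc) + 2n - 2k$ and then recoup the slack, whereas the paper redoes the computation separately in each case---but the content is identical.
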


\begin{proof}
\textbf{(i): Let $\ind(\calc)\geq2$.} Then \[
\begin{array}{lcl}
\ind(\widetilde{u}) &= &n-1 + \czm(\ga^k) - \displaystyle \sum_{i=1}^n \ga_-^{k_i} \\
 &\geq& n-1 + (\displaystyle k\czm(\gp) -k +1) - \left(k \sum_{i= 0}^s \czm(\ga_-) \right) - k + n,  \mbox{ by (\ref{iterateineq}) } \\
 &=& 2n-2k + k(\czm(\gp) - \czm(\ga_-)) \\
 &\geq& 2n. \\
\end{array}
\]
If $\ind(\calc)=1$ than either $\gp$ or $\gm$ must be positive hyperbolic and we can sharpen (\ref{iterateineq}) from Lemma \ref{almostlinear} to obtain the desired results in (ii) and (iii). 

\noindent \textbf{(ii): The positive end $\gp$ is hyperbolic.} Then $\czm(\ga^k_+) = k\czm(\gp),$ yields
\vspace{-.3cm}
\[
\begin{array}{lcl}
\ind(\widetilde{u}) &= & n-1 + k\czm(\gp) - \displaystyle \sum_{i=1}^n \czm(\ga_-^{k_i}) \\
&\geq& n -1 + k\czm(\gp) - \displaystyle \sum_{i=1}^n \left( k_i\czm(\gm) +k_i -1 \right),   \mbox{ by (\ref{iterateineq}) } \\
&=&2n-1+k(\czm(\gp)-\czm(\gm)) -k \\
&\geq& 2n-1. \\
\end{array}
\]
\noindent \textbf{(iii): The negative end $\gm$ is hyperbolic.} For all $i$, $\czm(\ga_-^{k_i}) = k_i\czm(\gm)$, thus
\vspace{-.3cm}
\[
\begin{array}{lcl}
\ind(\widetilde{u}) &= & n-1 + \czm(\ga_+^k) - \displaystyle \sum_{i=1}^n k_i\czm(\gm) \\
&\geq& n -1 + k\czm(\gp) - k +1 - k\czm(\gm),   \mbox{ by (\ref{iterateineq})} \\ 
&=&n + k(\czm(\gp)-\czm(\gm)) - k \\
&\geq & n. \\
\end{array}
\]
\end{proof}

Next we consider multiply covered trivial cylinders, which appears as Lemma 1.7 of \cite{HT1}, but for the sake of completeness we provide its quick proof. 

\begin{proposition}\label{trivcyl}
Let $(M,\A)$ be any nondegenerate contact 3-manifold with $\Jt$ a generic $\A$-compatible almost complex structure.  Let $\calc \in \calm(\ga; \ga)$ be a trivial cylinder.  Then any genus zero  $k$-fold cover $\widetilde{\calc}$ of $\calc$  with 1 positive puncture must either be an element of $\calm(\ga^k;\ga^k)$ or $\calm(\ga^k;\ga^{k_1},...\ga^{k_n})$ where $k:=k_1+...+k_n$.  In the former case we have 
\[
{\mbox{\em ind}}(\widetilde{u})=0 .
\]
In the latter case when $\widetilde{u} \in \calm(\ga^k;\ga^{k_1},...\ga^{k_n})$ we have
\begin{equation}\label{RHtrivial}
{\mbox{\em ind}}(\widetilde{\calc}) \geq 0.
\end{equation}
\end{proposition}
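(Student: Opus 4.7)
The plan is to apply the Riemann--Hurwitz theorem (Theorem \ref{RH}) to pin down the topology of the domain, then combine the index formula (\ref{indexformula}) with the Conley--Zehnder iteration formulas from Proposition \ref{CZs} to deduce the bound $\ind(\widetilde{\calc}) \geq 0$. Since $\calc$ is the trivial cylinder over $\ga$, any $k$-fold cover of it is asymptotic at its single positive puncture to $\ga^k$ and at its $n$ negative punctures to $\ga^{k_1},\ldots,\ga^{k_n}$ with $k_1+\cdots+k_n=k$.

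First I would apply Theorem \ref{RH} with $\chi(\ds)=2-2=0$ and $\chi(\widetilde{\ds})=2-(1+n)=1-n$ to conclude $b=n-1$. In the unbranched subcase $b=0$, which forces $n=1$; the cover $\widetilde{\calc}$ is then itself a cylinder in $\calm(\ga^k;\ga^k)$ and (\ref{indexformula}) immediately gives $\ind(\widetilde{\calc}) = 0 + \czm(\ga^k)-\czm(\ga^k) = 0$. In the branched subcase $n\geq 2$ we have $b=n-1\geq 1$ and the index formula reads
\[
\ind(\widetilde{\calc}) = (n-1) + \czm(\ga^k) - \sum_{i=1}^n \czm(\ga^{k_i}).
\]
So it suffices to show $\czm(\ga^k) - \sum_{i=1}^n \czm(\ga^{k_i}) \geq 1-n$.

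Here I would split on the type of $\ga$. If $\ga$ is hyperbolic (positive or negative), Proposition \ref{CZs} gives $\czm(\ga^j) = j\,\czm(\ga)$ for every $j$, so the difference telescopes to $0$ and one actually obtains the stronger bound $\ind(\widetilde{\calc}) = n-1 \geq 1$. If $\ga$ is elliptic with rotation angle $\vartheta\in\R\setminus\Z$, Proposition \ref{CZs} gives $\czm(\ga^j) = 2\lfloor j\vartheta\rfloor + 1$, so the difference becomes
\[
2\lfloor k\vartheta \rfloor - 2\sum_{i=1}^n \lfloor k_i \vartheta \rfloor + (1-n).
\]
The crucial ingredient is the superadditivity of the floor function, $\lfloor \sum_i k_i\vartheta \rfloor \geq \sum_i \lfloor k_i \vartheta \rfloor$, which applied here yields $\czm(\ga^k) - \sum \czm(\ga^{k_i}) \geq 1-n$ and therefore $\ind(\widetilde{\calc}) \geq (n-1) + (1-n) = 0$, giving (\ref{RHtrivial}).

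There is no real obstacle beyond the careful bookkeeping above; the only place where one might hesitate is the negative hyperbolic case, since its even iterates can be bad orbits and the parity behavior is somewhat delicate, but $\czm$ still iterates strictly linearly there so the telescoping works unchanged. I would also flag that the bound is sharp only when $\ga$ is elliptic and $k\vartheta$ has the same fractional part as $\sum k_i\vartheta$; otherwise the inequality becomes strict and we get $\ind(\widetilde{\calc})\geq 1$, a refinement that will be useful later when excluding index-0 branched covers from relevant compactifications.
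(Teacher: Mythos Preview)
Your proof is correct and follows essentially the same approach as the paper: both split into the hyperbolic and elliptic cases, use linearity of $\czm$ under iteration in the former to get $\ind(\widetilde{\calc})=n-1\geq 1$, and use superadditivity of the floor function in the latter to get $\ind(\widetilde{\calc})\geq 0$. You additionally spell out the Riemann--Hurwitz step identifying $b=n-1$, which the paper leaves implicit; one small slip in your closing remark is that $k\vartheta=\sum k_i\vartheta$ always, so the sharpness condition should instead be phrased as $\lfloor k\vartheta\rfloor=\sum_i\lfloor k_i\vartheta\rfloor$.
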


\begin{proof}
When $\calm(\ga^k;\ga^k)$ the result holds trivially.  When $\widetilde{u} \in \calm(\ga^k;\ga^{k_1},...\ga^{k_n})$ where $k:=k_1+...+k_n$ the result follows from the formulas of the Conley-Zehnder index of hyperbolic and elliptic orbits given in Proposition \ref{CZs}.  

If $\ga$ is hyperbolic then (\ref{RHtrivial}) holds trivially because the Conley-Zehnder index of $\ga$ increases linearly under iteration, thus
\[
\begin{array}{rcl}
\ind(\widetilde{u})&=&n-1 + \czm(\ga^k) - \displaystyle \sum_{i=1}^n\czm(\ga^{k_i}) \\
&=& n-1 + k\czm(\ga) -k\czm(\ga) \\
&=&n-1\\
& \geq & 1, \mbox{ since $n>1$.}
\end{array}
\]

If $\ga$ is elliptic with rotation angle $\vartheta$ then $\czm(\ga^k) = 2 \lfloor k \vartheta \rfloor + 1$, thus 
\[
\begin{array}{rcl}
\ind(\widetilde{u})&=&n-1 + \czm(\ga^k) - \displaystyle \sum_{i=1}^n \czm(\ga^{k_i}) \\
&=& n-1 +  2 \lfloor k \vartheta \rfloor + 1 -  \displaystyle \sum_{i=1}^n (2 \lfloor k_i \vartheta \rfloor - 1) \\
& \geq &   0. \\
\end{array}
\]
\end{proof}

The final step in the proof of the main result, Theorem \ref{conditions2}, is the following series of four inductive lemmas utilizing the above numerics. These results will allow us to exclude such complicated compactifications as in Figure \ref{egads}.  Before preceding, we recall the definition of a pseudoholomorphic building from \cite{BEHWZ}, which we adapt to our setting in which all curves and their limits are non-nodal and unmarked.

 \begin{figure}[h!]
  \centering
    \includegraphics[scale=2]{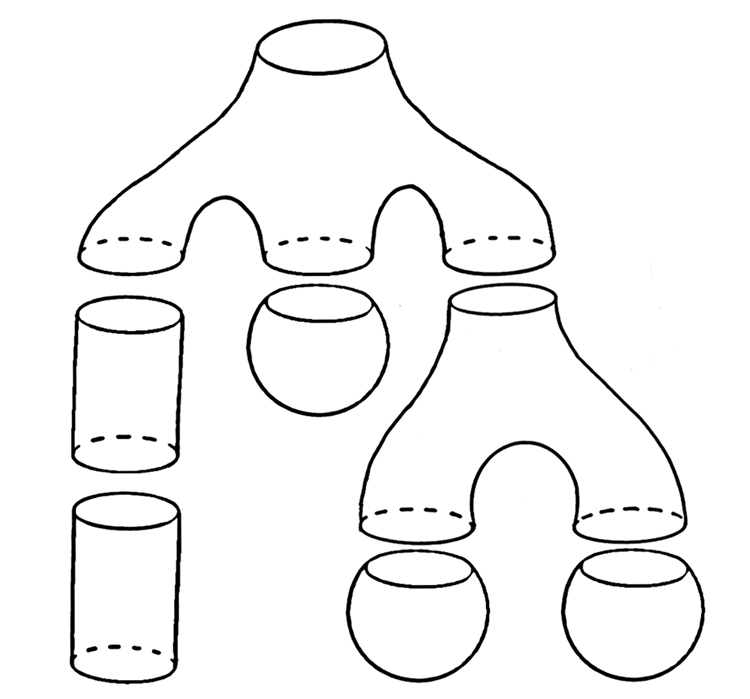}
\caption{A complicated element of $\mbar(\gp;\gm)$ best avoided.}
\label{egads}
\end{figure}

\begin{definition}\label{building} \em
We will call any asymptotically cylindrical curve $u_i=[\Sigma_i, j_i, \g_i:=\g^+_i \sqcup \g^+_i, u_i]$, with $\Sigma_i$ a possibly disconnected, \textbf{height-1 non-nodal building}, or height-1 building for short.  
Assuming there are bijections $\Phi_i: \to \g_i^- \to \g_{i+1}^+$ between the negative punctures of one curve and the positive punctures of the curve next in the sequence, a \textbf{height-$k$ non-nodal building} is defined to consist of a sequence $(u_1,...u_m)$ of $m$ height-1, non-nodal buildings and the collection $(\Phi_1,...\Phi_{m-1})$, provided the punctures identified via $\Phi_i$ have the same asymptotic limit. 
\end{definition}

\begin{remark}\label{nontrivbuilding}\em
Throughout the following lemmas we assume that each level $u_i$ of the building $(u_1,..,u_m)$  contains at least one nontrivial component, i.e. a component which is neither a trivial cylinder nor a constant map.   
\end{remark}

\begin{lemma}\label{lemma1}
Let $\calb:=(u_1,...u_m)$ be a genus 0 building with 1 positive contractible end, no negative ends, associated to the symplectization of a contact 3-manifold equipped with a nondegenerate dynamically convex contact form and generic $\A$-compatible $\Jt$.  Then 
\[
{\mbox{\em ind}}(\calb) \geq 2,
\]
and if equality holds, $\calb$ consists solely of one pseudoholomorphic plane.
\end{lemma}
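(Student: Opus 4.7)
The plan is to prove the inequality via an Euler-characteristic telescoping and to handle the equality case by induction on the number $N$ of nontrivial components of $\calb$.

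For the inequality, I would arrange the nontrivial components of $\calb$ into a tree, contracting each chain of trivial cylinders between two nontrivial components to a single internal edge (trivial cylinders contribute $0$ to both the index and the Euler characteristic). Because $\calb$ has genus zero, one external positive puncture $\gp$ and no external negative punctures, the tree has $N$ nodes, $N-1$ internal edges, and $1$ external edge, so the total number of punctures across the nontrivial components $C_j$ equals $2(N-1)+1=2N-1$. Thus $\sum_j \chi(C_j)=2N-(2N-1)=1$, and since the Conley-Zehnder contributions at matched internal ends cancel in pairs, the component indices telescope to $\ind(\calb)=-\sum_j\chi(C_j)+\czm(\gp)=-1+\czm(\gp)$. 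Dynamical convexity (Definition \ref{dyncon}) then yields $\ind(\calb)\geq 2$.

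For the equality case, I would induct on $N$. The base $N=1$ is immediate: the unique nontrivial component has one positive end and no negative ends, hence is a plane $u$ with $\ind(u)=-1+\czm(\gp)=2$. For $N\geq 2$, decompose $\calb$ into its unique top component $u_1$, which has $\gp$ as its sole external positive end and $s\geq 1$ matched negative ends $\ga_1,\dots,\ga_s$, together with sub-buildings $\calb_i$ filling in each $\ga_i$. Because $\calb_i$ caps $\ga_i$ by a pseudoholomorphic surface in $\R\times M$, the orbit $\ga_i$ is contractible, so dynamical convexity gives $\czm(\ga_i)\geq 3$, and the inductive hypothesis gives $\ind(\calb_i)\geq 2$. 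Assuming $\ind(\calb)=2$, additivity of the index forces $\ind(u_1)\leq 2-2s$, which I would contradict case by case.

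If $s=1$ then $u_1$ is a nontrivial cylinder with $\ind(u_1)\leq 0$, directly contradicting Condition (B) of Theorem \ref{conditions}. If $s\geq 2$ then $\ind(u_1)\leq -2$; a somewhere injective $u_1$ is excluded by Corollary \ref{bigdim}. If instead $u_1$ is a $k$-fold cover of a somewhere injective curve $v$, I further split: when $v$ is a nontrivial cylinder, Condition (B) gives $\ind(v)\geq 1$ and Proposition \ref{cyl} yields $\ind(u_1)\geq s\geq 2$; when $v$ has $p\geq 2$ negative ends, Proposition \ref{Hurwitztentacles} together with the Riemann-Hurwitz identity $s=kp-k+1+b$ and the forced bound $\ind(u_1)\leq 2-2s$ collapses to $k(3-2p)\geq 2+4b$, which is impossible for $p\geq 2$. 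Hence $s=0$ after all, and $\calb$ is a single plane. The main obstacle is exactly this last subcase: dynamical convexity alone does not prevent a branched cover from having highly negative index, so the Riemann-Hurwitz input encoded in Propositions \ref{cyl} and \ref{Hurwitztentacles} is essential to close the argument.
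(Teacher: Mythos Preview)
Your approach is essentially the same as the paper's. Both argue by induction (you on the number $N$ of nontrivial components, the paper on the number $m$ of levels), and both reduce to bounding the index of the top component $u_1$ from below via the same case analysis using Propositions \ref{Hurwitztentacles}, \ref{cyl}, and \ref{trivcyl}. Your telescoping identity $\ind(\calb)=\czm(\gp)-1$ is exactly the paper's formula (\ref{helpfuless}), so the inequality part is identical; the paper just packages the equality discussion as ``show $\ind(u_1)+2\ell\geq 3$ when $m>1$'' rather than ``derive $\ind(u_1)\leq 2-2s$ and contradict.''

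There is one genuine gap in your case analysis for $s\geq 2$. When $u_1$ is a $k$-fold cover of a somewhere injective curve $v$, you split into ``$v$ is a nontrivial cylinder'' and ``$v$ has $p\geq 2$ negative ends,'' but you omit the case where $v$ is a \emph{trivial} cylinder. This case does occur: a branched cover of a trivial cylinder over $\gamma$ with one positive end at $\gamma^k$ and negative ends at $\gamma^{k_1},\dots,\gamma^{k_s}$ (with $\sum k_i=k$ and $s\geq 2$) is a nontrivial curve and is a legitimate candidate for $u_1$. The fix is immediate: Proposition \ref{trivcyl} gives $\ind(u_1)\geq 0$, which contradicts $\ind(u_1)\leq 2-2s\leq -2$. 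But as written your dichotomy is not exhaustive, and this is precisely the case the paper handles separately.
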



\begin{proof}
This proof will be done via induction on the number of levels of $\calb:=(u_1,...,u_m)$, where the $u_i$ are levels of $\calb$ in decreasing order, e.g. $u_1$ is the top level.  For any $\calb$ with one positive end asymptotic to the Reeb orbit $\gp$ and no negative ends, 
\begin{equation}\label{helpfuless}
\ind(\calb) = \czm(\gp) -1.
\end{equation}

If $\calb$ consists of only one level then we are done since $\czm(\gp)\geq3$ by the dynamically convex hypothesis.

Suppose $m>1$ and that Lemma is true for buildings of height $m-1$.  We need to show that $\ind(\calb) > 2$. The building $(u_2,...u_m)$ is the disjoint union of $\ell$ genus 0 buildings, each having one positive end at each of the negative ends of $u_1$ and no negative ends.  By the inductive hypothesis we have
\[
\ind(\calb) \geq \ind(u_1) + 2\ell.
\]
Thus we must show that
\begin{equation}\label{lemma1ineq}
\ind(u_1) + 2\ell \geq 3.  
\end{equation}
If $u_1$ is somewhere injective then either $\ind(u_1) \geq 2$ or $u_1$ is immersed\footnote{Any somewhere injective curve $\calc$ of index $\leq 1$ is automatically immersed from Proposition \ref{Z} thus by Corollary \ref{bigdim} we know that $\ind(\calc) \geq 1$, since $\Jt$ has been chosen generically}.

If $u_1$ is a branched cover of a trivial cylinder then Proposition \ref{trivcyl} yields $\ind(u_1) \geq 0$ with $\ell >1$, thus $\ind(u_1) + 2\ell \geq 4$.

If $u_1$ is a cover of a nontrivial cylinder with $\ell$ negative punctures then Proposition \ref{cyl} yields $\ind(u_1) \geq \ell$; thus $\ind(u_1) + 2\ell \geq 3$.

If $u_1$ is a $k$-fold cover  of a somewhere injective curve $\calc \in \calm(\gp;\ga_0,...\ga_s)$ with $s\geq 1$ and $b$ branch points\footnote{Note $b$ could be 0 since the result holds if $u_1$ doesn't have any branch points.} counted with multiplicity then Proposition \ref{Hurwitztentacles} yields
\[
\ind(u_1) \geq 2-k + 2b,
\]
with $\ell=1+ ks +b$, the number of negative punctures.  Note that $\ell \geq 2,$ as $b \geq 1$.    Thus $\ind(u_1) + 2\ell\geq 4+4b+k(2s-1) > 2$.

Thus in all cases (\ref{lemma1ineq}) is satisfied.
\end{proof}

Building upon this theme we continue with the following lemma.
\begin{lemma}\label{lemma2}
Let $\calb:=(u_1,...u_m)$ be a  genus 0 building with 1 positive end and 1 negative end associated to the symplectization of a contact 3-manifold equipped with a nondegenerate dynamically convex contact form and a generic $\A$-compatible $\Jt$. Then 
\[
{\mbox{\em ind}}(\calb) \geq 1,
\]
and if equality holds, $\calb$ consists solely of one cylinder.
\end{lemma}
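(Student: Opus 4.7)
The argument parallels the inductive structure of Lemma \ref{lemma1}, with the base case carried by Proposition \ref{genericJgood} and the inductive step by a case analysis of the top level, combined with Lemma \ref{lemma1} applied to the ``plane subbuildings'' that hang off the extra negative ends of that top level. The goal is to show that for any height $m \geq 1$ genus zero building $\calb=(u_1,\dots,u_m)$ with one positive end and one negative end, $\ind(\calb)\geq 1$, with equality only when $m=1$ and $\calb$ is a single cylinder.

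For the base case $m=1$, the building is a height-1 building with one positive and one negative puncture, and by Remark \ref{nontrivbuilding} contains a nontrivial component; since every component must carry a positive puncture (by the maximum principle, i.e.\ Proposition \ref{max-principle}, together with Proposition \ref{constant}) the building is a single nontrivial cylinder, so Proposition \ref{genericJgood} gives $\ind(\calb)\geq 1$, with equality exactly when $\calb$ is an index 1 cylinder.

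For the inductive step I would fix $m\geq 2$ and assume the statement for all heights $< m$. Let $\ell\geq 1$ denote the number of negative ends of the top level $u_1$. The building below $u_1$ decomposes as a disjoint union of $\ell$ subbuildings, of which exactly one inherits the bottom negative end of $\calb$ (a cylinder-type subbuilding) and the remaining $\ell-1$ are plane-type subbuildings. Applying the inductive hypothesis to the cylinder-type piece (which contributes index $\geq 1$) and Lemma \ref{lemma1} to each of the $\ell-1$ plane-type pieces (each contributing index $\geq 2$) yields
\[
\ind(\calb)\;\geq\;\ind(u_1)+2\ell-1.
\]
Thus it suffices to establish $\ind(u_1)+2\ell\geq 2$ in all cases. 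I would case-analyze $u_1$ exactly as in Lemma \ref{lemma1}: (i) if $u_1$ is somewhere injective, then $\ind(u_1)\geq 1$ by Corollary \ref{bigdim} (using Proposition \ref{Z} when $\ind(u_1)\leq 1$ to invoke \cite{HWZ3} via immersedness), so $\ind(u_1)+2\ell\geq 3$; (ii) if $u_1$ is a branched cover of a trivial cylinder, Proposition \ref{trivcyl} combined with the nontriviality assumption of Remark \ref{nontrivbuilding} forces $\ell\geq 2$ and $\ind(u_1)\geq 0$, giving $\ind(u_1)+2\ell\geq 4$; (iii) if $u_1$ is a cover of a nontrivial cylinder with $\ell=n$ negative punctures, Proposition \ref{cyl} gives $\ind(u_1)\geq n$ (taking the worst of the three subcases), whence $\ind(u_1)+2\ell\geq 3n\geq 3$; (iv) if $u_1$ is a cover of a somewhere injective curve with $s\geq 1$ negative ends, Proposition \ref{Hurwitztentacles} gives $\ind(u_1)\geq 2-k+2b$ and $\ell=1+ks+b$, so $\ind(u_1)+2\ell\geq 4+4b+k(2s-1)\geq 5$.

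The equality case is then immediate from the case analysis: $\ind(\calb)=1$ forces $\ind(u_1)+2\ell=2$, but cases (ii)--(iv) all give strictly larger values, and in case (i), $\ell\geq 1$ together with $\ind(u_1)\geq 1$ also forces $\ind(u_1)+2\ell\geq 3$; so the inductive step admits no equality, and $\ind(\calb)=1$ is only possible in the base case $m=1$ where $\calb$ is a single cylinder. The only mild subtlety, and the place where I would be most careful, is the book-keeping for $u_1$ having multiple connected components: here I would observe that each component must carry at least one positive puncture, so having only one positive end in total forces $u_1$ to be connected, and the counts above are genuinely tight.
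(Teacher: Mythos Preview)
Your proof is correct and follows essentially the same inductive architecture as the paper's own proof: induct on the height $m$, split the sub-building below $u_1$ into one cylinder-type piece $\calb_1$ and $\ell-1$ plane-type pieces, feed the latter into Lemma \ref{lemma1}, and case-analyze $u_1$ via Propositions \ref{genericJgood}, \ref{Hurwitztentacles}, \ref{cyl}, and \ref{trivcyl}. The paper organizes the target inequality as $\ind(u_1)+2(\ell-1)\geq 2$ (treating the $u_1$-is-a-cylinder case separately), whereas you fold that case into the general analysis and phrase the target as $\ind(u_1)+2\ell\geq 2$; the case bounds you obtain ($\geq 3,4,3n,5$) match the paper's and in fact are what drive the strict inequality needed for the equality clause.

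One small point worth tightening: when you invoke the inductive hypothesis on $\calb_1$ to get $\ind(\calb_1)\geq 1$, you should note that this requires $\calb_1$ (after collapsing any trivial levels) to still contain a nontrivial component. When $\ell\geq 2$ the nontrivial component guaranteed by Remark \ref{nontrivbuilding} in each level $u_i$ ($i\geq 2$) could sit entirely inside the plane-type subbuildings, so $\calb_1$ might be a stack of trivial cylinders with $\ind(\calb_1)=0$. The paper handles this by keeping $\ind(\calb_1)$ as a literal summand and proving $\ind(u_1)+2(\ell-1)\geq 2$; your case analysis still goes through with the weaker bound $\ind(\calb_1)\geq 0$ since you actually establish $\ind(u_1)+2\ell\geq 3$ (and when $\ell=1$ the sub-building $\calb_1$ is forced to be nontrivial by Remark \ref{nontrivbuilding}, recovering $\ind(\calb_1)\geq 1$). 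This is a bookkeeping wrinkle rather than a genuine gap, and the paper is comparably loose about it.
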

\begin{proof}
As before the proof will be done via induction on the number of levels of the building $\calb:=(u_1,...,u_m)$, where the $u_i$ are levels of $\calb$ in decreasing order, e.g. $u_1$ is the top level.  If $\calb$ consists of only one level then the proof is complete by Theorem \ref{conditions} in light of Remark \ref{nontrivbuilding}.

Suppose there is more than one level.   Call the top level $u_1$ and assume that the lemma is true for buildings of height $m-1$.  We need to show that $\ind(\calb) > 1$. The building $(u_2,...u_m)$ is the disjoint union of $\ell$ genus 0 buildings, each consisting of one positive end at each of the negative ends of $u_1$ and all but one, say $\calb_1$ having no negative ends.  This exceptional building, $\calb_1$, has one positive and one negative end.  By the inductive hypothesis and Lemma \ref{lemma1} we have
\[
\ind(\calb) \geq \ind(u_1) + \ind(\calb_1) + 2(\ell-1).
\]
Thus we must show that
\begin{equation}\label{lemma2ineq}
\ind(u_1) + 2(\ell-1) \geq 2,  
\end{equation}
provided $u_1$ is not itself a cylinder.   If $u_1$ is itself a cylinder then $\ind(u_1) \geq 1$ by Proposition \ref{genericJgood}.  Since there are no nontrivial cylinders of index 0, and because $\calb_1$ is not allowed to consist of a sequence of trivial cylinders the lemma holds.

If $u_1$ is the cover of a somewhere injective curve $\calc \in \calm(\gp;\ga_0,...\ga_s)$, where $s\geq 1$ then  by Lemma \ref{Hurwitztentacles}
\[
\ind(u_1) \geq 2-k + 2b,
\]
with $\ell = 1+ks+b$. Thus $\ind(u_1) + 2(\ell-1)  \geq 2 + k(2s-1) + 4b \geq 2$.

If $u_1$ is the cover of a nontrivial cylinder then Proposition \ref{cyl} yields $\ind(u_1) \geq n$ with $\ell =n$.   If $n \geq 2$ then $\ind(u_1) + 2(\ell-1) \geq 3n-1 \geq 2$.  Note if $n=1$ then $u_1$ is itself a cylinder, and this case has already been covered.

If $u_1$ is the cover of a trivial cylinder then Proposition \ref{trivcyl} yields $\ind(u_1)\geq 0$.  Since $u_1$ cannot consist solely of a trivial cylinder, $\ell=b\geq2$, thus $\ind(u_1) + 2(\ell-1) \geq 2$.  

Thus in all cases (\ref{lemma2ineq}) is satisfied.
\end{proof}

Putting the above two Lemmas together we obtain the following result for buildings in symplectizations of nondegenerate dynamically convex contact manifolds.
\begin{lemma}\label{nontrivialtentacles}
Let $\calb$ be a genus 0 building with 1 positive end and 1 negative end associated to the symplectization of a contact manifold equipped with a nondegenerate dynamically convex contact form and generic $J$.  If
\[
{\mbox{\em ind}}(\calb) =2,
\]
then $\calb$ is one of the following types,
\begin{enumerate}
\item[\em {(i)}] An unbroken cylinder of index 2;
\item[\em {(ii)}] A once broken cylinder given by a pair of cylinders, each of index 1, $(\calc_u,\calc_v) \in  \mhat(x, y) \times \mhat(y, z)$, where $\czm(y)=\czm(x)-1$;
\item[\em {(iii)}] A pair of pants in $\calm(\ga^{k_1+k_2}; \ga^{k_1}, \ga^{k_2})$ of index 0 and a holomorphic plane in $\calm(\ga^{k_i}; \emptyset)$ of index 2, for either $i=1$ or $i=2$.
\end{enumerate}
\end{lemma}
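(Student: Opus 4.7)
The plan is to induct on the height $m$ of the building $\calb = (u_1, \ldots, u_m)$, using Lemmas \ref{lemma1} and \ref{lemma2} as the main inductive hypotheses. For the base case $m = 1$, the nontrivial-level hypothesis (Remark \ref{nontrivbuilding}) forces $\calb$ to be a single nontrivial cylinder; since $\ind(\calb) = 2$, the corollary to Proposition \ref{genericJgood} supplies regularity, and this is case (i).

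For the inductive step with $m \geq 2$, I follow the sub-building decomposition used in the proof of Lemma \ref{lemma2}. Let $u_1$ denote the top level, with $\ell$ negative punctures. The levels $(u_2, \ldots, u_m)$ partition into $\ell$ genus-$0$ sub-buildings, one per negative puncture of $u_1$; exactly one of them, call it $\calb_1$, carries the overall negative puncture $\gm$ of $\calb$, while the remaining $\ell - 1$ have no negative end and are therefore planar. By Lemma \ref{lemma1} each planar sub-building has index $\geq 2$, and by Lemma \ref{lemma2} one has $\ind(\calb_1) \geq 1$ when $\calb_1$ is nontrivial (with equality forcing $\calb_1$ to be a single cylinder); in the degenerate case where $\calb_1$ consists solely of trivial-cylinder levels, so that $\gm$ is effectively identified with a negative puncture of $u_1$, one has $\ind(\calb_1) = 0$. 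Index additivity gives
\[
\ind(\calb) \;\geq\; \ind(u_1) + 2(\ell - 1) + \ind(\calb_1).
\]

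I then case-split on $\ell$. For $\ell = 1$, the top level $u_1$ is itself a nontrivial cylinder with $\ind(u_1) \geq 1$ by Proposition \ref{genericJgood}; equality $\ind(\calb) = 2$ forces $\ind(u_1) = \ind(\calb_1) = 1$, and the equality clause of Lemma \ref{lemma2} then makes $\calb_1$ a single cylinder of index $1$, yielding case (ii). For $\ell = 2$, the single plane contributes $\geq 2$, so $\ind(\calb) = 2$ forces $\ind(u_1) = 0$, $\ind(\calb_1) = 0$, and a capping plane of index exactly $2$; reviewing Propositions \ref{Hurwitztentacles}, \ref{cyl}, and \ref{trivcyl}, the only way a nontrivial $u_1$ with $\ell = 2$ can achieve $\ind(u_1) = 0$ is as a branched cover of a trivial cylinder in $\calm(\ga^{k_1+k_2}; \ga^{k_1}, \ga^{k_2})$, producing case (iii). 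For $\ell \geq 3$, the plane contribution alone is $\geq 2(\ell-1) \geq 4$; combined with the uniform lower bound $\ind(u_1) + \ind(\calb_1) \geq 0$ extracted from the covering cases of Propositions \ref{Hurwitztentacles}--\ref{trivcyl}, this forces $\ind(\calb) \geq 4$, contradicting $\ind(\calb) = 2$.

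The main obstacle is the delicate treatment of $\calb_1$ when it degenerates to a trivial-cylinder descent of $\gm$: Lemma \ref{lemma2} as stated is set up under the nontrivial-level hypothesis of Remark \ref{nontrivbuilding}, yet case (iii) genuinely requires admitting $\ind(\calb_1) = 0$ and identifying $\gm$ directly with a negative end of $u_1$. To handle this cleanly I would either extend the sub-building decomposition to explicitly allow a trivial-cylinder descent, or equivalently verify case (iii) by a direct Conley-Zehnder computation using the elliptic iteration formula of Proposition \ref{CZs} and the superadditivity of the floor function, confirming that the only $\ell = 2$, $\ind(u_1) = 0$ configuration is the branched pair of pants with the specified positive and negative ends.
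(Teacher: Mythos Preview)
Your approach is essentially the paper's own: both arguments rest on the equality cases of Lemmas~\ref{lemma1} and~\ref{lemma2} together with additivity of the index across levels, and the paper's proof simply cites those two lemmas rather than unpacking the induction on height and the case-split on $\ell$ as you do. Your treatment of $\ell=1$ and $\ell=2$ is correct, and your last paragraph rightly flags the convention issue that in case~(iii) the sub-building $\calb_1$ below the surviving negative puncture must be allowed to be a trivial cylinder (equivalently, empty), so that $\ind(\calb_1)=0$ is admissible there even though Lemma~\ref{lemma2} is stated under the nontrivial-level hypothesis of Remark~\ref{nontrivbuilding}.

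There is one slip in the $\ell\ge 3$ case: the asserted ``uniform lower bound $\ind(u_1)+\ind(\calb_1)\ge 0$'' does not follow from Proposition~\ref{Hurwitztentacles}, which only yields $\ind(u_1)\ge 2-k+2b$, and this is negative once $k>2+2b$ (e.g.\ a $3$-fold unbranched cover of a somewhere injective pair of pants has $\ind(u_1)\ge -1$). The conclusion is still correct, but the fix is the one already carried out inside the proof of Lemma~\ref{lemma2}: bundle $\ind(u_1)$ with the plane contribution $2(\ell-1)$ and use $\ell=1+ks+b$ to obtain $\ind(u_1)+2(\ell-1)\ge 2+k(2s-1)+4b\ge 3$ whenever $s\ge 1$, while the nontrivial-cylinder and trivial-cylinder cases give $\ind(u_1)+2(\ell-1)\ge 3\ell-2$ and $\ge 2(\ell-1)$ respectively. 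In every case this exceeds $2$ once $\ell\ge 3$, and adding $\ind(\calb_1)\ge 0$ finishes the argument.
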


\begin{figure}[ht]
\begin{minipage}[b]{0.32\linewidth}
\centering
 \includegraphics[width=.45\linewidth]{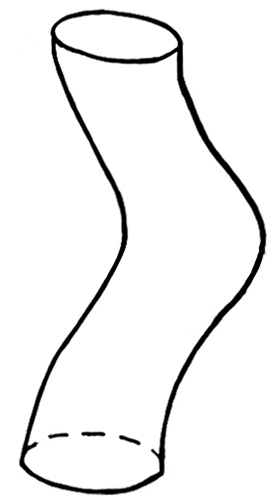}
\caption{$\calb \mbox{ of type (i)}$}
\label{f1}
\end{minipage}
\begin{minipage}[b]{0.32\linewidth}
\centering
\includegraphics[width=.65\linewidth]{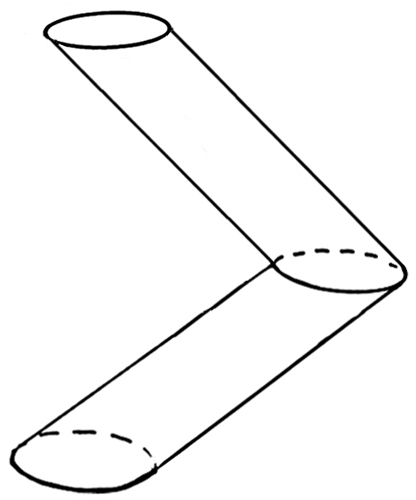}
\caption{$\calb \mbox{ of type (ii) }$}
\label{f2}
\end{minipage}
\begin{minipage}[b]{0.32\linewidth}
\centering
\includegraphics[width=.7\linewidth]{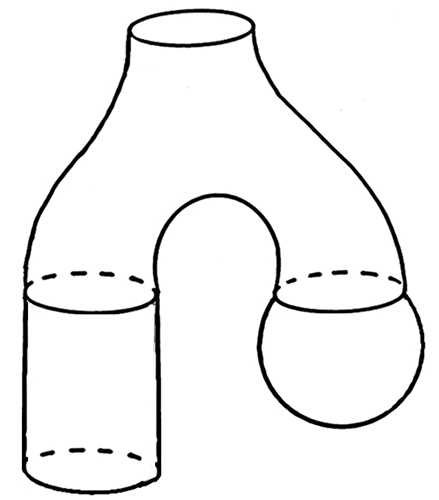}
\caption{$\calb \mbox{ of type (iii) }$}
\label{f3}
\end{minipage}
\end{figure}

\begin{remark}\em
$\calb$ of type (iii) is possible when $\ga$ is elliptic; see Example \ref{ellipsoid}.  
\end{remark}

\begin{proof}
From the numerics of Lemma \ref{lemma1} we saw that $\ind(\calb)=2$ if $\calb$ consists of one pseudoholomorphic plane of index 2. From Lemma \ref{lemma2} we saw that $\ind(\calb)=1$ if $\calb$ consists solely of an index 1 cylinder.  We also saw in Lemma \ref{lemma2} that  $\ind(\calb) =2$ whenever $\widetilde{u} \in \calm(\ga^{k_1+k_2};\ga^{k_1}, \ga^{k_2})$, with $\ind(\widetilde{u})=0$ and $\czm(\ga^{k_i})=3$ for either $i=1$ or $i=2$.  Since the index of a building is additive with respect to its components the results of Lemmas \ref{lemma1} and \ref{lemma2} imply that the only possible configurations for a building $\calb$ of index 2 are those described in (i)-(iii).
 \end{proof}

The final piece in completing the proof of Theorem \ref{conditions2} is to consider the virtual dimension of branched covers of trivial cylinders in the dynamically separated case.


\begin{lemma}\label{done}
Let $\calb$ be a genus 0 building with 1 positive end and 1 negative end associated to a nondgenerate dynamically separated contact form, with 
\[
{\mbox{\em ind}}(\calb) =2.
\]
Then $\calb$ is one of the following types,
\begin{enumerate}
\item[\em {(i)}] An unbroken cylinder of index 2
\item[\em {(ii)}] A once broken cylinder given by a pair of cylinders, each of index 1, $(\calc_u,\calc_v) \in  \mhat(x, y) \times \mhat(y, z)$, where $\czm(y)=\czm(x)-1$.
\end{enumerate}

\end{lemma}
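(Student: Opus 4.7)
The approach is to invoke Lemma \ref{nontrivialtentacles}, which already classifies index-$2$ genus-zero buildings in the broader dynamically convex setting, and to rule out its third configuration using the extra strength of the dynamically separated hypothesis. Since a dynamically separated form is in particular dynamically convex, the three possibilities listed in Lemma \ref{nontrivialtentacles} are the only ones that can occur for $\calb$, and cases (i) and (ii) of that lemma are precisely the outcomes asserted here. Thus the entire task reduces to excluding case (iii): a pair of pants in $\calm(\ga^{k_1+k_2};\ga^{k_1},\ga^{k_2})$ of virtual dimension $0$ together with a plane in $\calm(\ga^{k_i};\emptyset)$ of virtual dimension $2$, for some $i\in\{1,2\}$.

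Plugging $n=2$ into \eqref{vdimintro} translates these dimension conditions into the numerical identities $\czm(\ga^{k_i})=3$ and $\czm(\ga^{k_1+k_2})=\czm(\ga^{k_1})+\czm(\ga^{k_2})-1$. Because the plane exists, $\ga^{k_i}$ must be contractible. Conditions (I.i) and (II) of Definition \ref{taut} force the contractible iterates of $\ga$ to have Conley-Zehnder indices in the arithmetic progression $\czm(\ga^{k_1(0,\ga)})+4\Z_{\geq 0}$, where $\czm(\ga^{k_1(0,\ga)})\in\{3,4,5\}$; the value $3$ is therefore achieved only when $\czm(\ga^{k_1(0,\ga)})=3$ and $k_i=k_1(0,\ga)$. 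Denote this smallest contractible iterate by $N$. After relabeling so that $i=1$, we have $k_1=N$ and $\czm(\ga^{k_1})=3$, and the pair-of-pants identity reduces to
\[
\czm(\ga^{k_1+k_2})-\czm(\ga^{k_2})=2.
\]

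The contradiction comes from free-homotopy bookkeeping. Since $\ga^{N}$ is contractible, it represents the identity element of $\pi_1(M)$, so $\ga^{k_1+k_2}=\ga^{N}\cdot\ga^{k_2}=\ga^{k_2}$ as elements of $\pi_1(M)$; in particular the Reeb orbits $\ga^{k_2}$ and $\ga^{k_1+k_2}$ lie in the same free homotopy class $\baar\in\pi_0(\Omega M)$. Condition (II) of Definition \ref{taut}, applied inside class $\baar$, forces the $\czm$-values of the successive iterates of $\ga$ lying in $\baar$ to form an arithmetic progression of common difference $4$. Since $k_1+k_2>k_2$, this gives $\czm(\ga^{k_1+k_2})-\czm(\ga^{k_2})\in 4\Z_{>0}$, contradicting the value $2$ obtained above. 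Thus case (iii) cannot occur and the lemma follows.

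The main subtlety is the free-homotopy identification of $\ga^{k_2}$ with $\ga^{N+k_2}$, which is exactly where dynamical separation improves over dynamical convexity: under mere dynamical convexity one lacks the step-$4$ growth of $\czm$ within a free homotopy class and cannot rule out case (iii) by numerics alone, forcing the intersection-theoretic detours of \cite{HNdyn}.
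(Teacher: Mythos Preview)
Your proof is correct and follows essentially the same route as the paper: reduce via Lemma \ref{nontrivialtentacles} to excluding configuration (iii), observe that the orbit capped by the plane is contractible so the other two iterates lie in the same free homotopy class, and then invoke the step-$4$ growth of $\czm$ from Definition \ref{taut} to reach a contradiction. The only cosmetic differences are that the paper labels the contractible orbit $\ga^{k_2}$ rather than $\ga^{k_1}$, and phrases the contradiction as $\ind(\widetilde{u})\geq 2$ rather than as $\czm(\ga^{k_1+k_2})-\czm(\ga^{k_2})=2\notin 4\Z_{>0}$; your extra step identifying $k_i$ as the minimal contractible iterate $k_1(0,\ga)$ is harmless but not needed.
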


\begin{proof}
To prove this lemma we need to exclude item (iii) of Lemma \ref{nontrivialtentacles}.  Because we are working with dynamically separated contact forms we will show for any $\widetilde{u} \in \mhat(\ga^{k_1+k_2}; \ga^{k_1}, \ga^{k_2})$ that $\ind(\widetilde{u}) \geq 2$ thus $\ind(\calb) \geq 5$.  Without loss of generality take $\ga^{k_2}$ to be the contractible orbit, thus $\ga^{k_1+k_2}$ and $\ga^{k_1}$ lie in the same free homotopy class. By the dynamically separated condition we have
\begin{equation}\label{dynsepwoo}
\czm(\ga^{k_1+k_2})-\czm(\ga^{k_1}) \geq 4.
\end{equation}
Thus for $\widetilde{u} \in \mhat(\ga^{k_1+k_2}; \ga^{k_1}, \ga^{k_2})$ we obtain
\begin{equation}\label{dynsepwoohoo}
\begin{array}{lcl}
\ind(\widetilde{u}) &=& 1 + \czm(\ga^{k_1+k_2}) - \czm(\ga^{k_1}) -\czm(\ga^{k_2}) \\
&\geq& 5 - \czm(\ga^{k_2}) \\
&\geq& 2. \\
\end{array}
\end{equation}
As $\ind(\calb)=\ind(\widetilde{u}) + \czm(\ga^{k-1}) \geq 5$ we obtain a contradiction.
\end{proof}

\begin{remark} \em
The proof of Lemma \ref{done} does not require the full definition of dynamically separated, that for all $i$, $\czm(\ga^{k_{i+1}(\baar)})=\czm(\ga^{k_{i}(\baar)})+4$.  However, this stipulation is needed to prove invariance and appears in \cite{jocompute}. 
\end{remark}

\noindent \textsc{Jo Nelson \\  Columbia University and the Institute for Advanced Study}\\
{\em email: }\texttt{nelson@math.columbia.edu}\\




\end{document}